\definecolor{mygreen}{HTML}{43a047}
\definecolor{darkcyan}{rgb}{0.0, 0.5, 0.5}
\newcommand{\corrBK}[1]{{#1}}
\def\kersig{k_\sigma}
\def\kertrsig{k_{\tr\,\sigma}}
\def\kereps{k_\epsi}
\def\kertreps{k_{\treps}}
\def\KerTen{\mathbb{D}}
\def\bfy{\mathbf{y}}
\def\bfz{\mathbf{z}}
\def\bff{\mathbf{f}}
\def\bfg{\mathbf{g}}
\def\bfh{\mathbf{h}}
\def\bfp{\mathbf{p}}
\def\bfq{\mathbf{q}}
\def\bfu{\mathbf{u}}
\def\bfv{\mathbf{v}}
\def\bfw{\mathbf{w}}
\def\bfxi{\bm{\xi}}
\def\bfchi{\bm{\chi}}
\def\bfphi{\boldsymbol{\phi}}
\def\bfsigma{\boldsymbol{\sigma}}
\def\bfH{\boldsymbol{H}}
\def\bfU{\boldsymbol{U}}
\def\bfP{\boldsymbol{P}}
\def\bbA{\mathbb{A}} 
\def\bbC{\mathbb{C}}
\def\bbI{\mathbb{I}}
\def\R{\mathbb{R}}
\def\olbfp{\overline{\mathbf{p}}}
\def\GD{\Gamma_{\textup{D}}}
\def\GN{\Gamma_{\textup{N}}}
\def\div{\textup{div}}
\def\tr{\textup{tr}}
\def\epsi{\mathbf{\varepsilon}}
\def\treps{\textup{tr}\,\epsi}
\def\epsiu{\mathbf{\varepsilon(\mathbf{u})}}
\def\hat{\widehat}
\newcommand{\LT}[1]{\reallywidehat{#1}}
\newcommand{\ILT}[1]{\mathcal{L}^{-1}\left(#1\right)}
\newcommand{\lt}{\omega}
\newcommand{\ddt}{\frac{\textup{d}}{\textup{d}t}}
\newcommand{\dt}{\, \textup{d} t}
\newcommand{\ds}{\, \textup{d} s }
\newcommand{\dr}{\, \textup{d} r }
\newcommand{\dx}{\, \textup{d} x}
\newcommand{\dxs}{\, \textup{d}x\textup{d}s}
\newcommand{\dxt}{\, \textup{d}x\textup{d}t}
\newcommand{\dsr}{\, \textup{d}s\textup{d}r}
\newcommand{\intO}{\int_{\Omega}}
\newtheorem{lemma}{Lemma}
\newtheorem{proposition}{Proposition}
\newtheorem*{assumption*}{Assumptions}
\newtheorem{remark}{Remark}
\numberwithin{lemma}{section}
\numberwithin{proposition}{section}
\numberwithin{theorem}{section}
\numberwithin{equation}{section}
\newcommand{\leqnomode}{\tagsleft@true}
\newcommand{\reqnomode}{\tagsleft@false}
\newcommand\reallywidehat[1]{%
	\savestack{\tmpbox}{\stretchto{%
			\scaleto{%
				\scalerel*[\widthof{\ensuremath{#1}}]{\kern-.6pt\bigwedge\kern-.6pt}%
				{\rule[-\textheight/2]{1ex}{\textheight}}
			}{\textheight}%
		}{0.5ex}}%
	\stackon[1pt]{#1}{\tmpbox}%
}
\definecolor{grey}{rgb}{0.5,0.5,0.5}
\title[Determining kernels in linear viscoelasticity]{Determining kernels in linear viscoelasticity}
\subjclass[2010]{74B99, 35R30, 65J20}
\keywords{viscoelasticity, weakly singular kernels, inverse problem}
\author[B.\ Kaltenbacher, U.\ Khristenko, V.\ Nikoli\'c, M.\ L.\ Rajendran, and B.\ Wohlmuth]{\small Barbara Kaltenbacher, Ustim Khristenko, Vanja Nikoli\'c, Mabel Lizzy Rajendran, and Barbara Wohlmuth}
\address{  \small
	Department of Mathematics, 
	Alpen-Adria-Universit\"at Klagenfurt 
	\\ Universit\"atsstra\ss e 65--67, A-9020 Klagenfurt, Austria}
\email{barbara.kaltenbacher@aau.at}
\address{\small 
	Department of Mathematics, 
	Technical University of Munich   \\ 
	Boltzmannstra\ss e 3, 	
	85748 Garching, Germany}
\email{khristen@ma.tum.de} 
\address{\small Department of Mathematics, Radboud University   \\ 
	Heyendaalseweg 135,
	6525 AJ Nijmegen, The Netherlands}
\email{vanja.nikolic@ru.nl} 
\address{\small 
	Department of Mathematics, 
	Technical University of Munich   \\ 
	Boltzmannstra\ss e 3, 	
	85748 Garching, Germany}
\email{rajendrm@ma.tum.de} 
\address{\small 
	Department of Mathematics, 
	Technical University of Munich   \\ 
	Boltzmannstra\ss e 3, 	
	85748 Garching, Germany}
\email{wohlmuth@ma.tum.de} 
\begin{document}
\vspace*{8mm}
\begin{abstract}  
In  this  work,  we investigate  the  inverse  problem  of  determining  the  kernel  functions  that  best  describe  the mechanical  behavior of a complex medium modeled by a general nonlocal viscoelastic wave equation. To this end, we minimize a tracking-type data misfit function under this PDE constraint. We perform the well-posedness analysis of the state and adjoint problems and, using these results, rigorously derive the first-order sensitivities. Numerical experiments in a three-dimensional setting illustrate the method.   
\end{abstract}
\vspace*{-7mm}
\maketitle           
    \section{Introduction}
\par  When elastic waves propagate through complex media such as biological tissues, their amplitude is attenuated according to a frequency-dependent law. In recent years, it has become evident that these attenuation laws are more complicated than initially thought and that nonlocal wave equations are needed to model such behavior. Indeed, elastic wave equations with weakly singular kernels arise in various important medical and industrial applications of wave propagation in complex media. For example, frequency power laws with powers between zero and two are often encountered in shear-wave elastography; see~\cite{sinkus2007mr,holm2014comparison,klatt2007noninvasive}. \\ 
 \indent In  this  work,  we tackle  the  inverse  problem  of  determining  the  kernel  functions  that  best  describe  the mechanical  behavior of a complex medium from over-specified data. To this end, we first derive and analyze a general nonlocal elastic wave model containing three kernels, cf. \eqref{constitutive:3kernels} below.
By taking the Laplace transform, it can be seen that, in general, not all three kernels can be uniquely determined independently of each other. Therefore, we consider the inverse problem of determining two of these kernels (cf. \eqref{constitutive:3kernels} below) from over-specified data and study the PDE-constrained optimization problem resulting from the (optionally regularized) minimization of the data misfit. Using an adjoint-based calculation, we then rigorously derive the first-order sensitivities and develop a numerical kernel recovery method.\\
 \indent We note that some work in the direction of kernel identification can be found in the literature; in particular, we refer to~\cite{buterin2006inverse, durdiev2020problem,janno1997inverse,SlodickaSeliga2017, von2008specific} and the references contained therein. For results with Abel integral kernels, that is, models involving time-fractional derivatives, we refer to, e.g., \cite{HatanoNakagawaWangYamamoto:2013,JinKian:2021, LiYamamoto:2015,LiZhangJiaYamamoto:2013,RundellZhang:2017}.
 \par The remaining of our exposition is organized as follows. In Section~\ref{Sec:modelling}, we derive a general nonlocal elastodynamic wave equation and discuss the setup of the inverse problem. Section~\ref{Sec:forward_analysis} is concerned with the well-posedness analysis of the state problem. In Section~\ref{Sec:Adjoint}, we derive and analyze the corresponding adjoint problem. Section~\ref{Sec:Inverse} is focused on the estimation of the kernels from additional observations. Finally, in Section~\ref{Sec:numerics}, we present numerical experiments that illustrate our theoretical results.

\section{Modeling and problem setup}\label{Sec:modelling}
\par In this section, we provide the setup for the problem and derive the viscoelastic equation from the conservation law and the constitutive relation of stress and strain. \\
\indent Consider a bounded domain $\Omega\subset \R^d$, $d \in \{1, 2, 3\}$, with Lipschitz smooth boundary $\partial \Omega$ acted upon by force. Let  $\bfu=\bfu(x,t)$ be the displacement field and $\rho=\rho(x)$ the density. The linearized strain due to the deformation is defined by
$$\epsi(\bfu) = \frac12 \nabla \bfu + \nabla \bfu^T.$$
The balance of momentum in the body is given by 
\begin{equation}\label{conservation:momentum}
\rho\bfu_{tt} = \div\bfsigma+\bff,
\end{equation}
where $\bfsigma$ denotes the stress tensor.
\corrBK{
In this paper we consider the following three- and two-kernel constitutive relations involving hydrostatic and deviatoric parts of the strain $\tr\,\epsi(\bfu)= \sum_{i=1}^d\epsi(\bfu)_{ii}$, $\epsi_d(\bfu)=\epsi(\bfu)-\frac{1}{d}\mathbb{I}\tr\,\epsi(\bfu)$, separately 
\begin{equation}
\begin{aligned}
\bfsigma +(\kersig* \bfsigma)_t =\bbC \epsi(\bfu) + \kereps* \bbA\epsi(\bfu_t) + \kertreps * \bbI\tr\, \epsi(\bfu_t),\label{constitutive:3kernels}
\end{aligned}
\end{equation}
and 
\begin{equation}
\begin{aligned}
\bfsigma = \bbC\epsi(\bfu) + \kereps* \bbA\epsi(\bfu_t) + \kertreps * \bbI\tr\, \epsi(\bfu_t).
\label{constitutive:2kernels}
\end{aligned}
\end{equation}
We point to Remark~\ref{rem:models} for their relation to existing models in the literature.
The operator $*$ denotes the convolution on the positive half-line with respect to the time variable.
}

\subsection{The viscoelastic wave model} The model for the wave propagation in complex media considered in this work is derived by coupling the conservation law with more appropriate constitutive relation, as described above. The two hyperbolic models obtained by coupling \eqref{conservation:momentum} with two forms of constitutive relations, \eqref{constitutive:3kernels} and \eqref{constitutive:2kernels}, are as follows.
\begin{itemize}[leftmargin=0.5cm]
\item Model 1:
\begin{equation}\label{wave:3kernels}
\begin{aligned}
\rho\bfu_{tt}+\rho (\kersig*\bfu_{tt})_t-\div\left[\bbC\epsi(\bfu) + \kereps*\bbA\epsi(\bfu_t) + \kertreps*\bbI\tr\,\epsi(\bfu_t)\right]\\
= \bff + (\kersig* \bff)_t;
\end{aligned}
\end{equation}
\item Model 2:
\begin{equation}\label{wave:2kernels}
\begin{aligned}
\rho\bfu_{tt}-\div[\bbC\epsi(\bfu)+\kereps*\bbA\epsi(\bfu_t)
+\kertreps*\mathbb{I}\textup{tr}\,\epsi(\bfu_t)]=\bff.
\end{aligned} 
\end{equation}
\end{itemize}
We can rewrite both of these equations as
\begin{equation}
\begin{aligned}
\rho\bfz_{tt}-\div\left[\bbC \epsi(\bfz) +\KerTen*\epsi(\bfu_t) \right] =\bfg,
\end{aligned}
\end{equation}
by introducing the auxiliary variable $\bfz$, tensor $\KerTen$, and the forcing function $\bfg$: 
\begin{equation}\label{gf}
\begin{aligned}
\bfz=\bfu+\kersig*\bfu_t,\quad  
\KerTen=\bbA\kereps- \bbC\kersig{+ \bbI \kertreps \tr},\\
\bfg = \bff + (\kersig* \bff)_t
+\rho {\kersig}_t \cdot \bfu_t(0),
\end{aligned}
\end{equation}
where $\kersig=0$ for Model 2. We note that the scalar counterpart of this model has been considered in the literature with fractional kernels; see, for example,~\cite{kaltenbacher2021inverse}. We mention in passing that third-order in time models of viscoelasticity were introduced in~\cite{gorain2010stabilization} and have been recently a topic of extensive research, often referred to as the Moore--Gibson--Thompson viscoelasticity; see, for example,~\cite{pellicer2019optimal, conti2020analyticity} and the references therein.
\corrBK{
\subsection*{Assumptions on the medium parameters, kernels, and tensors} To allow for heterogeneous viscoelastic materials, we assume that $\rho \in L^\infty(\Omega)$ and that there exists $\underline{\rho}>0$, such that
\begin{equation}\label{ass:rho}
\rho(x) \geq \underline{\rho}>0 \qquad \text{a.e.\ in} \ \Omega.
\end{equation}
The fourth-order tensors $\bbC$ and $\bbA$ are assumed to be constant and symmetric,
\begin{equation}\label{ass:CA}
\bbC, \ \bbA \, \in \mbox{Sym}^4(\R^d),
\mbox{ and $\bbC $ is positive definite.} 
\end{equation}
Moreover,} 
we assume that the kernel $\kersig$ and tensor $\KerTen$ 
\corrBK{according to \eqref{gf}}
are such that the following non-negativity conditions hold:
\begin{align}
& {\int_0^t (\KerTen *\bfy)(s) : (\kersig*\bfy)_t(s) \ds \geq 0;} \label{AssumptionKernel1}\\
& \int_0^t (\KerTen *\bfy)(s) : \bfy(s) \ds\geq \underline{\gamma} \|\bfy\|^2_{H^{-\delta}(0,t)} \label{AssumptionKernel2}
\end{align}
for some $\underline{\gamma}>0$, $\delta >0$ 
\corrBK{ 
and all $\bfy\in W^{1,1}(0,T,\mbox{Sym}^4(\R^d))$. 
Here 
\begin{equation}\label{Sym}
\begin{aligned}
\mbox{Sym}^4(\R^d)&=\{\bbC\in \R^{d\times d\times d\times d}\, : 
\bbC_{ijk\ell}=\bbC_{ij\ell k}=\bbC_{jik\ell}=\bbC_{k\ell ij}\}\,,\\
\mbox{Sym}^2(\R^d)&=\{\bfy\in \R^{d\times d}\, : 
\bfy_{ij}=\bfy_{ji}\}\,,
\end{aligned}
\end{equation}
and $H^{-\delta}(0,t)$ is the dual of the Sobolev space $H_0^{\delta}(0,t)$.
}
In the scalar case \[\mathbb{D}(t)= \mathbb{M} m(t)\] with $\mathbb{M}\in \mbox{Sym}^4(\R^d)$ positive semidefinite and using the $\mathbb{M}$ bilinear form \[\langle \bfw,\bfv\rangle=\mathbb{M}\bfw:\bfv\] as well as seminorm, condition \eqref{AssumptionKernel1} follows from the integrated version of Lemma~\ref{lem:Alikhanov1}, provided $\kersig=k*m$ with $k\geq0$ and $k'\leq0$, using $\bfw=m*\bfy$, which satisfies {$\bfw(0)=0$ and assuming $\bfw \in H^{1}(0,T)$.}
In the same setting, assuming strict positivity of $\mathbb{M}$ and $\Re(\mathcal{F}m)(\omega)\geq\gamma(1+\omega^2)^{-\delta/2}$, $\omega\in\mathbb{R}$, 
\corrBK{(where $\Re$ denotes the real part and $\mathcal{F}$ the Fourier transform),} we can conclude 
\corrBK{from Lemma~\ref{lem:coercivityI}}
that
\eqref{AssumptionKernel2} holds.

\begin{remark}\label{rem:models}
\corrBK{
We comment on the relation of \eqref{constitutive:3kernels} and \eqref{constitutive:2kernels} to other models in the literature.
}
Assuming the material is homogeneous and isotropic, a very general relation between deviatoric and hydrostatic parts of stress ($\bfsigma_d,\tr\, \bfsigma$) and strain ($\epsi_d(\bfu), \tr\,\epsi(\bfu)$) is given by 
\begin{subequations}\label{constitutive:4kernels}
\begin{align}
\bfsigma_d + (\kersig * \bfsigma_d)_t &= 2 \mu \epsi_d(\bfu) + \kereps * \epsi_d(\bfu_t)\label{consitutive:4kernels_d}\\
\tr\,\bfsigma + (\kertrsig * \tr\,\bfsigma)_t &= (2\mu + d \lambda)\tr\, \epsi(\bfu)  +\kertreps * \tr\, \epsi(\bfu_t),\label{consitutive:4kernels_s}
\end{align}
\end{subequations}
where $\mu,\lambda$ are the Lam\'e constants and $\kersig,\kertrsig,\kereps,\kertreps$ are four scalar-valued kernels. 
\par Assuming $\epsi(\bfu)(0) = 0$ and using Laplace transform, 
\corrBK{
$$
\begin{aligned}
(1+\omega\widehat{\kersig})\widehat{\bfsigma}_d &= 
(2 \mu  + \omega\widehat{\kereps})\epsi_d(\widehat{\bfu})\\
(1+\omega\widehat{\kertrsig}) \tr\,\widehat{\bfsigma} &= 
(2\mu + d \lambda\,+\widehat{\kertreps}) \tr\, \epsi(\widehat{\bfu}),
\end{aligned}
$$
as well as $\bfsigma=\bfsigma_d+\frac{1}{d}  \bbI  \tr\, \bfsigma$,
}
it is not difficult to see that \eqref{consitutive:4kernels_d} and \eqref{consitutive:4kernels_s} can be reduced to a form with three kernels non-uniquely. One such reduction is with the kernels $(\kersig,\kereps,\kertreps^{(1)})$, where
$$\kertreps^{(1)} = \ILT{\frac{1}{d(1+\lt \LT{\kertrsig})} 
\left[(2\mu+d\lambda)(\LT{\kersig}-\LT{\kertrsig})+\widehat{\kertreps}(1+\lt \LT\kersig)\right]},$$ 
\corrBK{
where skipping the superscript ${}^{(1)}$ we obtain the form \eqref{constitutive:3kernels}
in case of the special choice
\begin{equation}\label{CArem}
\mathbb{C}_{ijk\ell}=\delta_{ik}\delta_{j\ell}(2\mu +\lambda\delta_{ij}\textup{tr}),\quad
\mathbb{A}_{ijk\ell}=\delta_{ik}\delta_{j\ell}.
\end{equation}
}
\\
\indent Using the same technique once again, the consitutive relation \eqref{constitutive:3kernels} can be reduced to a form with only two kernels ($\kereps^{(2)},\kertreps^{(2)}$), where 
\begin{equation}\label{kertil}
\kereps^{(2)}= \ILT{\frac{ \LT{\kereps}-2\mu\LT{\kersig}}{1 +\lt\LT{\kersig}}},\ 
\kertreps^{(2)}=\ILT{\frac{\LT{\kertreps^{(1)}}-\lambda\LT{\kersig}}{1 +\lt\LT{\kersig}}},
\end{equation} 
which we again denote by $\kereps, \kertreps$ and obtain the form 
\eqref{constitutive:2kernels}
Existing works consider one of the above 
\corrBK{three forms \eqref{constitutive:3kernels}, \eqref{constitutive:2kernels}, or \eqref{constitutive:4kernels},} as constituent relation between stress and strain in 3D with either fractional kernels \[g_{\alpha}(t)=\frac{t^{\alpha-1}}{\Gamma(\alpha)}\] or the finite sum of these fractional kernels. The consitutive relation of the form \eqref{constitutive:4kernels} with $\kersig=\kereps,\kertrsig=\kertreps$ is used in, e.g,~\cite{enelund1997time,saedpanah2014well}. The form \eqref{constitutive:3kernels} with $\kersig=\kereps=\kertreps$ is studied in \cite{oparnica2020well}. The two-kernel representation form \eqref{constitutive:2kernels} is employed in \cite{alotta2017behavior} with fractional kernels. The four-kernel form  \eqref{constitutive:4kernels} with the kernels being sum of fractional kernels is considered in  \cite{schmidt2002finite}. 
\end{remark}

\subsection{The inverse problem}
We next discuss the inverse problem of determining the kernels from the measurements of the displacement $\bfu$ on finite number of points on the surface of the boundary. We use Tikhonov regularization, (see, e.g.,  \cite{EKN89,SeidmanVogel89,TikhonovArsenin77}) for recovering the unknown kernels from over-specified data and end up with a PDE-constrained optimization problem:
\begin{equation}\label{minJ}
\min_{\vec{k},\bfu\in X\times \bfU} J(\vec{k},\bfu),
\end{equation}
where $\vec{k}=(\kersig,\kereps,\kertreps)$, such that
\begin{equation}\label{PDEconstr}
\left \{ \begin{aligned}
&\rho\bfz_{tt}-\div\left[\bbC \epsi(\bfz) +\KerTen*\epsi(\bfu_t)\right] =\bfg,\quad && \text{in }\ \Omega \times (0,T),\\
& \bfz=\bfu+\kersig*\bfu_t, \quad && \text{in }\ \Omega \times (0,T),\\
& \bfu =0 \qquad && \text{on }\ \GD \times (0,T), \\
& \bfsigma \cdot n= \bfh && \text{on }\ \GN \times (0,T), \\
& (\bfu, \bfu_t) \vert_{t=0} = (\bfu_0, \bfu_1) && \text{in }\ \Omega,
\end{aligned} \right.
\end{equation}
is satisfied in a weak sense, with $\KerTen$ and $\bfg$ defined in \eqref{gf}. \\
\indent There exists a large variety of optimization applications in the context of viscoelasticity. For this reason, we here consider a general cost function $J$. In Section \ref{Sec:Inverse} below, we will specify some particular choices that can be made in the context of parameter identification problems arising from the estimation of the kernels from additional measurements. 

\subsection{Notation and theoretical preliminaries}\label{SubSec:preliminaries}
Before proceeding with the analysis, we shortly introduce the function spaces and analytical techniques which will be used in the following sections. 

\par We recall that we have assumed $\Omega\subset \R^d, d\in \{1,2,3\}$ to be an open, connected, and bounded set with Lipschitz regular boundary $\partial \Omega$. Furthermore, $\partial \Omega$ is the disjoint union of $\GD$ and $\GN$ 
\corrBK{with $\GD$ nonempty}.\\
\indent  As usual, we equip the Sobolev and Lebesgue spaces $W^{k,p}(\Omega)$ (special case of $p=2$, $H^{k}(\Omega)$) and $L^p(\Omega)$ on $\Omega$ with the norms $\|\cdot\|_{W^{k,p}(\Omega)}$ and $\|\cdot\|_{L^p(\Omega)}$; their vector-valued variants are denoted by $W^{k,p}(\Omega)^d$ and $L^p(\Omega)^d$. \\
\indent For notational brevity, we introduce the Sobolev space that incorporates homogeneous Dirichlet boundary conditions:
\[
\bfH^1_{\textup{D}}(\Omega)=\{\bfw \in H^1(\Omega)^d: \ \bfw=0 \ \text{on} \ \GD\}.
\]
\indent We  often use $x \lesssim y$ to denote $x \leq Cy$, where $C>0$ is a generic constant that may depend on the final time. 
\corrBK{
Throughout the paper $\langle \cdot, \cdot \rangle$ denotes the duality pairing between $(\bfH^1_{\textup{D}}(\Omega))^*$ and $\bfH^1_{\textup{D}}(\Omega)$, and $(\cdot, \cdot)_\Omega$ is the $L^2$-product on $\Omega$ for scalars, vectors and tensors.}
\subsection*{Auxiliary results} We recall the useful Leibniz integral rule, integration by parts, and the transposition identity:
\begin{equation} \label{diffconvol}
\begin{aligned}
(k*\bfw)_t(t)=\ddt\int_0^t k(s)\bfw(t-s)\ds =(k*\bfw_t)(t)+k(t) \bfw(0),\\
\quad k\in L^1(0,T), \quad \bfw\in W^{1,1}(0,T;X)\subseteq C(0,T;X),
\end{aligned}
\end{equation}
\begin{equation} \label{integbyparts}
\begin{aligned}
\int_0^T \bfw_t(t) \bfq(T-t)\dt
=\int_0^T \bfw(t)\bfq_t(T-t)\dt + \bfw(T)\bfq(0) - \bfw(0)\bfq(T),\\
\quad \bfq, \ \bfw\in W^{1,1}(0,T;X),
\end{aligned}
\end{equation}
\begin{equation} \label{integbyparts1}
\begin{aligned}
\int_0^T (k*\bfw)(t)\bfq(T-t)\dt = \int_0^T \bfw(t) (k*\bfq)(T-t)\dt,\\
\quad k\in L^1(0,T), \quad \bfq, \ \bfw\in L^2(0,T;X)
\end{aligned}
\end{equation}
for some Banach space $X$. The last identity is obtained by changing the order of integration as follows:
\begin{equation}
\begin{aligned}
&\int_0^T (k*\bfw)(t)\bfq(T-t)\dt = \int_0^T \int_0^t k(t-s)\bfw(s)\ds\,\bfq(T-t)\dt \\
&=  \int_0^T \bfw(s) \int_s^T k(t-s) \bfq(T-t)\dt\ds = \int_0^T \bfw(s) \int_0^{T-s}k(T-s-r) \bfq(r)\, \textup{d}r\textup{d}s\\
&= \int_0^T \bfw(T-\ell) \int_0^{\ell}k(\ell-r) \bfq(r)\,\textup{d}r \textup{d}\ell
= \int_0^T \bfw(T-\ell) (k*\bfq)(\ell)\, \textup{d}\ell\\
&= \int_0^T \bfw(t) (k*\bfq)(T-t)\dt.
\end{aligned}
\end{equation}

\section{Analysis of a nonlocal viscoelastic equation} \label{Sec:forward_analysis}
In this section, we analyze the state problem \eqref{PDEconstr} associated with our inverse problem, which can be written in terms of $\bfu$ only as follows:
\begin{equation}\label{forward}
\left \{ \begin{aligned}
&\rho\bfu_{tt}+\rho(\kersig*\bfu_{tt})_t &&\\
&\hspace*{6mm}-\div\left[\bbC\epsi(\bfu)+\kereps*\bbA\epsi(\bfu_t) + \kertreps *\tr\,\epsi(\bfu_t)\bbI\right] =\bfg  &&\text{in }\ \Omega \times (0,T),\\
& \bfu =0 \quad && \text{on }\ \GD \times (0,T), \\
& \left[\bbC\epsi(\bfu)+\kereps*\bbA\epsi(\bfu_t) + \kertreps *\tr\,\epsi(\bfu_t)\bbI\right] \cdot n= \bfh +(\kersig*\bfh)_t && \text{on }\ \GN \times (0,T), \\
& (\bfu, \bfu_t) \vert_{t=0} = (\bfu_0, \bfu_1) && \text{in }\ \Omega,
\end{aligned} \right.
\end{equation}
with $\bfg$ defined in \eqref{gf}, where the Neumann boundary condition on $\GN$ results from the traction condition $\bfsigma\cdot n =\bfh$. \\
\indent The well-posedness analysis follows in spirit the arguments in~\cite{oparnica2020well, saedpanah2014well} with the main novelty arises from handling the $\kersig$ term, which is not present in these references. In particular, we need to distinguish two cases in our well-posedness analysis based on whether the kernel $\kersig$ is singular or not. This condition will influence the regularity of $\bfu$.
\begin{proposition}\label{Pro:state:exis_uni1} 
Let assumptions
\corrBK{\eqref{ass:rho} and \eqref{ass:CA} hold.} Given $T>0$, let
$\kersig \in W^{1,1}(0,T)$ with
\[\kersig(t)\geq \underline{k}>-1,\] $\kersig$ monotonically decreasing,
and $\kereps$, $\kertreps \in L^1(0,T)$. Let also 
$\KerTen\in W^{1,1}(0,T;\corrBK{\mbox{Sym}^4(\R^d)})$ and the coercivity assumptions \eqref{AssumptionKernel1}--\eqref{AssumptionKernel2} on the involved kernels hold for all $t \in [0,T]$. Consider problem \eqref{forward} with initial conditions
\[
 (\bfu_0, \bfu_1)\in H^1(\Omega)^d \times L^2(\Omega)^d.
\]
Further, assume that $\bfg \in L^1(0,T; L^2(\Omega)^d)$ and \[\bfh+(\kersig*\bfh)_t \in W^{1,1}(0,T; H^{-1/2}(\GN)^d).\] 
	Then there exists a unique 
	\begin{equation}
	\begin{aligned}
	\bfu \in \bfU =\left\{\bfu \in L^\infty(0,T; \bfH^1_{\textup{D}}(\Omega)): \, \bfu_t \in L^\infty(0,T; L^2(\Omega)^d), 
	\, \rho\bfu_{tt} \in L^1(0,T;(\bfH^1_{\textup{D}}(\Omega))^*
\right\},
	\end{aligned}
	\end{equation}
	such that
	\begin{equation} \label{weak_form}
	\begin{aligned}
\begin{multlined}[t]
\langle \rho \bfu_{tt}+\rho (\kersig*\bfu_{tt})_t, \bfv \rangle+(\bbC \epsiu, \epsi(\bfv))_{\Omega}\\+(\kereps*\bbA \epsi(\bfu_t), \epsi(\bfv))_{\Omega} +(\kertreps*\tr\,\epsi(\bfu_t)\bbI, \epsi(\bfv))_{\Omega} \\=(\bfg, \bfv)_{\Omega}+(\bfh+(\kersig*\bfh)_t, \bfv)_{\GN},
\end{multlined}
	\end{aligned}
	\end{equation}	
for all $\bfv \in \bfH^1_{\textup{D}}(\Omega)^d$, $\bfw \in L^2(\Omega)^d$ a.e.\ in time, with $(\bfu, \bfu_t) \vert_{t=0} = (\bfu_0, \bfu_1)$.	Furthermore, the solution satisfies the estimate
\begin{equation} \label{low_energy_est1}
\begin{aligned}
& \|\bfu_t\|_{L^
	\infty(0,T; L^2(\Omega)^d)}^2 +  \|\bfu\|_{L^\infty(0,T; H^1(\Omega)^d)}^2 
+\underline{\gamma} \|\epsi(\bfu_t)\|^2_{H^{-\delta}(0,t; L^2(\Omega)^{2d})}\\
\lesssim&\, \begin{multlined}[t] \|\bfu_0\|_{H^1(\Omega)^d}^2+\|\bfu_1\|_{L^2(\Omega)^d}^2  + \|\bfg\|_{L^1(0,T;L^2(\Omega)^d)}^2 \\
+\|\bfh+(\kersig*\bfh)_t\|^2_{W^{1,1}(0,t;H^{-1/2}(\GN)^d)}. \end{multlined}
\end{aligned}
\end{equation}
If additionally $\bfu_1=0$ and $\bfg \in L^\infty(0,T; L^2(\Omega)^d)$, then $ \rho\bfu_{tt} \in L^\infty(0,T;(\bfH^1_{\textup{D}}(\Omega))^*$.
\end{proposition}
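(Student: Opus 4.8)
The plan is to construct the solution by a Faedo--Galerkin scheme, to establish \eqref{low_energy_est1} uniformly at the discrete level, to pass to the limit, and finally to bootstrap the time regularity of $\rho\bfu_{tt}$. First I would fix a basis $\{\bfphi_j\}_{j\in\mathbb{N}}$ of $\bfH^1_{\textup{D}}(\Omega)$ (conveniently the eigenfunctions of $-\div(\bbC\epsi(\cdot))$ with the mixed boundary conditions, so that the Galerkin projection is orthogonal in the energy and in the weighted $L^2$ inner product), set $V_n=\mathrm{span}\{\bfphi_1,\dots,\bfphi_n\}$, and seek $\bfu^n(t)\in V_n$ solving \eqref{weak_form} tested against $V_n$, with $\bfu^n(0)\to\bfu_0$ in $H^1(\Omega)^d$ and $\bfu^n_t(0)\to\bfu_1$ in $L^2(\Omega)^d$. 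Since $\kersig\in W^{1,1}(0,T)$, differentiating the memory convolution gives $(\kersig*\bfu^n_{tt})_t=\kersig(0)\bfu^n_{tt}+\kersig'*\bfu^n_{tt}$, and the coefficient $1+\kersig(0)\geq 1+\underline{k}>0$ multiplying $\bfu^n_{tt}$ is positive; hence the scheme is a linear second-kind Volterra integro-differential system in the coefficients with $L^1$ kernels, uniquely and globally solvable on $[0,T]$, with $\bfu^n\in W^{2,1}(0,T;V_n)$ and, by a resolvent bootstrap, $\bfu^n_t,\bfu^n\in C([0,T];V_n)$ -- enough regularity for all manipulations below.

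The core is the energy estimate. Set $\bfz^n=\bfu^n+\kersig*\bfu^n_t$. The regrouping $\kereps*\bbA\epsi(\bfu^n_t)+\kertreps*\bbI\tr\,\epsi(\bfu^n_t)=\KerTen*\epsi(\bfu^n_t)+\bbC\epsi(\kersig*\bfu^n_t)$ -- essential, since it removes the merely $L^1$ kernels $\kereps,\kertreps$ from the memory terms and leaves only the $W^{1,1}$ kernels $\kersig$ and $\KerTen$ -- together with \eqref{gf} recasts the Galerkin identity as $(\rho\bfz^n_{tt},\bfv)_\Omega+(\bbC\epsi(\bfz^n)+\KerTen*\epsi(\bfu^n_t),\epsi(\bfv))_\Omega=(\bfg+\rho\kersig'(t)\bfu^n_t(0),\bfv)_\Omega+(\bfh+(\kersig*\bfh)_t,\bfv)_{\GN}$ for $\bfv\in V_n$. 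Testing with the admissible $\bfv=\bfz^n_t\in V_n$ and integrating over $(0,t)$, the first two terms give $\tfrac12\|\sqrt\rho\,\bfz^n_t\|_{L^2}^2+\tfrac12(\bbC\epsi(\bfz^n),\epsi(\bfz^n))_\Omega$ between $0$ and $t$, and since $\epsi(\bfz^n_t)=\epsi(\bfu^n_t)+(\kersig*\epsi(\bfu^n_t))_t$ the memory term splits into precisely $\int_0^t(\KerTen*\epsi(\bfu^n_t)):\epsi(\bfu^n_t)\ds\geq\underline{\gamma}\|\epsi(\bfu^n_t)\|_{H^{-\delta}(0,t;L^2)}^2$ (by \eqref{AssumptionKernel2}) and $\int_0^t(\KerTen*\epsi(\bfu^n_t)):(\kersig*\epsi(\bfu^n_t))_t\ds\geq0$ (by \eqref{AssumptionKernel1}), both with $\bfy=\epsi(\bfu^n_t)\in W^{1,1}(0,T)$. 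Estimating the volume forcing by Cauchy--Schwarz and Young, the Neumann term by integration by parts in time (licit because $\bfh+(\kersig*\bfh)_t\in W^{1,1}(0,t;H^{-1/2}(\GN)^d)\hookrightarrow C$ while $\bfz^n\in L^\infty(0,t;H^1)$ controls the boundary trace, whereas $\bfz^n_t$ is not controlled in $H^1$), and using that $\bbC$ is positive definite together with Korn's inequality on $\bfH^1_{\textup{D}}(\Omega)$ (valid since $\GD\neq\emptyset$), I obtain, after taking the supremum in $t$ and absorbing, a bound on $\|\bfz^n_t\|_{L^\infty(0,t;L^2)}^2+\|\bfz^n\|_{L^\infty(0,t;H^1)}^2+\underline{\gamma}\|\epsi(\bfu^n_t)\|_{H^{-\delta}(0,t;L^2)}^2$ by the right-hand side of \eqref{low_energy_est1}. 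Returning to $\bfu^n$: integrating the convolution in $\bfu^n=\bfz^n-\kersig*\bfu^n_t$ by parts in time yields $(1+\kersig(0))\bfu^n(t)=\bfz^n(t)+\kersig(t)\bfu^n(0)-(\kersig'*\bfu^n)(t)$, and differentiating the defining relation yields $(1+\kersig(0))\bfu^n_t(t)=\bfz^n_t(t)-(\kersig'*\bfu^n_t)(t)$; as $1+\kersig(0)>0$ and $\kersig'\in L^1(0,T)$, a generalized Grönwall inequality converts the $\bfz^n$-bounds into the corresponding $\bfu^n$-bounds, giving \eqref{low_energy_est1} uniformly in $n$.

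With these uniform bounds, along a subsequence $\bfu^n\overset{*}{\rightharpoonup}\bfu$ in $L^\infty(0,T;\bfH^1_{\textup{D}}(\Omega))$, $\bfu^n_t\overset{*}{\rightharpoonup}\bfu_t$ in $L^\infty(0,T;L^2(\Omega)^d)$, and $\epsi(\bfu^n_t)\rightharpoonup\epsi(\bfu_t)$ in $H^{-\delta}(0,T;L^2(\Omega)^{d\times d})$; the problem being linear, I pass to the limit in the Galerkin identity against fixed $\bfv\in V_m$ and conclude by density for all $\bfv\in\bfH^1_{\textup{D}}(\Omega)$ (the convolution terms staying continuous under these convergences thanks to the $W^{1,1}$ regularity of $\kersig$ and $\KerTen$ -- again the regrouping avoids convolving $\epsi(\bfu_t)$ with the $L^1$ kernels), the initial data passing to the limit in the usual way, and weak lower semicontinuity of the norms giving \eqref{low_energy_est1} for $\bfu$; uniqueness follows by applying the same energy identity to the difference of two solutions with vanishing data. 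For the time regularity, from \eqref{forward}--\eqref{gf} one has $\rho\bfz_{tt}=\div[\bbC\epsi(\bfz)+\KerTen*\epsi(\bfu_t)]+\bfg$ with $\bbC\epsi(\bfz)\in L^\infty(0,T;L^2)$ and, by integration by parts in time, $\KerTen*\epsi(\bfu_t)=\KerTen(0)\epsi(\bfu)-\KerTen(t)\epsi(\bfu_0)+\KerTen'*\epsi(\bfu)\in L^\infty(0,T;L^2)$, so $\rho\bfz_{tt}\in L^1(0,T;(\bfH^1_{\textup{D}}(\Omega))^*)$, and in $L^\infty$ of that space if $\bfg\in L^\infty(0,T;L^2)$. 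Since $\rho\bfu_{tt}+\rho(\kersig*\bfu_{tt})_t=\rho\bfz_{tt}-\rho\kersig'(t)\bfu_1$ and $(\kersig*\bfu_{tt})_t=\kersig(0)\bfu_{tt}+\kersig'*\bfu_{tt}$, $\rho\bfu_{tt}$ solves a second-kind Volterra equation in $(\bfH^1_{\textup{D}}(\Omega))^*$ with $L^1$ kernel $\kersig'/(1+\kersig(0))$, hence inherits the integrability of its right-hand side: $L^1(0,T;(\bfH^1_{\textup{D}}(\Omega))^*)$ in general, and $L^\infty$ there once $\bfu_1=0$ (which deletes the $\rho\kersig'\bfu_1$ term) and $\bfg\in L^\infty(0,T;L^2)$.

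I expect the genuine obstacle to be the energy estimate: one must regroup the memory terms so that only $W^{1,1}$ kernels remain, choose $\bfz^n_t$ as test function so that exactly the bilinear forms of \eqref{AssumptionKernel1}--\eqref{AssumptionKernel2} are produced, treat the Neumann datum by time integration by parts (because $\bfu_t$ is not bounded in $H^1$), and then close the estimate for $\bfu$ rather than for $\bfz$ through the Grönwall argument on the $\bfu\leftrightarrow\bfz$ relation; the low regularity of the kernels ($\kereps,\kertreps\in L^1$ only) makes the regrouping indispensable already for the weak form \eqref{weak_form} to be meaningful.
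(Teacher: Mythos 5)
Your construction, energy estimate, and regularity bootstrap follow the paper's proof essentially step for step: the same Galerkin basis, the same reduction of the semi-discrete system to a second-kind Volterra equation using $1+\kersig(0)>0$, the same test function $\bfz^n_t$ producing exactly the two coercivity terms \eqref{AssumptionKernel1}--\eqref{AssumptionKernel2}, the same time-integration-by-parts treatment of the Neumann datum, and the same identity $\KerTen*\epsi(\bfu_t)=\KerTen_t*\epsi(\bfu)+\KerTen(0)\epsi(\bfu)-\KerTen(t)\epsi(\bfu_0)$ for the dual bound on $\rho\bfz_{tt}$. The only stylistic deviation is how you pass from bounds on $\bfz^n$ to bounds on $\bfu^n$: you invoke a Volterra--Gr\"onwall inequality with kernel $\kersig'/(1+\kersig(0))$, whereas the paper exploits the monotonicity of $\kersig$ to compute $\|\kersig'\|_{L^1(0,T)}=\kersig(0)-\kersig(T)$ exactly and absorb it, obtaining the cleaner constant $(1+\underline{k})^{-1}$; both are valid, and yours has the minor advantage of not needing monotonicity for that particular step.

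There is, however, a genuine gap in your uniqueness argument. You propose to conclude uniqueness ``by applying the same energy identity to the difference of two solutions with vanishing data,'' but that energy identity was obtained by testing with $\bfz_t$, which for a weak solution lies only in $L^\infty(0,T;L^2(\Omega)^d)$ and is therefore \emph{not} an admissible test function in \eqref{weak_form} (test functions must belong to $\bfH^1_{\textup{D}}(\Omega)$, since the elastic and memory terms pair against $\epsi(\bfv)$). You yourself note this lack of spatial control on $\bfz_t$ when justifying the integration by parts for the Neumann term, so the same obstruction applies here: the computation is licit at the Galerkin level but does not transfer to the difference of two limit solutions. The paper circumvents this with the standard Ladyzhenskaya--Evans device: for fixed $s$ one tests with $\bfv(t)=\int_t^s\bfz(\tau)\,\textup{d}\tau$ for $t\le s$ and $\bfv=0$ otherwise, which is $\bfH^1_{\textup{D}}(\Omega)$-valued, satisfies $\bfv_t=-\bfz$ and $\bfv(s)=0$, and after integration by parts in time yields $\tfrac12\|\rho^{1/2}\bfz(s)\|^2+\tfrac12\|\bbC^{1/2}\epsi(\bfv(0))\|^2+\int_0^s(\KerTen*\epsi(\bfu),\epsi(\bfz))\,\textup{d}t=0$, whence $\bfz=0$ by the nonnegativity assumptions and then $\bfu=0$ from the Volterra relation $k_{\sigma t}*\bfu+(1+\kersig(0))\bfu=0$ with $\bfu(0)=0$. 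Without this (or an equivalent duality/transposition argument), your uniqueness claim is unsupported.
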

\begin{proof}
	The proof follows by employing a Galerkin discretization in space based on the eigenvectors $\{\boldsymbol{\phi}_j\}_{j \geq 1}$, where
	\[
	a(\boldsymbol{\phi}_j, \bfv)= \tilde{\lambda}_j (\rho \boldsymbol{\phi}_j, \bfv)_{L^2}, \quad \forall \bfv \in  \bfH^1_{\textup{D}}(\Omega)
	\]
with the bilinear form
\[
a(\boldsymbol{\phi}_j, \bfv)=\intO \bbC \epsi(\boldsymbol{\phi}_j): \epsi(\bfv)\dx;
\]	
cf.~\cite{oparnica2020well}. The basis can be chosen as orthonormal in 
\corrBK{the weighted Lebesgue space} 
$L^2_{\rho}(\Omega)^d$, as proven in~\cite[Lemma 4.1]{oparnica2020well}. Denote $V_n= \textup{span}\{\boldsymbol{\phi}_1, \ldots, \boldsymbol{\phi}_n\}$. We seek an approximate solution in the form
\[
\bfu^n =\sum_{j=1}^n \xi_j(t)\bfphi_j(x)
\]	
with $\bfu^n(0)$,  $\bfu_t^n(0)$ 
chosen as $L^2_{\rho}$ projections of $\bfu_0$, $\bfu_1$
onto $V_n$. The semi-discrete problem can then be rewritten as the system
\begin{equation}
\begin{aligned}
&\begin{multlined}[t]
\xi_{i, tt}(t)+ (\kersig*\xi_{i, tt})_t(t)+\tilde{\lambda}_i \xi_i(t)\\+\sum_{j=1}^n (\kereps*\xi_{j, t})(t)(\bbA \epsi(\bfphi_j),\epsi(\bfphi_i)) +\sum_{j=1}^n (\kertreps*\xi_{j, t})(t) (\tr\,\epsi(\bfphi_j)\bbI, \tr\,\epsi(\bfphi_i)) \end{multlined}\\
=&\,(\bfg(t), \bfphi_i)_{\Omega}+((\bfh+(\kersig*\bfh)_t)(t), \bfphi_i)_{\GN}
\end{aligned}
\end{equation}
for $i=1, \ldots, n$ and $t \in [0,T]$. In vector notation $\bfxi=[\xi_1 \ \ldots  \ \xi_n]^T$, we have 
		\begin{equation} \label{semidiscr_system}
		\begin{aligned}
		\bfxi_{tt} +  (\kersig*\bfxi_{tt})_t + \tilde{\bm{\lambda}}\bfxi + \mathbf{A}\kereps*\bfxi_t + \bm{\mu} \kertreps*\bfxi_t= \bm{b},
		\end{aligned}
		\end{equation}
		with  the right-hand side vector given by
\begin{equation}
\begin{aligned}
\bm{b}(t) = (\bfg(t), \bfphi_i)_{\Omega}+((\bfh+(\kersig*\bfh)_t)(t), \bfphi_i)_{\GN}.
\end{aligned}
\end{equation}
Further, we have introduced $\tilde{\bm{\lambda}}=[\tilde{{\lambda}}_1 \ \ldots  \ \tilde{{\lambda}}_n]$, $\mathbf{A}=[\mathbf{A}_{ij}]_{n \times n}$, and $\bm{\mu}=[\bm{\mu}_{ij}]_{n \times n}$ with 
\begin{equation}
\begin{aligned}
\mathbf{A}_{ij}=(\bbA \epsi(\bfphi_j),\epsi(\bfphi_i)), \quad \bm{\mu}_{ij}=(\tr\,\epsi(\bfphi_j)\bbI, \tr\,\epsi(\bfphi_i)).
\end{aligned}
\end{equation}
Note that we have $\kersig \in W^{1,1}(0,T) \hookrightarrow C[0,T]$. Denote $\bfchi=\bfxi_{tt}$. Then
\begin{equation}
(\kersig *\bfxi_{tt})_t=k_{\sigma t}*\bfxi_{tt}+k_\sigma(0)\bfxi_{tt}
\end{equation}
with $k_{\sigma t} \in L^1(0,T)$. We have
\begin{equation}
\begin{aligned}
\bfchi_t=1*\bfchi+\bfxi_1, \quad \bfchi=1*1*\bfchi+t\bfxi_1+\bfxi_0.
\end{aligned}
\end{equation}
Then since we have assumed that $\kersig(0)>-1$, we can rewrite the equation as
\begin{equation}
\begin{aligned}
&(1+\kersig(0))\bfchi+k_{\sigma t}*\bfchi+\tilde{\bm{\lambda}}(1*1)*\bfchi+\mathbf{A}(\kereps*1)*\bfchi+ \bm{\mu}(\kertreps*1)*\bfchi\\
=&\,\bm{b} +\tilde{\bm{\lambda}}(t \bfxi_1+\bfxi_0)+\mathbf{A}\kereps(t)\bfxi_1+\bm{\mu}\kertreps(t)\bfxi_1.
\end{aligned}
\end{equation}
We can then see this problem as a system Volterra integral equations of the second kind
\begin{equation} \label{Volterra_SD}
\begin{aligned}
(1+\kersig(0))\bfchi+\bm{K}*\bfchi=\tilde{\bm{b}}(t)
\end{aligned}
\end{equation}
with the kernel
\begin{equation}
\bm{K}=k_{\sigma t}+\tilde{\bm{\lambda}}(1*1)+\mathbf{A}(\kereps*1)+ \bm{\mu}(\kertreps*1)
\end{equation}
and the right-hand side
\begin{equation}
\tilde{\bm{b}}(t)=\bm{b}(t) +\tilde{\bm{\lambda}}(t \bfxi_1+\bfxi_0)+\mathbf{A}\kereps(t)\bfxi_1+\bm{\mu}\kertreps(t)\bfxi_1.
\end{equation}
According to existence theory for the Volterra integral equations of the second kind (see, e.g.~\cite[Ch.\ 2, Theorem 3.5]{gripenberg1990volterra}), there exists a unique $\bfchi \in L^1(0,T)$, which solves \eqref{Volterra_SD}. Combined with the imposed initial conditions, we recover a unique $\bfxi \in W^{2,1}(0,T)$ and thus $\bfu \in W^{2,1}(0,T; \bfH^1_{\textup{D}}(\Omega))$. Note that then the combined function $\bfz=\kersig*\bfu_t+\bfu$ due to \eqref{diffconvol} and the identities
\begin{equation}\label{identityzt}
\bfz_t=k_{\sigma t}*\bfu_t+(1+\kersig(0))\bfu_t,
\end{equation}
\begin{equation}\label{identityztt}
\bfz_{tt}=k_{\sigma t}*\bfu_{tt}+k_{\sigma t}*\bfu_t(0)+(1+\kersig(0))\bfu_{tt}
\end{equation}
has the regularity
$\bfz\in W^{2,1}(0,T; \bfH^1_{\textup{D}}(\Omega))$ as well, which we will use in the energy analysis below.\\[2ex]
\noindent \emph{Energy analysis.} Testing the semi-discrete problem given in terms of $\bfz=\kersig*\bfu_t+\bfu$:
\begin{equation} \label{weak_form_z_all}
\begin{aligned} 
&\begin{multlined}
(\rho \bfz_{tt}, \bfv)_{\Omega}+(\bbC \epsi(\bfz), \epsi(\bfv))_{\Omega}+(\KerTen*\epsi(\bfu_t), \epsi(\bfv))_{\Omega}\\=(\bfg, \bfv)_{\Omega}+(\bfh+(\kersig*\bfh)_t, \bfv)_{\GN}, 
\end{multlined}
\end{aligned}
\end{equation}
with $\bfz_t(t)=(\kersig*\bfu_t+\bfu)_t(t) \in V_n$ and integrating in space and time leads at first to the following identity:
\begin{equation} \label{1st_identity}
\begin{aligned}
&\begin{multlined}[t] \frac12 \|\rho^{1/2}\bfz_t(t)\|_{L^2(\Omega)^d}^2 + \frac12 \|\bbC^{1/2}\epsi(\bfz(t))\|_{L^2(\Omega)^d}^2 
+\int_0^t (\KerTen*\epsi(\bfu_t), \epsi(\bfz_t))_{\Omega}\ds \end{multlined} \\
=&\, \begin{multlined}[t]\frac12 \|\rho^{1/2}\bfz_t(0)\|_{L^2(\Omega)^d}^2 + \frac12 \|\bbC^{1/2}\epsi(\bfz(0))\|_{L^2(\Omega)^d}^2 
+\int_0^t(\bfg, \bfz_t)_{\Omega}\ds\\+\int_0^t(\bfh+(\kersig*\bfh)_t, \bfz_t)_{\GN}\ds.\end{multlined}
\end{aligned}
\end{equation}	
By H\"older's and Young's inequalities, we have
\[
\int_0^t(\bfg, \bfz_t)_{\Omega}\ds \leq  \left(\int_0^t \|\rho^{-1/2}\bfg\|_{L^2(\Omega)^d}\ds\right)^2+ \frac14 \sup_{s \in (0,t)} \|\rho^{1/2}\bfz_t(s)\|^2_{L^2(\Omega)^d}.
\]
Employing integration by parts with respect to time yields
\begin{equation}
\begin{aligned}
&\int_0^t(\bfh+(\kersig*\bfh)_t, \bfz_t)_{\GN}\ds
=\, (\bfh+(\kersig*\bfh)_t, \bfz)_{\GN}\Big\vert_0^t
-\int_0^t(\bfh_t+(\kersig*\bfh)_{tt}, \bfz)_{\GN}\ds.
\end{aligned}
\end{equation}
Thus,
\begin{equation}
\begin{aligned}
&\int_0^t(\bfh+(\kersig*\bfh)_t, \bfz_t)_{\GN}\ds\\
\leq&\, \begin{multlined}[t]
\|(\bfh+(\kersig*\bfh)_t)(t)\|_{H^{-1/2}(\GN)^d}\|\bfz(t)\|_{H^{1/2}(\GN)^d} 
+\|\bfh(0)\|_{H^{-1/2}(\GN)^d}\|\bfz(0)\|_{H^{1/2}(\GN)^d}\\
-\|\bfh_t+(\kersig*\bfh)_{tt}\|_{L^1(0,t;H^{-1/2}(\GN)^d)} \|\bfz\|_{L^\infty(0,t; H^{1/2}(\GN)^d)}.
\end{multlined}
\end{aligned}
\end{equation}
By recalling the definition of $\bfz$, we further have
\[
\begin{aligned}
\int_0^t (\KerTen*\epsi(\bfu_t), \epsi(\bfz_t))_{\Omega}\ds
 =\, \int_0^t (\KerTen*\epsi(\bfu_t),  (\kersig*\epsi(\bfu_t))_t+\epsi(\bfu_t))_{\Omega}\ds.
\end{aligned}
\]
We can employ our assumption \eqref{AssumptionKernel2} on the kernels, which guarantees that
\[
\int_0^t (\KerTen*\epsi(\bfu_t), \epsi(\bfu_t))_{\Omega}\ds \geq \underline{\gamma} \|\epsi(\bfu_t)\|^2_{H^{-\delta}(0,t; L^2(\Omega)^d)} .
\]
We can further use assumption  \eqref{AssumptionKernel1} to conclude that 
\begin{equation}
\begin{aligned}
&\int_0^t (\KerTen*\epsi(\bfu_t),  (\kersig*\epsi(\bfu_t))_t)_{\Omega}\ds  \geq 0.
\end{aligned}
\end{equation}
Thus, from \eqref{1st_identity}, by relying on the assumptions on $\rho$ and $\bbC$, we have the estimate
\begin{equation}  \label{est:z}
\begin{aligned}
& \|\bfz_t(t)\|_{L^2(\Omega)^d}^2 +  \|\bfz(t)\|_{H^1(\Omega)^d}^2 
+\underline{\gamma} \|\epsi(\bfu_t)\|^2_{H^{-\delta}(0,t; L^2(\Omega)^{2d})}\\
\lesssim&\, \begin{multlined}[t] \|\bfz_t(0)\|_{L^2(\Omega)^d}^2 + \|\bfz(0)\|_{H^1(\Omega)^d}^2 + \|\bfg\|_{L^1(0,T;L^2(\Omega)^d)}^2
\\+\|\bfh+(\kersig*\bfh)_t\|^2_{W^{1,1}(0,t;H^{-1/2}(\GN)^d)} \end{multlined}
\end{aligned}
\end{equation}
at first in a discrete setting.\\
\indent Applying the $L^\infty(0,T;L^2(\Omega)^d)$ norm to \eqref{identityzt} with monotonically decreasing kernel $\kersig$ and using the estimate
\begin{equation}\label{normLinftyut}
\begin{aligned}
&\|k_{\sigma t}*\bfu_t\|_{L^\infty(0,T;L^2(\Omega)^d)}\\
\leq&\, \|k_{\sigma t}\|_{L^1(0,T)} \|\bfu_t\|_{L^\infty(0,T;L^2(\Omega)^d)}
= (k_\sigma(0)-k_\sigma(t)) \|\bfu_t\|_{L^\infty(0,T;L^2(\Omega)^d)}
\end{aligned}
\end{equation}
under the assumption $\kersig(t)\geq \underline{k}>-1$ we get
\[
(1+\underline{k}) \|\bfu_t\|_{L^\infty(0,T;L^2(\Omega))} \leq \|\bfz_t\|_{L^\infty(0,T;L^2(\Omega))}
\]
Likewise, taking the $L^\infty(0,T;\bfH^1_{\textup{D}}(\Omega)^d)$ norm of $\bfz=\kersig*\bfu_t+\bfu=k_{\sigma\,t}*\bfu+\kersig(0)\bfu-\kersig\bfu(0)$, we conclude 
\[(1+\underline{k}) \|\bfu\|_{L^\infty(0,T;\bfH^1_{\textup{D}}(\Omega)^d)} \leq \|\bfz_t\|_{L^\infty(0,T;\bfH^1_{\textup{D}}(\Omega)^d)}+\|\kersig\|_{L^\infty(0,T)}\|\bfu_0\|_{\bfH^1_{\textup{D}}(\Omega)^d)}.\]
\indent Since $\KerTen\in W^{1,1}(0,T;\corrBK{\mbox{Sym}^4(\R^d)})$, we can obtain an additional estimate on $\bfz_{tt}$ as follows. For $\bfv \in \bfH^1_{\textup{D}}(\Omega)$ with $\|\bfv\|_{\bfH^1_{\textup{D}}(\Omega)} \leq 1$, we decompose it into $\bfv=\bfv^1+\bfv^2$, where $\bfv^1 \in \textup{span}\{\bfphi_1, \ldots, \bfphi_n\}$ and $\bfv^2 \in \textup{span}\{\bfphi_1, \ldots, \bfphi_n\}^{\bot}$; see, e.g.,~\cite[Ch.\ 7.2]{evans2010partial} for similar arguments. Then $\|\bfv^1\|_{\bfH^1_{\textup{D}}(\Omega)} \leq 1$
and
\begin{equation}
\begin{aligned}
(\rho \bfz_{tt}, \bfv)_{\Omega}=&\, (\rho \bfz_{tt}, \bfv^1)_{\Omega} \\=&\, -(\bbC \epsi(\bfz), \epsi(\bfv^1))_{\Omega}-(\KerTen*\epsi(\bfu_t), \epsi(\bfv^1))_{\Omega}+(\bfg, \bfv^1)_{\Omega}+(\bfh+(\kersig*\bfh)_t, \bfv^1)_{\GN}.
\end{aligned}
\end{equation}
From here, using the identity
\[
(\KerTen*\epsi(\bfu_t))(t)=(\KerTen_t*\epsi(\bfu))(t)+\KerTen(0)\epsi(\bfu(t))-\KerTen(t)\epsi(\bfu(0))
\]
we have
\begin{equation}
\begin{aligned}
\|\rho \bfz_{tt}\|_{L^{{p}}(0,T; (\bfH^1_{\textup{D}}(\Omega))^*)} \lesssim&\, \begin{multlined}[t] \|\bbC \epsi(\bfz)\|_{L^{p}(0,T; L^2(\Omega)^d)}+\|\KerTen_t* \epsi(\bfu)+\KerTen(0)\epsi(\bfu)\|_{L^{p}(0,T; L^2(\Omega)^d)} \\+ \|\bfg\|_{L^{p}(0,T;\bfH^1_{\textup{D}}(\Omega))^*}+\|\bfh+(\kersig*\bfh)_t\|_{{L^{p}}(0,T;H^{-1/2}(\GN)^d)}\\+\|\KerTen\|_{L^{p}(0,T)}\, \|\epsi(\bfu(0))\|_{L^2(\Omega)^d}
.\end{multlined}
\end{aligned}
\end{equation}
{With $p=1$ we get $\rho \bfz_{tt} \in L^1(0,T; (\bfH^1_{\textup{D}}(\Omega))^*)$. If additionally $\bfg\in L^\infty(0,T;\bfH^1_{\textup{D}}(\Omega))^*$ and $\bfu_t(t=0)=0$, then we can also use $p=\infty$.}\\
\indent Similarly to \eqref{normLinftyut}, by taking the $L^{{p}}(0,T;\bfH^1_{\textup{D}}(\Omega)^*)$ norm of \eqref{identityztt}, we conclude the same regularity for $\rho\bfu_{tt}$ as for $\rho\bfz_{tt}$. \\ 
\indent Usual compactness arguments then lead to a solution of the state problem. Uniqueness follows by proving that the only solution of the homogeneous problem
\begin{equation} \label{weak_form_z_all_zero}
\begin{aligned} 
&\begin{multlined}
\langle \rho \bfz_{tt}, \bfv \rangle+(\bbC \epsi(\bfz), \epsi(\bfv))_{\Omega}+(\KerTen*\epsi(\bfu_t), \epsi(\bfv))_{\Omega}=0
\end{multlined}
\end{aligned}
\end{equation}
is $\bfu=0$. To prove uniqueness, we cannot test the above weak form with $\bfz_t$ as in the semi-discrete setting due to its insufficient spatial regularity. Instead, following, e.g.,~\cite[Ch.\ 7.2]{evans2010partial}, we take
\begin{equation}
\begin{aligned}
\bfv= \begin{cases}
\displaystyle \int_t^s \bfz(\tau)\, \textup{d}\tau \ &\text{if} \ 0 \leq t \leq s, \\
0 \ &\text{if} \ s \leq t \leq T,
\end{cases}
\end{aligned}
\end{equation}
which satisfies $\bfv(t) \in \bfH^1_{\textup{D}}(\Omega)$ for each $0 \leq t \leq T$. Furthermore,
\[
\bfv_t=-\bfz, \quad 0 \leq t \leq s. 
\]
Since $\bfv(s)=0$ and (with zero initial data) $\bfz_t(0)=0$, taking this test function in \eqref{weak_form_z_all_zero} yields
\begin{equation}
\begin{aligned}
\int_0^s \left(\langle \rho \bfz_t, \bfz \rangle -(\bbC \epsi(\bfv_t), \epsi(\bfv))_{\Omega}+((\KerTen*\epsi(\bfu))_t, \epsi(\bfv))_{\Omega}\right)\dt =0,
\end{aligned}
\end{equation}
where we have also used that $\KerTen*\epsi(\bfu_t)=(\KerTen*\epsi(\bfu))_t-\KerTen(t)\epsi(\bfu(0))=(\KerTen*\epsi(\bfu))_t$. Integration by parts with respect to time yields
\begin{equation} \label{uniqueness_eq}
\begin{aligned}
\frac12 \|\rho^{1/2}\bfz(t)\|_{L^2(\Omega)^d}^2 + \frac12 \|\bbC^{1/2}\epsi(\bfv(0))\|_{L^2(\Omega)^d}^2 +\int_0^s(\KerTen*\epsi(\bfu), \epsi(\bfz))_{\Omega}\dt =0.
\end{aligned}
\end{equation}
Due to our assumptions on $\KerTen$, we know that
\begin{equation}
\begin{aligned}
\int_0^s(\KerTen*\epsi(\bfu), \epsi(\bfz))_{\Omega}\dt =&\, \int_0^s(\KerTen*\epsi(\bfu), \kersig*\epsi(\bfu_t)+\bfu)_{\Omega}\dt \\
=&\, \int_0^s(\KerTen*\epsi(\bfu), (\kersig*\epsi(\bfu))_t+\bfu)_{\Omega}\dt \geq 0.
\end{aligned}
\end{equation}
Thus, from \eqref{uniqueness_eq}, we conclude that $\bfz=0$ and thus \[\kersig*\bfu_t+\bfu = k_{\sigma t}*\bfu+\kersig(0)\bfu+\bfu=0.\] Together with $\bfu\vert_{t=0}=0$, this implies that $\bfu=0$.
\end{proof}
\noindent We next consider the well-posedness of the state problem for a singular $\kersig$. Note that the analysis will lead to the same estimate \eqref{est:z} of the combined quantity $\bfz$, but the resulting estimate for $\bfu$ crucially depends on whether $\kersig$ is singular or not. 
We also point out that even for a singular kernel $\kersig(t) \sim t^{-\gamma}$ as $t \rightarrow 0$, where $(\kersig*\bfu_{tt})_t$ behaves similary to a Riemann--Liouville derivative of order 
$\gamma$ of $\bfu_{tt}$, no initial condition on $\bfu_{tt}$ is imposed, which is in line with the theory of Riemann--Liouville fractional ODEs; see, e.g., \cite{KilbasSrivastavaTrujillo:2006,Podlubny:1999}.
\begin{proposition}\label{Pro:state:exis_uni2} 
	Given $T>0$, let assumptions \corrBK{\eqref{ass:rho}, \eqref{ass:CA} and}
assumptions \eqref{AssumptionKernel1}--\eqref{AssumptionKernel2} on the involved kernels hold. Assume that $\kersig {\in L^1(0,T)}$ is singular; that is, there exists $\gamma \in (0,1]$, such that \[\kersig(t) \sim t^{-\gamma} \ \text{as} \ t \rightarrow 0.\]  Furthermore, assume that $\kereps$, $\kertreps \in L^1(0,T)$. Consider problem \eqref{forward} with the initial conditions
	\[
	(\bfu, \bfu_t) \vert_{t=0} = (\bfu_0, \bfu_1) \in H^1(\Omega)^d \times \{0\}. 
	\]
	Further, assume that the source term and boundary data satisfy the assumptions of Proposition~\ref{Pro:state:exis_uni1}. Then there exists a unique 
		\begin{equation}
	\begin{aligned}
	\bfu \in \bfU =\left\{\bfu \in L^\infty(0,T; \bfH^1_{\textup{D}}(\Omega)): \, \bfu_t \in H^{\gamma-1}(0,T; H^1(\Omega)^d),\, \right. 
	\left.  \bfu_{tt} \in H^{\gamma-1}(0,T; L^2(\Omega)^d)\right\},
	\end{aligned}
	\end{equation}
	 such that \eqref{weak_form} 
	 holds, with $(\bfu, \bfu_t) \vert_{t=0} = (\bfu_0, \bfu_1)$. Furthermore, the solution satisfies the following estimate:
	 \begin{equation} \label{low_energy_est}
	 \begin{aligned}
	 & \|\bfu_t\|^2_{H^{\gamma-1}(0,T;H^1(\Omega)^d)}+
	 \|\bfu_{tt}\|^2_{H^{\gamma-1}(0,T;L^2(\Omega)^d)}
	 +\underline{\gamma} \|\epsi(\bfu_t)\|^2_{H^{-\delta}(0,t; L^2(\Omega)^{2d})}\\
	 \lesssim&\, \begin{multlined}[t] \|\bfu_0\|_{H^1(\Omega)^d}^2+\|\bfu_1\|_{L^2(\Omega)^d}^2  + \|\bfg\|_{L^1(0,T;L^2(\Omega)^d)}^2 
	 +\|\bfh+(\kersig*\bfh)_t\|^2_{W^{1,1}(0,t;H^{-1/2}(\GN)^d)}. \end{multlined}
	 \end{aligned}
	 \end{equation}
\end{proposition}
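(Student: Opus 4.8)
The plan is to follow the proof of Proposition~\ref{Pro:state:exis_uni1} closely; the only two steps that genuinely change in the singular case are the solvability of the semi-discrete system and the passage from the energy estimate for the combined variable $\bfz=\bfu+\kersig*\bfu_t$ to the estimate for $\bfu$. Accordingly, I would use the same spectral basis $\{\bfphi_j\}_{j\ge1}$, set $V_n=\textup{span}\{\bfphi_1,\dots,\bfphi_n\}$, choose $\bfu^n(0),\bfu^n_t(0)$ as the $L^2_\rho$-projections of $\bfu_0,\bfu_1=0$, and arrive at the semi-discrete system \eqref{semidiscr_system}. Since now $\kersig(0)=+\infty$, the reduction to a Volterra equation of the second kind used in Proposition~\ref{Pro:state:exis_uni1} is unavailable; instead, integrating the system once in time and using $\bfxi_t(0)=0$, $(\kersig*\bfchi)(0)=0$ for $\bfchi:=\bfxi_{tt}$, and $\bfxi_t=1*\bfchi$, $\bfxi=\bfxi_0+1*1*\bfchi$, one obtains a system of Volterra equations of the \emph{first} kind,
\[
(1+\kersig)*\bfchi+\bigl[\tilde{\bm\lambda}\,(1*1*1)+\mathbf{A}\,(1*\kereps*1)+\bm\mu\,(1*\kertreps*1)\bigr]*\bfchi=\tilde{\bm b},
\]
with $\tilde{\bm b}\in W^{1,1}(0,T)$ and $\tilde{\bm b}(0)=0$ (the right-hand side $\bm b$ of \eqref{semidiscr_system} lies in $L^1(0,T)$ by the hypotheses on $\bfg$ and $\bfh+(\kersig*\bfh)_t$). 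The dominant kernel $1+\kersig$ has an Abel-type singularity of order $1-\gamma$ at the origin, so its resolvent in the convolution algebra is again weakly singular, and the theory of first-kind Volterra equations with weakly singular kernels (see, e.g.,~\cite{gripenberg1990volterra}) yields a unique $\bfchi\in L^1(0,T)$ with $\bfchi(t)\sim t^{\gamma-1}$ as $t\to0$. Hence $\bfxi\in C^1([0,T])^n$ with $\bfxi_{tt}\in L^1(0,T)^n$, so that $\bfu^n_t\in C([0,T];V_n)$, $\bfu^n_{tt}\in L^1(0,T;V_n)$, and $\bfz^n=\bfu^n+\kersig*\bfu^n_t\in C([0,T];V_n)$.

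\textbf{Energy estimate for $\bfz^n$.} The estimate \eqref{est:z} for $\bfz^n$ is then obtained exactly as in the proof of Proposition~\ref{Pro:state:exis_uni1}: test the semi-discrete equation written in terms of $\bfz^n$ with $\bfz^n_t$, integrate in space and time, dispose of the $\KerTen$-contributions via \eqref{AssumptionKernel1}--\eqref{AssumptionKernel2}, and control the source and Neumann terms by H\"older's and Young's inequalities together with integration by parts in time. The one new subtlety is that, for a singular $\kersig$, $\bfz^n_t=\bfu^n_t+\kersig*\bfu^n_{tt}$ need not be absolutely continuous in time, so the energy identity should be justified by a regularization: replace $\kersig$ by the bounded monotone truncations $\kersig^{(m)}=\min(\kersig,m)$ (which still satisfy the standing assumptions, uniformly in $m$), apply Proposition~\ref{Pro:state:exis_uni1} to the regularized problems, and let $m\to\infty$. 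This yields \eqref{est:z} uniformly in $n$, with the right-hand side of \eqref{low_energy_est} (note $\bfz^n(0)=\bfu^n_0$ and, since $\bfu^n_1=0$, $\bfz^n_t(0)=0$).

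\textbf{From $\bfz^n$ to $\bfu^n$ -- the crux.} Since $\bfu^n_1=0$, \eqref{diffconvol} gives $\bfz^n_t=\bfu^n_t+\kersig*\bfu^n_{tt}$, and integrating once yields the first-kind Volterra equation $(1+\kersig)*\bfu^n_t=\bfz^n-\bfu^n_0$, whose right-hand side lies in $L^\infty(0,T;\bfH^1_{\textup{D}}(\Omega))$ and vanishes at $t=0$. Passing to the Laplace transform (variable $\lt$) and using the Abel asymptotics $\widehat{\kersig}(\lt)\sim c\,\lt^{\gamma-1}$ as $\lt\to\infty$, which follows from $\kersig(t)\sim t^{-\gamma}$ as $t\to0$, the multiplier $m(\lt):=\lt/(1+\lt\widehat{\kersig}(\lt))$ is bounded on $\{\Re\lt\ge0\}$ and grows like $\lt^{1-\gamma}$ at infinity, that is, it acts as a time-differentiation of order $1-\gamma$. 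From $\widehat{\bfu^n_t}=m(\lt)\,\widehat{(\bfz^n-\bfu^n_0)}$ with $\bfz^n-\bfu^n_0\in L^2(0,T;H^1)$ vanishing at $t=0$, and from $\widehat{\bfu^n_{tt}}=\lt\,\widehat{\bfu^n_t}=m(\lt)\,\widehat{\bfz^n_t}$ with $\bfz^n_t\in L^2(0,T;L^2)$, a standard Laplace-multiplier argument gives $\bfu^n_t\in H^{\gamma-1}(0,T;H^1(\Omega)^d)$ and $\bfu^n_{tt}\in H^{\gamma-1}(0,T;L^2(\Omega)^d)$, with norms controlled by \eqref{est:z}; moreover $\widehat{\bfu^n}=(1+\lt\widehat{\kersig}(\lt))^{-1}\bigl(\widehat{\bfz^n}+\widehat{\kersig}(\lt)\bfu^n_0\bigr)$, and since $(1+\lt\widehat{\kersig}(\lt))^{-1}$ is the Laplace transform of an $L^1(0,T)$ kernel (behaving like the Abel kernel $g_\gamma$ near $t=0$), Young's inequality gives $\bfu^n\in L^\infty(0,T;\bfH^1_{\textup{D}}(\Omega))$ -- the Dirichlet condition being preserved since the convolution acts only in time. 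Together these bounds give \eqref{low_energy_est}, uniformly in $n$. I expect this step -- making the Laplace-multiplier / fractional-calculus estimates rigorous at this low regularity, and in particular recovering an $L^\infty$-in-time $H^1$ bound on $\bfu$ from a bound on $\bfz$ that is merely $L^\infty$ in time -- to be the main obstacle.

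\textbf{Passage to the limit and uniqueness.} The uniform bounds above are bounds in (duals of) reflexive spaces, so along a subsequence $\bfu^n\rightharpoonup\bfu$ in $L^\infty(0,T;\bfH^1_{\textup{D}}(\Omega))$, $\bfu^n_t\rightharpoonup\bfu_t$ in $H^{\gamma-1}(0,T;H^1(\Omega)^d)$, $\bfu^n_{tt}\rightharpoonup\bfu_{tt}$ in $H^{\gamma-1}(0,T;L^2(\Omega)^d)$, and $\epsi(\bfu^n_t)\rightharpoonup\epsi(\bfu_t)$ in $H^{-\delta}(0,T;L^2(\Omega)^{2d})$; since the weak form \eqref{weak_form} is linear in $\bfu$, the limit satisfies it, the initial data are attained in the appropriate weak-continuity sense (using that $\bfz^n$, hence $\bfu^n$, is bounded in $C([0,T];L^2(\Omega)^d)$ and $\bfu^n(0)=\bfu^n_0\to\bfu_0$), and \eqref{low_energy_est} survives by weak lower semicontinuity. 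Uniqueness follows as in Proposition~\ref{Pro:state:exis_uni1}: for the homogeneous problem \eqref{weak_form_z_all_zero} one tests with $\bfv(t)=\int_t^s\bfz(\tau)\dtau$ for $0\le t\le s$ and $\bfv=0$ for $s<t\le T$, uses $\bfv_t=-\bfz$ and $\bfz_t(0)=0$, integrates by parts in time, invokes the coercivity assumption \eqref{AssumptionKernel1} to conclude $\bfz=0$, and then $\bfu=0$ from $(1+\kersig)*\bfu_t=0$ together with $\bfu(0)=0$.
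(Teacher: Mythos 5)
Your proposal follows the same overall architecture as the paper (Galerkin basis, solvability of the semi-discrete system via Volterra theory, the energy estimate for $\bfz=\bfu+\kersig*\bfu_t$, then transfer of regularity to $\bfu$), but the two steps that actually distinguish the singular case are handled by genuinely different technical means. For the semi-discrete system, the paper does not work with a first-kind equation for $\bfchi=\bfxi_{tt}$: it constructs, via Tauberian theorems, a reciprocal kernel $\ell\in L^1$ with $\ell*\kersig=1$ and $\ell(t)\sim t^{\gamma-1}$, and changes the unknown to $\widehat{\bfchi}=(\kersig*\bfxi_{tt})_t$, which turns the system into a \emph{second}-kind Volterra equation with $L^1$ kernel. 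This choice of unknown is not cosmetic: it delivers $\bfz\in W^{2,1}(0,T;\bfH^1_{\textup{D}}(\Omega))$ directly, so the energy identity for $\bfz$ is justified without any regularization. Your truncation $\kersig^{(m)}=\min(\kersig,m)$ is therefore unnecessary, and it is also delicate: since $\KerTen=\bbA\kereps-\bbC\kersig+\bbI\kertreps\tr$ depends on $\kersig$, truncating $\kersig$ changes $\KerTen$, and there is no reason the coercivity assumptions \eqref{AssumptionKernel1}--\eqref{AssumptionKernel2} survive the truncation uniformly in $m$; this is the one step of your argument I would call a genuine gap as written. For the transfer from $\bfz$ to $\bfu$, the paper again routes through $\ell$: the auxiliary variable $\bfv=\kersig*\bfu_t$ satisfies second-kind Volterra equations with kernel $\ell$ (estimated under $\|\ell\|_{L^1(0,T)}<1$), and then $\kersig*\bfu_t=\bfv$ is treated as an Abel equation via \cite{GorenfloYamamoto99}, for which the asymptotic hypothesis $\kersig\sim t^{-\gamma}$ must be strengthened to the structural form $\kersig(t)=\kersig^\gamma(t)t^{-\gamma}$ with $\kersig^\gamma\in C[0,T]\cap W^{1,\infty}(0,T)$, $\kersig^\gamma(0)\neq0$. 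Your Laplace-multiplier argument is morally equivalent and gives the same exponent count, but it needs exactly the analogous strengthening: boundedness of $\lt/(1+\lt\widehat{\kersig}(\lt))$ by $C(1+|\lt|)^{1-\gamma}$ on a right half-plane requires a lower bound on $|1+\lt\widehat{\kersig}(\lt)|$, which does not follow from the stated one-sided asymptotics alone, and you must also handle the restriction from $(0,\infty)$ to $(0,T)$. You correctly identify this as the main obstacle; the paper's route through the explicit reciprocal kernel and the Gorenflo--Yamamoto theorem is the cleaner way to discharge it.
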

\begin{proof}
The proof in the singular case follows via a Galerkin approximation similarly to before. However, to prove the existence of the semi-discrete problem, since $\kersig$ is singular, we can now introduce $\ell$, such that $\ell * \kersig=1$ with $\ell \in L^1(0, \infty)$, according to Tauberian Theorems \cite[Theorems 2, 3, Chapter XIII.5]{Feller:1971}
\[ \begin{aligned}
\kersig(t)\sim t^{-\gamma}\mbox{ as }t\to0\ \Leftrightarrow \
\LT{\kersig}(s)\sim s^{\gamma-1}\mbox{ as }s\to\infty\ \Leftrightarrow \\
\LT{\ell}(s)=\frac{\LT{1}}{\LT{\kersig}(s)}=\frac{1}{s\LT{\kersig}(s)}\sim s^{-\gamma}\mbox{ as }s\to\infty\ \Leftrightarrow \ 
\ell(s)\sim t^{\gamma-1}\mbox{ as }t\to0.
\end{aligned}\]
We then set
	\begin{equation} \label{def_chi}
	\begin{aligned}
	\hat{\bfchi}=(\kersig *\bfxi_{tt})_t. 
	\end{aligned}
	\end{equation} 
	as the new unknown. From \eqref{def_chi}, we have
	\begin{equation}
	\bfxi_{tt}(t) =(\ell*\hat{\bfchi})(t) 
	\end{equation} 
	Then
	\begin{equation}
	\begin{aligned}
	\bfxi_t=&\,1*\bfxi_{tt}+\bfxi_1=1*\ell*\hat{\bfchi}+\bfxi_1, \\
	\bfxi=&\,1*\bfxi_t+\bfxi_0=1*1*\ell*\hat{\bfchi}+\bfxi_1 t+\bfxi_0
	\end{aligned}
	\end{equation}
	with $\bm{\xi}_0$ and $\bm{\xi}_1$, 
	being the coordinates of $\bfu^n(0)$ and $\bfu_t^n(0)$ 
	in the basis, respectively. Equation \eqref{semidiscr_system} results in the following equation for $\hat{\bfchi}$:
	\begin{equation}
	\begin{aligned}
	&\ell*\hat{\bfchi}+\hat{\bfchi}+\tilde{\bm{\lambda}}(1*1*\ell)*\hat{\bfchi}+\mathbf{A}(1*\kereps*\ell)*\hat{\bfchi}+\bm{\mu}(1*\kertreps*\ell)*\hat{\bfchi} \\
	=&\, \bm{b}+\tilde{\bm{\lambda}}(\bfxi_1t+\bfxi_0)+\mathbf{A}\kereps(t)\bfxi_1+\bm{\mu}\kertreps(t)\bfxi_1.
	\end{aligned}
	\end{equation}
	We can see this problem as a system of Volterra integral equations of second kind
	\begin{equation}
	\begin{aligned}
	\hat{\bfchi}+\bm{K}*\hat{\bfchi}=\tilde{\bm{b}}
	\end{aligned}
	\end{equation}
	with the kernel
	\begin{equation}
	\bm{K}=\ell+\tilde{\bm{\lambda}}(1*1*\ell)+\mathbf{A}(1*\kereps*\ell)+\bm{\mu}(1*\kertreps*\ell)
	\end{equation}
	and the right-hand side
	\begin{equation}
	\tilde{\bm{b}}(t)=\bm{b}+\tilde{\bm{\lambda}}(\bfxi_1t+\bfxi_0)+\mathbf{A}\kereps(t)\bfxi_1+\bm{\mu}\kertreps(t)\bfxi_1.
	\end{equation}
Similarly to before, existence theory for the Volterra integral equations of the second kind yields a unique $\hat{\bfchi} \in L^1(0,T)$ and thus $\bfz$, $\bfu \in W^{2,1}(0,T; \bfH^1_{\textup{D}}(\Omega))$.
\\

\noindent \emph{Energy analysis.} We can derive the uniform estimate \eqref{est:z} for $\bfz$ as before. In order to conclude additional regularity of $\bfu$, we use the fact that the auxiliary function $\bfv=\kersig*\bfu_t$, which vanishes at $t=0$, satisfies, along with its derivatives, the Volterra integral equations
\[
\bfv +\ell*\bfv =\bfz-\bfu_0\,, \quad
\bfv_t +\ell*\bfv_t =\bfz_t\,, \quad
\bfv_{tt} +\ell*\bfv_{tt} =\bfz_{tt}+\ell(t)\bfv_t(0).
\]
We then take the $L^\infty(0,T;\bfH^1_{\textup{D}}(\Omega))$, $L^\infty(0,T;L^2(\Omega)^d)$, and $L^1(0,T;(\bfH^1_{\textup{D}}(\Omega))^*)$ norms, respectively and use the estimate
\[\|\ell*\bfw\|_{L^\infty(0,T;X)}\leq\|\ell\|_{L^1(0,T)}\|\bfw\|_{L^\infty(0,T;X)}.\]
Assuming $\|\ell\|_{L^1(0,T)}< 1$, we can thus estimate 
\[
\begin{aligned}
(1-\|\ell\|_{L^1(0,T)}) \|\bfv\|_{L^\infty(0,T;\bfH^1_{\textup{D}}(\Omega))}
&\leq \|\bfz\|_{L^\infty(0,T;\bfH^1_{\textup{D}}(\Omega))}+\|\bfu_0\|_{\bfH^1_{\textup{D}}(\Omega)^d)}\,, \\
(1-\|\ell\|_{L^1(0,T)}) \|\bfv_t\|_{L^\infty(0,T;L^2(\Omega)^d)}
&\leq \|\bfz_t\|_{L^\infty(0,T;L^2(\Omega)^d)}\,, \\
(1-\|\ell\|_{L^1(0,T)}) \|\bfv_{tt}\|_{L^1(0,T;L^2(\Omega)^d)}
&\leq \|\bfz_{tt}\|_{{L^1}(0,T;(\bfH^1_{\textup{D}}(\Omega))^*)}+\|\ell\|_{L^1(0,T)}{\|\bfv_t(0)\|_{(\bfH^1_{\textup{D}}(\Omega))^*}}\\
&\leq 
\|\bfz_{tt}\|_{{L^1}(0,T;(\bfH^1_{\textup{D}}(\Omega))^*)}+{\|\bfz_t(0)\|_{(\bfH^1_{\textup{D}}(\Omega))^*}}
\end{aligned}
\]
To extract additional temporal regularity of $\bfu$ from these bounds, we consider 
$\kersig*\bfu_t=\bfv$ as an Abel integral equation for $\bfu_t$  and specify the assumption $\kersig\sim t^{-\gamma}$ for $t \rightarrow 0$ to $\kersig^\gamma(t):=\kersig(t) t^{-\gamma}$ satisfying $\kersig^\gamma(0)\not=0$, $\kersig^\gamma\in C[0,T]\cap W^{1,\infty}(0,T)$.
Then from, e.g., \cite[Theorem 1]{GorenfloYamamoto99} we can conclude
\[
\begin{aligned}
&\|\bfu_t\|^2_{H^{\gamma-1}(0,T;\bfH^1_{\textup{D}}(\Omega))}+
\|\bfu_{tt}\|^2_{H^{\gamma-1}(0,T;L^2(\Omega)^d)}\\
\lesssim&\, \begin{multlined}[t] \|\bfz_t(0)\|_{L^2(\Omega)^d}^2 + \|\bfz(0)\|_{H^1(\Omega)^d}^2 + \|\bfg\|_{L^1(0,T;L^2(\Omega)^d)}^2
+\|\bfh+(\kersig*\bfh)_t\|^2_{W^{1,1}(0,t;H^{-1/2}(\GN)^d)}. \end{multlined}
\end{aligned}
\]  
The rest of the arguments follow as in the previous proposition.\\
\corrBK{
Note that the use of a Hilbert space argument like \cite[Theorem 1]{GorenfloYamamoto99} leads to a loss as compared to some possibly most sharp result, and also the assumption of differentiability of $\kersig^\gamma$ can probably be relaxed.
}
\end{proof}
We note that higher spatial regularity of the solution can be obtained in case $\Gamma_{\textup{N}}=\emptyset$; a discussion of this case in included in Appendix~\ref{Appendix1}.

\section{The adjoint problem} \label{Sec:Adjoint}
We next wish to derive the adjoint problem. Recall that we are considering the optimization problem
\begin{equation}\label{minJ_constr}
\min_{\vec{k},\bfu\in X\times \bfU} J(\vec{k},\bfu) \mbox{  such that $\bfu$ solves \eqref{forward}},
\end{equation}
where $\vec{k}=(\kersig,\kereps,\kertreps)$. Since
\begin{equation}
	\begin{aligned}
		\bfg = \bff + (\kersig* \bff)_t,
	\end{aligned}
\end{equation}
with $\bfu_t(t=0)=0$, the Lagrange function reads 
\begin{equation}\label{Lagrange}
\begin{aligned}
\begin{multlined}[t]\mathcal{L}(\vec{k},\bfu,\bfp)= J(\vec{k},\bfu)\\
+\int_0^T\int_\Omega\Bigl(\Bigl( \bigl(\rho\bfu_{tt}+\rho (\kersig*\bfu_{tt})_t -\bff - (\kersig*\bff)_t\bigr)\cdot\bfp+\bigl(\bbC\epsi(\bfu)+\kereps*\bbA\epsi(\bfu_t)\bigr):\epsi(\bfp)
\\
\hspace*{4cm}+ \kertreps*\textup{tr}\epsi(\bfu_t))\,\textup{tr}\epsi(\bfp))\Bigr)\dx-\int_{\GN}(\bfh+(\kersig*\bfh)_t)\cdot\bfp\, \textup{d}S\Bigr)\dt \end{multlined}
\end{aligned} 
\end{equation}
and we use a solution space that incorporates the initial data and homogeneous  
boundary conditions $\bfu\in \{\bfu_0+t\bfu_1\}+\tilde{\bfU}$
\begin{equation}
\begin{aligned}
\tilde{\bfU} \subseteq\{\bfu\in &\bfU:\
(\bfu, \bfu_t) \vert_{t=0} = (0,0)
\}.
\end{aligned}
\end{equation}
Formally writing the first order optimality conditions
\[
\frac{\partial\mathcal{L}}{\partial\kersig}=0\,, \quad
\frac{\partial\mathcal{L}}{\partial\kereps}=0\,, \quad
\frac{\partial\mathcal{L}}{\partial\kertreps}=0\,, \quad
\frac{\partial\mathcal{L}}{\partial\bfu}=0\,, \quad
\frac{\partial\mathcal{L}}{\partial\bfp}=0\,, \quad
\]
at $(\vec{k},\bfu,\bfp)$, the fourth and third yield the state and adjoint equation, respectively. We will now focus on the second to last condition:
\begin{equation}\label{dLdu0}
\begin{aligned}
0&=\frac{\partial\mathcal{L}}{\partial\bfu}(\vec{k},\bfu,\bfp)
\bfv\\
&=\frac{\partial\mathcal{J}}{\partial\bfu}(\vec{k},\bfu)\bfv 
+\int_0^T\int_\Omega\Bigl( \bigl(\rho\bfv_{tt}+\rho (\kersig*\bfv_{tt})_t\bigr)\cdot\bfp
+ \bigl(\bbC\epsi(\bfv)+\kereps*\bbA\epsi(\bfv_t)\bigr):\epsi(\bfp)\\
&\hspace*{5cm} 
+\kertreps*\textup{tr}\epsi(\bfu_t))\,\textup{tr}\epsi(\bfp))\Bigr)
\Bigr)\dxt
\end{aligned}
\end{equation}
for all $\bfv \in \tilde{\bfU}$ which is supposed to uniquely determine the adjoint state $\bfp\in \tilde{\bfP}$ with the test space $\tilde{\bfP}\subseteq L^2(0,T; L^2(\Omega)^d)$ yet to be determined.
Knowing that the adjoint state $\bfp$ will solve a backwards in time PDE with end conditions at $T$, we make a timeflip right away and write $\bfp(t)=\olbfp(T-t)$. This allows us to use the elementary integration by parts and transposition identities \eqref{integbyparts} and \eqref{integbyparts1}. This method yields
\[
\begin{aligned}
&\int_0^T\int_\Omega\rho\bfv_{tt}(t)\cdot\olbfp(T-t)\dxt 
=\int_0^T\int_\Omega\rho\bfv_t(t)\cdot\olbfp_t(T-t)\dxt 
+\int_\Omega \rho\bfv_t(T)\cdot\olbfp(0)\dx\\
&= \int_0^T\int_\Omega\rho\bfv(t)\cdot\olbfp_{tt}(T-t)\dxt 
+\int_\Omega \rho\Bigl(\bfv(T)\cdot\olbfp_t(0)+\bfv_t(T)\cdot\olbfp(0)\Bigr)\dx
\end{aligned}
\]
since $\bfv_t(0)=0$, $\bfv(0)=0$, 
\[
\begin{aligned}
&\int_0^T\int_\Omega \rho (\kersig*\bfv_{tt})_t(t)\cdot\olbfp(T-t)\dxt\\
=&\,\int_0^T\int_\Omega \rho (\kersig*\bfv_{tt})(t)\cdot\olbfp_t(T-t)\dxt
+\int_\Omega \rho (\kersig*\bfv_{tt})(T)\cdot\olbfp(0)\dx\\
=&\,\int_0^T\int_\Omega \rho \bfv_{tt}(t)\cdot(\kersig*\olbfp_t)(T-t)\dxt
+\int_\Omega \rho (\kersig*\bfv_{tt})(T)\cdot\olbfp(0)\dx\\
=&\,\int_0^T\int_\Omega \rho \bfv_t(t)\cdot(\kersig*\olbfp_t)_t(T-t)\dxt
+\int_\Omega \rho (\kersig*\bfv_{tt})(T)\cdot\olbfp(0)\dx\\
=&\,\begin{multlined}[t]\int_0^T\int_\Omega \rho \bfv(t)\cdot(\kersig*\olbfp_t)_{tt}(T-t)\dxt
\\+\int_\Omega \rho \Bigl(\bfv(T)\cdot(\kersig*\olbfp_t)_t(0)
+(\kersig*\bfv_{tt})(T)\cdot\olbfp(0)\Bigr)\dx \end{multlined}
\end{aligned}
\]
due to the fact that $(\kersig*\bfv_{tt})(0)=0$, $(\kersig*\olbfp_t)(0)=0$, $\bfv_t(0)=0$, and $\bfv(0)=0$, where \[(\kersig*\olbfp_t)_t(0)=\lim_{t\to0+}\kersig(t)\olbfp_t(0).\] Therefore, in case of a singularity in $\kersig$ at $t=0$, we need to impose $\olbfp_t(0)=0$ and
\[
\begin{aligned}
&\int_0^T\int_\Omega (\kereps*\bbA\epsi(\bfv_t))(t):\epsi(\olbfp(T-t))\dxt
=\int_0^T\int_\Omega \bbA\epsi(\bfv_t(t)):(\kereps*\epsi(\olbfp))(T-t)\dxt\\
&=\int_0^T\int_\Omega \epsi(\bfv(t)):(\bbA\kereps*\epsi(\olbfp_t))(T-t)\dxt
\end{aligned}
\]
where we have used $(\bbA\kereps*\epsi(\olbfp_t))(0)=0$ and $\epsi(\bfv(0))=0$.

Furthermore, we define $\nabla_u J(\vec{k},\bfu)\in \tilde{\bfU}^*$ by the variational equation
\[
\int_0^T\int_\Omega\nabla_uJ(\vec{k},\bfu)(x,t) \cdot\bfv(x,t)\dxt = \frac{\partial\mathcal{J}}{\partial\bfu}(\vec{k},\bfu)\bfv
\quad \mbox{ for all }\bfv\in\tilde{\bfU}\,.
\]

Altogether, \eqref{dLdu0} becomes
\[
\begin{aligned}
0&=\int_0^T\int_\Omega\Bigl(\bfv(t)\cdot\bigl(\nabla_uJ(\vec{k},\bfu)(t)
+\rho\olbfp_{tt}(T-t)+\rho (\kersig*\olbfp_t)_{tt}(T-t)\bigr)\\
&\hspace*{2cm}+\epsi(\bfv(t)):\bigl(\bbC\epsi(\olbfp(T-t))+(\bbA\kereps*\epsi(\olbfp_t))(T-t)\bigr) \\
&\hspace*{4cm}+\textup{tr}\epsi(\bfv(t))\,(\kertreps*\textup{tr}\epsi(\olbfp_t))(T-t)
\Bigr)\dxt\\
&\quad + \int_\Omega\rho\Bigl(\bfv(T)\cdot\bigl(1+\lim_{t\to0+}\kersig(t)\bigr)\olbfp_t(0)
+\bigl(\bfv_t(T)+(\kersig*\bfv_{tt})(T)\bigr)\cdot\olbfp(0)
\Bigr)\, dx
\end{aligned}
\]
Since $\bfv$ is arbitrary, this implies that $\olbfp$ solves (in weak sense) the following problem:
\begin{equation}
\left \{ \begin{aligned}
&\rho\olbfp_{tt}+\rho (\kersig*\olbfp_t)_{tt}-\div[\bbC\epsi(\olbfp)+\kereps*\bbA\epsi(\olbfp_t)
+\kertreps*\textup{tr}\epsi(\olbfp_t)\mathbb{I}]\\
&\hspace*{8cm}=-\nabla_u\overline{J}(\vec{k},\bfu), \quad && \text{in }\ \Omega \times (0,T),\\
& \olbfp =0 \qquad && \text{on }\ \GD \times (0,T), \\
& [\bbC\epsi(\olbfp)+\kereps*\bbA\epsi(\olbfp_t)
+\kertreps*\textup{tr}\epsi(\olbfp_t)\mathbb{I}]\cdot n =0 \qquad && \text{on }\ \GN \times (0,T), \\
& (\olbfp, \olbfp_t) \vert_{t=0} = (0,0) && \text{in }\ \Omega,
\end{aligned} \right.
\end{equation}
with $\overline{J}(\vec{k},\bfu)(t)=J(\vec{k},\bfu)(T-t)$.

\medskip 

This way, we can not only characterize a minimizer of \eqref{minJ}, \eqref{PDEconstr}, but also compute the gradient of the reduced cost function
\[
j(\vec{k}) = J(\vec{k},\bfu(\vec{k}))\,, \ \text{ where $\bfu(\vec{k})$ solves \eqref{PDEconstr}}
\]
by using the fact that $j(\vec{k}) = \mathcal{L}(\vec{k},\bfu(\vec{k}),\bfp(\vec{k}))$ which by means of the Chain Rule results in
\begin{equation}\label{redgrad}
\begin{aligned}
\frac{\partial j}{\partial\kersig}(\vec{k})h 
=&\frac{\partial\mathcal{L}}{\partial\kersig}(\vec{k},\bfu(\vec{k}),\bfp(\vec{k}))h\\
&+\frac{\partial\mathcal{L}}{\partial\bfu}(\vec{k},\bfu(\vec{k}),\bfp(\vec{k}))\frac{\partial \bfu}{\partial\kersig}(\vec{k})h
+\frac{\partial\mathcal{L}}{\partial\bfp}(\vec{k},\bfu(\vec{k}),\bfp(\vec{k}))\frac{\partial \bfp}{\partial\kersig}(\vec{k})h\\
=&\frac{\partial\mathcal{L}}{\partial\kersig}(\vec{k},\bfu(\vec{k}),\bfp(\vec{k}))h.
\end{aligned}
\end{equation}
Here we have used the fact that $\bfu(\vec{k})$ and $\bfp(\vec{k})$ solve the state and adjoint equations, respectively; and similarly for 
$\frac{\partial j}{\partial\kereps}$, $\frac{\partial j}{\partial\kertreps}$.

Well-posedness of the adjoint problem follows from Proposition ~\ref{Pro:state:exis_uni2} in case of a singular kernel $\kersig$, provided $\nabla_u J(\vec{k},\bfu)\in L^1(0,T; L^2(\Omega)^d)$.

\section{Estimation of the kernels from additional observations} \label{Sec:Inverse}
By taking Laplace transforms, we have shown in Remark~\ref{rem:models} that at least in the isotropic case \eqref{CArem} the two- and three kernel formulations are equivalent.
Therefore, without loss of generality, concerning parameter identification we will focus on the two-kernel model (formally setting $\kersig=0$):
\begin{equation}\label{PDEconstr_kersig_zero}
\left \{ \begin{aligned}
&\rho\bfu_{tt}-\div[\bbC\epsi(\bfu)+\kereps*\bbA\epsi(\bfu_t)+\kertreps*\textup{tr}\epsi(\bfu_t)\mathbb{I}]=\bff, \quad && \text{in }\ \Omega \times (0,T),\\
& \bfu =0 \qquad && \text{on }\ \GD \times (0,T), \\
& [\bbC\epsi(\bfu)+\kereps*\bbA\epsi(\bfu_t)
+\kertreps*\textup{tr}\epsi(\bfu_t)\mathbb{I}]\cdot n =\bfh \qquad && \text{on }\ \GN \times (0,T), \\
& (\bfu, \bfu_t) \vert_{t=0} = (0,0)
&& \text{in }\ \Omega.
\end{aligned} \right.
\end{equation}

\medskip

Applying Tikhonov regularization for recovering the unknown kernels from overspecified data, we end up with a PDE-constrained optimization problem of the form \eqref{minJ} under the constraint \eqref{PDEconstr_kersig_zero}.

The derivation from Section~\ref{Sec:Adjoint} easily extends to the case of multiple PDE constraints \eqref{PDEconstr_kersig_zero} as a consequence of considering states resulting from multiple different excitations $\bff^n$, $n=1,\ldots N$, for example $N=2$ with  a pulling and a shearing experiment in order to distinguish between $\kereps$ and $\kertreps$. 
A typical example of a cost function (abbreviating 
$\vec{k}=(
\kereps,\kertreps)$) is then 
\begin{equation}\label{Jdelta}
\begin{aligned}
&J(\vec{k},\bfu_1,\ldots,\bfu_N)  \\
=&\,\frac12 \sum_{n=1}^N \sum_{i=1}^I \int_0^T|\bfu^n(x_i^n,t)-\bfu_i^{n,\textup{meas}}(t)|^2\dt 
+ \gamma_\epsilon \mathcal{R}_\epsilon(\kereps) 
+ \gamma_{\treps} \mathcal{R}_{\treps}(\kertreps)
\end{aligned} 
\end{equation}
for measurements $\bfu_i^{n,\textup{meas}}$ of state $\bfu^n$ at $I$ given points $x_i^n\in\overline{\Omega}$ (typically at the boundary), $i\in\{1,\ldots,I\}$. 
\corrBK{In case of limited regularity of $\bfu^n$ -- note that according to Propositions~\ref{Pro:state:exis_uni1}, \ref{Pro:state:exis_uni2}, we have $\bfu^n\in L^\infty(0,T; \bfH^1_{\textup{D}}(\Omega))$ -- point evaluations in space cannot be justified. We therefore consider a variant that locally averages over a boundary patch $\Gamma_i^n\subseteq\partial\Omega$ concentrated at $x_i^n$ and possibly weighted with some $L^\infty(\Gamma_i^n)$ function $\eta_i^n$: 
\begin{equation}\label{Jeta}
\begin{aligned}
&J(\vec{k},\bfu_1,\ldots,\bfu_N)  \\
=&\,\begin{multlined}[t]\frac12 \sum_{n=1}^N \sum_{i=1}^I \int_0^T|\int_{\Gamma_i^n} \eta_i^n(x)\bfu^n(x,t)\,ds(x)-\bfu_i^{\textup{meas}}(t)|^2\dt 
+ \gamma_\epsilon \mathcal{R}_\epsilon(\kereps) \\
+ \gamma_{\treps} \mathcal{R}_{\treps}(\kertreps) 
\,. \end{multlined}
\end{aligned} 
\end{equation}
For $J$ as in \eqref{Jdelta}, we have  
\[\nabla_uJ(\vec{k},\bfu_1,\ldots,\bfu_N)(x,t)= \sum_{i=1}^I \sum_{n=1}^N (\bfu(x_i,t)-\bfu_i^{\textup{meas}}(t))\delta(x-x_i),\] and for $J$ as in \eqref{Jeta}, we obtain
\[\nabla_uJ(\vec{k},\bfu_1,\ldots,\bfu_N)(x,t)=\sum_{i=1}^I \sum_{n=1}^N (\int_{\Gamma_i^n} \eta_i^n(x)\bfu^n(x,t)\,ds(x)-\bfu_i^{\textup{meas}}(t))\eta_i^n(x)\delta_{\Gamma_i^n}(x),\]
where for any $f\in W^{1,1}(\Omega)$, $\int_\Omega\delta_{\Gamma_i^n}(x)f(x)\, dx =\int_{\Gamma_i^n}(x)f(x)\, ds(x)$; see~\cite[Chapter 15]{Leoni:2009} for well-definedness of the trace.
}

As space for the kernels a canonical choice is $L^p(0,T)$, with $p>1$ close to one, since this space is still reflexive and also applicable to singular kernels as in Proposition~\ref{Pro:state:exis_uni2}. This choice allows for forward solutions that are regular enough to admit locally averaging measurements according to \eqref{Jeta} (but not point measurements as in \eqref{Jdelta} -- for this we would need $\bfu\in L^2(0,T;H^s(\Omega))$ with $s$ large enough so that $H^s(\Omega)$ embeds continuously into $C(\Omega)$).
Thus, well-definedness of the forward operator \[F:L^1(0,T)^2\to L^2(0,T;\mathbb{R}^{NI}),\quad 
\vec{k}\mapsto(t\mapsto(\int_\Omega \eta_i^n(x)\bfu^n(x,t))_{i\in\{1,\ldots,I\},n\in\{1,\ldots,N\}}))\] then allows us to apply regularization theory in (reflexive) Banach spaces; see, e.g., \cite{BurgerOsher04,Scherzeretal2009,SchusterKaltenbacherHofmannKazimierski:2012} and the references therein.
\corrBK{
To keep notation simple, we will restrict ourselves to $N=1$ when deriving the gradient of the reducted cost  function below.
}

In order to define a gradient, we need a Hilbert space setting and thus consider 
a weighted $L^2$ space $X_\epsilon=L^2_{w_{\epsilon}}(0,T)$ with a weight function $w_\epsilon$ that vanishes at $t=0$ (at a certain rate as $t\to0$) if $\kereps$ is expected to have a singularity that is stronger than $t^{-1/2}$. 
The simplest example of a regularization term is then just 
\begin{equation}\label{Rnorm}
\mathcal{R}_\epsilon(\kereps)=\frac12\|\kereps\|_{X_\epsilon}^2
\end{equation} 
but one might also make more sophisticated choices, such as 
\begin{equation}\label{Rinf}
\mathcal{R}_\epsilon(\kereps)=\inf_{\alpha_m\in(0,1),a_m\in\mathbb{R}} \Bigl(\frac12\|\kereps-\sum_{m=1}^M a_m\frac{t^{-\alpha_m}}{\Gamma(1-\alpha_m)}\|_{X}^2
+\beta\sum_{m=1}^M|a_m|\Bigr)
\end{equation} 
to promote closeness to a multi-term fractional derivative kernel with as few components as possible.
As a matter of fact, in our numerical experiments we did not need any regularization term to be added as the problem appears to be only midly ill-posed and so the regularization induced by discretization of the kernel was sufficient to deal with even high noise levels in the data.

Starting from the expression \eqref{redgrad}, we can then define the gradient of the reducted cost function by using the inner product 
$\langle k_1,k_2\rangle_{X_\epsilon}=\int_0^T w_\epsilon(t)k_1(t)k_2(t)\dt$
and the variational equation
\[
\int_0^T w_\epsilon(t)\nabla_{\kereps} j(\vec{k})(t) h(t)\dt
= \frac{\partial j}{\partial\kereps}(\vec{k})h 
\mbox{ for all }h\in X\,.
\]
Due to the fact that
\[
\begin{aligned}
&\frac{\partial\mathcal{L}}{\partial\kereps}(\vec{k},\bfu(\vec{k}),\bfp(\vec{k}))h 
= \int_0^T\int_\Omega (\bbA\epsi(\bfu_t)*\epsi(\olbfp))(T-t)\dxt 
+\gamma_\epsilon \mathcal{R}_\epsilon'(\kereps) h\\
&= \int_0^T h(s)\Bigl(\int_\Omega \rho\tau (\bfu_{tt}*\olbfp_t)(T-s)\,dx +\gamma_\epsilon w_\epsilon(s)\kereps(s)\Bigr)\ds,
\end{aligned}
\]
we have 
\[
\nabla_{\kereps} j(\vec{k})(t) = \gamma_\epsilon\nabla\mathcal{R}_\epsilon(\kereps)(t) 
+\frac{1}{w_\epsilon(t)} \int_\Omega  (\bbA\epsi(\bfu_t)*\epsi(\olbfp))(T-t)\dx,
\]
where the function $\nabla\mathcal{R}_\epsilon(\kereps)(t)$ is determined by the variational equation
\[
\int_0^Tw_\epsilon(t) \nabla\mathcal{R}_\epsilon(\kereps)(t) h(t)\dt
=\mathcal{R}_\epsilon'(\kereps) h \quad \mbox{ for all } h\in L^2_{w_\epsilon}(0,T).
\]
For example, in case \eqref{Rnorm}, 
$\mathcal{R}_\epsilon'(\kereps) h = \langle \kereps,h\rangle_{X_\epsilon}$ and $\nabla\mathcal{R}_\epsilon(\kereps)(t)=\kereps(t)$.

Note that in case of a strongly singular kernel $\kereps$, also the gradient will contain a singularity via the $\dfrac{1}{w_\epsilon(t)}$ term.

Likewise, we obtain
\[
\nabla_{\kertreps} j(\vec{k})(t) = \gamma_{\treps}\nabla\mathcal{R}_{\treps}(\kertreps)(t) 
+\frac{1}{w_{\treps}(t)} \int_\Omega  (\textup{tr}\epsi(\bfu_t)*\textup{tr}\epsi(\olbfp))(T-t)\dx.
\]

\begin{remark}[On uniqueness]
\corrBK{
To discuss uniqueness of identification of two kernels from two additional observations, we   
consider the equivalent reformulation in terms of the deviatoric and hydrostatic parts of the strain 
$$
\rho\bfu_{tt}-\div[\bbC\epsi_d(\bfu)+ \bfy\textup{tr}\epsi(\bfu_t)
+\kereps*\bbA\epsi_d(\bfu_t)+\widetilde{\kertreps}*\textup{tr}\epsi(\bfu_t)]=\bff
$$
with $\bfy=\frac{1}{d}\bbC \mathbb{I}$, $\widetilde{\kertreps}(t)=\kertreps(t) \mathbb{I} + \kereps(t)\bbA \mathbb{I}$ $\in \mbox{Sym}^2(\R^d)$ 
of the PDE in \eqref{PDEconstr_kersig_zero} 
$$
\rho\bfu_{tt}-\div[\bbC\epsi(\bfu)+\kereps*\bbA\epsi(\bfu_t)+\kertreps*\textup{tr}\epsi(\bfu_t)\mathbb{I}]=\bff
$$
and focus on the case of $\bbA$ and $\widetilde{\kertreps}$ being scalar multiples of $\bbC$ and $\bfy$, respectively. 
An example for this is the isotropic case 
\begin{equation}\label{isotropic}
(\bbC)_{ijkl}=2\mu\delta_{ik}\delta_{j\ell}, \ 
\bfy_{ij}=(\lambda+\frac{2\mu}{d})\delta_{ij}, \
(\bbA)_{ijkl}=\delta_{ik}\delta_{j\ell}.
\end{equation}
Additionally we assume to have have a separable source term so that we arrive at the form  
}
\begin{equation}\label{PDEC1C2}
\left \{ \begin{aligned}
&\rho\bfu_{tt}-\div[\bbC_1\epsi(\bfu)+\bbC_2\epsi(\bfu)&&\\ 
&\hspace*{0.5cm}+k_1*\bbC_1\epsi(\bfu_t)+k_2*\bbC_2\epsi(\bfu_t)]
=\ell(t)\bff(x)  && \text{in }\ \Omega \times (0,T),\\
& \bfu =0  && \text{on }\ \GD \times (0,T), \\
& [\bbC_1\epsi(\bfu)+\bbC_2\epsi(\bfu)+k_1*\bbC_1\epsi(\bfu_t)+k_2*\bbC_2\epsi(\bfu_t)]\cdot n =0  && \text{on }\ \GN \times (0,T), \\
& (\bfu, \bfu_t) \vert_{t=0} = (0,0)
&& \text{in }\ \Omega.
\end{aligned} \right.
\end{equation}
Moreover, we consider an idealized setting in which we can excite the system via two different eigenpairs $(\lambda_1,\varphi_1)$, $(\lambda_2,\varphi_2)$ of the operators $-\frac{1}{\rho}\div[\bbC_1\epsi]$ and $-\frac{1}{\rho}\div[\bbC_2\epsi]$, (equipped with mixed Dirichlet-Neumann boundary conditions) that are selfadjoint and positive with respect to the weighted $L^2$ inner product $(\bfv_1,\bfv_2)=\int_\Omega \rho \bfv_1\cdot\bfv_2\, dx$.
We assume the eigenfunctions to be orthogonal to each other as well as normalized with respect to this inner product and take measurements of the displacements resulting from excitations $\bff_i=f_i\rho\varphi_i$, $i=\{1,2\}$  
\[
y^i(t)=B\bfu^{i}(t) \quad t\in(0,T)\, \quad i\in\{1,2\}
\]
(e.g., $B\bfv=\bfv(x_0)$ for some boundary point $x_0$ 
or $B\bfv=\frac{1}{|\Gamma_0|}\int_{\Gamma_0}\bfv(x)\dx$ for some boundary patch $\Gamma_0\subseteq\partial\Omega$). Then testing \eqref{PDEC1C2} with $\varphi_k$, after integration by parts with respect to space and using the eigenvalue equation, as well as orthogonality $(\varphi_1,\varphi_2)=0$ yields
\[
u^i_{tt}+\lambda_i u^i+\lambda_i k_i* u^i_t=\ell(t)f_i \quad t\in(0,T)\, \quad i\in\{1,2\}
\]
for $u^i(t)=(\bfu(t),\varphi_i)$, and due to linearity we have $\bfu^{(i)}(t,x)=u^i(t) \varphi_i(x)$.
Assuming that $T=\infty$, we can apply the Laplace transform to obtain
\[
\mathcal{L}y^i(s) = \frac{f_i \, B\varphi_i \, \mathcal{L} \ell(s) }{s^2+\lambda_i \, s \, \mathcal{L}k_i(s) + \lambda_i}  \quad s\in\mathbb{C} 
\]
from which we further have
\[
\mathcal{L}k_i(s)
 = \frac{1}{\lambda_i\, s}\Bigl(\frac{f_i \, B\varphi_i \, \mathcal{L} \ell(s) }{\mathcal{L}y^i(s)}-s^2- \lambda_i\Bigr) \quad s\in\mathbb{C} 
\]
By injectivity of the Laplace transform, this yields uniqueness of $k_i$, $i=\{1,2\}$.

Of course, this setting is heavily idealized, but can be expected to be achieved -- at least approximately -- in the isotropic case \eqref{isotropic} when excitation is done via longitudinal and shear forces, respectively. This is the setup we consider in our numerical experiments.
\end{remark}

\section{Numerical Simulations}\label{Sec:numerics}

Let us consider a 3-dimensional rectangular viscoelastic beam of the size $1 \times 0.1 \times 0.04$.
The beam is clamped on the left end and excited on the right during the time interval $[0,t_{load}]$ with $t_{load}=0.8$ and then released.
The final time is $T=4$.
In our experiments, we consider two types of load: bending in $(x_1,x_2)$ plane and uniaxial extension ($x_1$-direction), respectively,
\begin{align}
	\bm{B}(t) &= [0,\; B_0\,t/t_{load},\; 0 ], \label{eq:load:bending}
	\\
	\bm{T}(t) &= [T_0\,t/t_{load},\; 0,\; 0], \label{eq:load:extension}
\end{align}
where the magnitudes are $B_0=1$ and $T_0=100$.
The Young's modulus is $10^3$ and the Poisson ratio is $0.3$.
There is no external volumetric force.
The beam is at rest at the initial time.
Thus, the dynamical system is given by the balance equation~\eqref{conservation:momentum} and the constitutive law~\eqref{constitutive:2kernels}:
\begin{equation}
\left \{ \begin{aligned}
&\rho\bfu_{tt} = \div\bfsigma,\\ 
&\bfsigma = \bbC\epsi(\bfu) + \kereps* \epsi_d(\bfu_t) + \kertreps * \bbI\tr\, \epsi(\bfu_t),\\
& \bfu\bigr|_{x_1=0}=0,
\qquad \bfsigma\cdot\bm{n}\bigr|_{x_1=1} = \bm{B}\text{ or }\bm{T}, \\
& (\bfu, \bfu_t, \bfsigma(\bfu)) \vert_{t=0} = (0,0,0).
\end{aligned} \right.
\end{equation}
We employ the Newmark method ($\beta=0.25$, $\gamma=0.5$) for time-stepping in combination with a sum-of-exponentials kernel representation; see, e.g., \cite{lam2020exponential,thiel2021numerical}. More precisely, we are looking for the kernels $k_{\epsi}$, $k_{\treps}$ in the form
\begin{equation}\label{key}
\kereps(t)  = \sum_{k=1}^{m_{\epsi}}w_{\epsi,k} e^{-\lambda_{\epsi,k} t},
\qquad
\kertreps(t) = \sum_{k=1}^{m_{\treps}}w_{\treps,k} e^{-\lambda_{\treps,k} t},
\end{equation}
with unknown weights $w_{\epsi,k}$, $w_{\treps,k}$ and exponents $\lambda_{\epsi,k}$, $\lambda_{\treps,k}$.
For simplicity, we have fixed $m_{\epsi}=m_{\treps}$.
Let us denote by $\theta := \{w_{\epsi,k}, w_{\treps,k}, \lambda_{\epsi,k},\lambda_{\treps,k}\}$ the vector of all learnable parameters.
We assume that the measurements for the tip displacement (more precisely, the mean displacement over the right end face) are available at some discrete time points in the interval $[0,t_{meas}]$ with $t_{meas}=T/2$. We minimize the objective function
\begin{equation}\label{key}
J(\theta) = \frac{1}{2}\|u_{tip}(\theta) - u_{tip}^{meas}\|^2_{[0,t_{meas}]},
\end{equation}
with respect to the parameters $\theta$, using the $\ell_2$-norm on the discretized interval $[0,t_{meas}]$.

We use the FEniCS platform~\cite{FEniCS} for the finite elements implementation of the forward problem and the dolfin-adjoint package~\cite{Mitusch2019} for the adjoint problem in combination with PyTorch~\cite{pytorch} through the Torch-FEniCS interface~\cite{Torch-FEniCS}.
The minimization is performed using the LBFGS method~\cite{wright1999numerical} with the strong Wolfe line search.
The beam is discretized using $60\times10\times5$ linear tetrahedral elements (\Cref{fig:beam}).
The time interval is discretized uniformly with the step size $\Delta t = T/100$.

Here, we have been assuming that the measurement data are given for all discrete time points of the time discretization scheme in the interval $[0,t_{meas}]$.
Moreover, in our synthetic setting, we assume that for a given $\theta_*$, it holds that
\begin{equation}\label{key}
u_{tip}^{meas} = u_{tip}(\theta_*) + \eta,
\end{equation}
where $\eta$ is an additive white Gaussian noise.
We consider in our numerical test cases different noise levels ranging from $2\%$ to $8\%$.

\begin{figure}[!ht]
	\includegraphics[width=\textwidth]{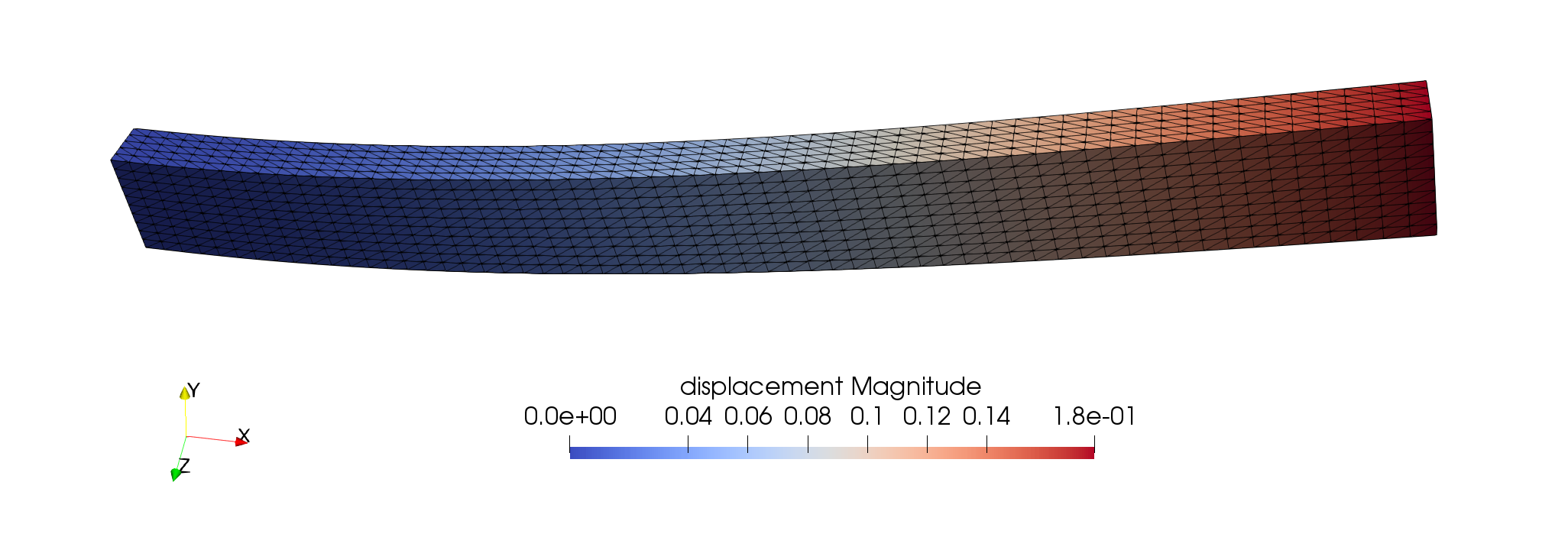}
	\caption{3D beam mesh. Color denotes the displacement magnitude at $t=t_{load}$ computed using the target kernel.}
	\label{fig:beam}
\end{figure}


First, we consider the case when $\kereps(t)=\kertreps(t)=k(t)$.
We define the target kernels $k(t)=k^{true}(t)$ as sum-of-exponentials being an accurate ($22$ modes)  approximation of the fractional kernel $t^{\alpha-1}/\Gamma(\alpha)$ with $\alpha=0.7$.
The expansion is obtained by a rational approximation of the Laplace spectrum using the AAA-algorithm~\cite{nakatsukasa2018aaa}. 
Using this kernel, we numerically generate synthetic measurements of the tip displacement on the interval $[0,t_{meas}]$, using only bending type load~\eqref{eq:load:bending}.
Then, we predict the kernel $k(t)=k^{pred}(t)$ only from these measurements.
To do this, we infer the parameters $\theta = \{w_k, \lambda_k\}$ minimizing $J(\theta)$.
We use a rational approximation (moderate accuracy with $8$ modes) of the fractional kernel $t^{\alpha_0-1}/\Gamma(\alpha_0)$ with $\alpha_0=0.5$ as initial guess.\\
\indent In~\Cref{fig:tip_displacement}, the resulting tip displacements are plotted on the complete interval $[0,T]$ for the models calibrated from noisy data with distinct noise levels.
We observe that fitting the noisy data on a shorter interval quite accurately results in a solution slightly deviating from the truth if considered on a larger time interval.
This can be explained by the fact that the measurement time interval $[0,t_{meas}]$ is not sufficiently large to fit accurately the tail of the kernel. Indeed, in~\Cref{fig:kernels}, where the resulting kernels are compared with the target, we can see that the prediction fits the target at mid-range but is less accurate in the tail.
Besides, we measured in $L^1([\Delta t,t_{meas}])$-norm the error between the target and the predicted kernels: $2\%$ noise - $0.032207$, $4\%$ noise - $0.085839$, $6\%$ noise - $0.144768$, $8\%$ noise - $0.159670$.\\
\indent The convergence of the loss function during the optimization process is depicted in~\Cref{fig:loss_convergence}.
There, we can see that for all the noise level cases, the calibration process converges after $8$ optimization steps.
Besides, the minimum of the loss function grows with the noise level.
The evolution of the energy in time is depicted in~\Cref{fig:energy} for the case of a $2\%$ noise level.
We observe that the energy of the calibrated model fits accurately the truth on the measurement interval $[0,t_{meas}]$.
Moreover, the calibrated kernel allows to predict the following evolution.

\begin{figure}[!ht]
	\begin{subfigure}{0.45\textwidth}
\begin{tikzpicture}

\begin{axis}[
axis background/.style={fill=white!93.3333333333333!black},
axis line style={white!73.7254901960784!black},
legend cell align={left},
legend style={
  fill opacity=0.8,
  draw opacity=1,
  text opacity=1,
  draw=white!80!black,
  fill=white!93.3333333333333!black
},
tick pos=left,
width=\textwidth,
x grid style={white!69.8039215686274!black},
xlabel={\(\displaystyle t\)},
xmajorgrids,
xmin=-0.158, xmax=4.198,
xtick style={color=black},
y grid style={white!69.8039215686274!black},
ylabel={Tip displacement},
ymajorgrids,
ymin=-0.102783378216812, ymax=0.20243957110851,
ytick style={color=black}
]
\addplot [thick, white!50.1960784313725!black]
table {%
0.04 0.000118245622784269
0.08 0.00062121067950784
0.12 0.00174373105008721
0.16 0.00366579387251918
0.2 0.00655908769417644
0.24 0.0105487347275054
0.28 0.0156920213840331
0.32 0.0219895634889271
0.36 0.0294014987415853
0.4 0.0378507035787044
0.44 0.047228726080069
0.48 0.0574053104663192
0.52 0.068238402874897
0.56 0.0795819015147969
0.6 0.09129265513684
0.64 0.103236642151981
0.68 0.115294053777973
0.72 0.127362727389473
0.76 0.139360342893173
0.8 0.151225376588065
0.84 0.160433796798694
0.88 0.163733178588812
0.92 0.161517476784078
0.96 0.154714721016651
1 0.143342868415664
1.04 0.128240714155524
1.08 0.11074571269518
1.12 0.0920327695704013
1.16 0.0729942824753509
1.2 0.0544926342540255
1.24 0.0372996008327564
1.28 0.022013847281947
1.32 0.00903946920135941
1.36 -0.001377168026714
1.4 -0.00913919528502781
1.44 -0.0142887695845025
1.48 -0.0169897318144086
1.52 -0.0174979021033599
1.56 -0.0161372629569025
1.6 -0.013274109190398
1.64 -0.0092952263765727
1.68 -0.004584411320193
1.72 0.000494517097781838
1.76 0.0056132914714162
1.8 0.0104892466516876
1.84 0.0148935271901225
1.88 0.0186536957463515
1.92 0.021654875871796
1.96 0.0238364824132723
2 0.0251879219153225
2.04 0.0257415217661726
2.08 0.0255652817145896
2.12 0.0247540218066475
2.16 0.0234212079779176
2.2 0.0216903764543967
2.24 0.019688012008555
2.28 0.0175365259941899
2.32 0.0153490836798642
2.36 0.0132250654137318
2.4 0.0112473351527346
2.44 0.00948033275987298
2.48 0.00796962967209247
2.52 0.00674217253984115
2.56 0.0058077427299394
2.6 0.0051607149881574
2.64 0.00478264404062891
2.68 0.00464481145333461
2.72 0.00471120411661611
2.76 0.00494114935555268
2.8 0.00529210261169478
2.84 0.00572187157402617
2.88 0.00619066105248366
2.92 0.00666258885662663
2.96 0.00710700953563177
3 0.00749915400923759
3.04 0.00782057021816032
3.08 0.00805908659334177
3.12 0.00820856887746814
3.16 0.0082682296006404
3.2 0.00824199048099535
3.24 0.00813764628892795
3.28 0.00796592824947189
3.32 0.00773941075596619
3.36 0.00747177752283447
3.4 0.00717695559251578
3.44 0.00686831397936787
3.48 0.00655816470722358
3.52 0.00625739477413983
3.56 0.00597491648730938
3.6 0.00571761888721662
3.64 0.00549032011954107
3.68 0.00529579931715787
3.72 0.00513490223048219
3.76 0.00500684264906361
3.8 0.00490941904037512
3.84 0.00483933103517193
3.88 0.00479248729765594
3.92 0.00476433706202038
3.96 0.00475013182360721
4 0.00474519323116127
};
\addlegendentry{initial}
\addplot [thick, red]
table {%
0.04 0.000141403212483305
0.08 0.000727139658382032
0.12 0.0019928765457489
0.16 0.00409305457398032
0.2 0.00720313478372924
0.24 0.011526405647341
0.28 0.017226035342861
0.32 0.0243462668879588
0.36 0.0328363380556222
0.4 0.0426152515055073
0.44 0.0535894573448172
0.48 0.0656303482561864
0.52 0.0785580874274052
0.56 0.0921545102766314
0.6 0.106195534914024
0.64 0.120469334733909
0.68 0.134774754578278
0.72 0.148918868075833
0.76 0.162725050090429
0.8 0.176046612294113
0.84 0.185805595647714
0.88 0.188397072851195
0.92 0.184892552655286
0.96 0.176756741893278
1 0.163421727502714
1.04 0.144186649060192
1.08 0.119774965229377
1.12 0.092624906928037
1.16 0.0648041851193486
1.2 0.0371220709912933
1.24 0.0100497706786711
1.28 -0.0154593412724
1.32 -0.0380860172017209
1.36 -0.0567448982481977
1.4 -0.0710101260412885
1.44 -0.0808495013476328
1.48 -0.0862303627190559
1.52 -0.0871011951960468
1.56 -0.0836093150128209
1.6 -0.0762244838434257
1.64 -0.0656363860710198
1.68 -0.0525651413729985
1.72 -0.0377018150640613
1.76 -0.021759709751901
1.8 -0.00550546573905605
1.84 0.0102974508979605
1.88 0.0249735589912001
1.92 0.0379764947668403
1.96 0.0488749512947745
2 0.0573378780152472
2.04 0.0631459821385958
2.08 0.0662145036531906
2.12 0.0665954934393178
2.16 0.0644575731248965
2.2 0.0600595141228581
2.24 0.0537339632550264
2.28 0.0458754490075709
2.32 0.0369234148547953
2.36 0.0273368747419296
2.4 0.0175692544388485
2.44 0.00804867523110216
2.48 -0.000835653248802475
2.52 -0.00874611208412305
2.56 -0.0154085819339795
2.6 -0.0206219046781395
2.64 -0.0242617088640482
2.68 -0.0262798440590519
2.72 -0.0267004232793765
2.76 -0.0256143766541088
2.8 -0.0231715929183793
2.84 -0.0195708280595693
2.88 -0.0150473431122194
2.92 -0.00986016348652699
2.96 -0.004279310072997
3 0.0014262672984476
3.04 0.00700017038167404
3.08 0.0122087112059183
3.12 0.0168497631512438
3.16 0.0207592803015302
3.2 0.0238158069637339
3.24 0.025942651165665
3.28 0.0271082929497722
3.32 0.0273247620074536
3.36 0.0266442797316815
3.4 0.0251542231344461
3.44 0.0229710068264361
3.48 0.0202330494778292
3.52 0.0170934295205748
3.56 0.0137122863461301
3.6 0.0102495288102531
3.64 0.0068578367112824
3.68 0.00367661942026999
3.72 0.000826865809486706
3.76 -0.00159279647083615
3.8 -0.00350886563335174
3.84 -0.00487442406372409
3.88 -0.00566943910291296
3.92 -0.00589977986175852
3.96 -0.00559527072699511
4 -0.0048067484040753
};
\addlegendentry{predict}
\addplot [thick, blue, dashed]
table {%
0.04 0.000140014946965442
0.08 0.000720146717751033
0.12 0.00197602267077028
0.16 0.00406630075681433
0.2 0.00717070410748714
0.24 0.0114893991065329
0.28 0.0171729726273926
0.32 0.024257281776499
0.36 0.0326987267658206
0.4 0.0424285762379663
0.44 0.0533561451606472
0.48 0.0653477803735181
0.52 0.0782216355847187
0.56 0.0917665500780245
0.6 0.105768777825507
0.64 0.120020810406467
0.68 0.13431974543689
0.72 0.148471557674555
0.76 0.162302327107032
0.8 0.175668855199718
0.84 0.185522230487431
0.88 0.18828550757443
0.92 0.184970468489367
0.96 0.176949786809227
1 0.163648549526998
1.04 0.144513464918043
1.08 0.120465058402529
1.12 0.0938621424309684
1.16 0.0664515627481474
1.2 0.0389259722493751
1.24 0.0119400428918729
1.28 -0.0133907463580785
1.32 -0.0358070142020983
1.36 -0.0544105602562812
1.4 -0.0688506245285249
1.44 -0.0789778438919373
1.48 -0.084635381533005
1.52 -0.085780796079188
1.56 -0.0826372626027194
1.6 -0.0756914754040303
1.64 -0.0655601174438489
1.68 -0.0528848704188393
1.72 -0.0383460996403683
1.76 -0.0226868952813502
1.8 -0.00668067250678404
1.84 0.00893899235555523
1.88 0.0235251969504032
1.92 0.0365269785765645
1.96 0.0474860865811452
2 0.0560516421229767
2.04 0.0619996785684976
2.08 0.0652405704349508
2.12 0.0658084515552972
2.16 0.063845892791152
2.2 0.0595906964641053
2.24 0.0533642771868648
2.28 0.0455538520527164
2.32 0.0365899952850025
2.36 0.0269230692316715
2.4 0.0170034009417749
2.44 0.00726331748549851
2.48 -0.00189933798223835
2.52 -0.0101368920634583
2.56 -0.0171639299778142
2.6 -0.0227654455213741
2.64 -0.0268005958154968
2.68 -0.0292039639538364
2.72 -0.0299836054833175
2.76 -0.029216308487116
2.8 -0.0270396011135873
2.84 -0.0236419008618425
2.88 -0.0192510248631362
2.92 -0.0141221757337677
2.96 -0.00852523851599646
3 -0.00273240247510174
3.04 0.00299372321993861
3.08 0.00841051249141467
3.12 0.0133044487767333
3.16 0.0174982532606038
3.2 0.0208561842723831
3.24 0.0232869690622002
3.28 0.0247448201422215
3.32 0.0252284369980071
3.36 0.024778225818181
3.4 0.0234717489664883
3.44 0.021418097623957
3.48 0.0187510972575327
3.52 0.01562205134893
3.56 0.0121921386114816
3.6 0.00862492875073749
3.64 0.00507921253767717
3.68 0.00170264485593567
3.72 -0.00137393298059126
3.76 -0.00404094126616981
3.8 -0.00621333745426983
3.84 -0.00783261341346062
3.88 -0.00886760117944158
3.92 -0.00931399360396492
3.96 -0.00919282589177759
4 -0.00854787571613503
};
\addlegendentry{truth}
\addplot [thick, black, dotted, mark=*, mark size=1, mark options={solid}]
table {%
0.04 0.00263455839575781
0.08 -0.002768834264949
0.12 0.00471665531148722
0.16 0.000819980442550852
0.2 0.00745865425091229
0.24 0.00541509551713187
0.28 0.0195955817378688
0.32 0.0221057928620117
0.36 0.0354855076615275
0.4 0.0407406194509883
0.44 0.054256087803833
0.48 0.0708944697672547
0.52 0.0779018478469223
0.56 0.0902436968352044
0.6 0.103912436507673
0.64 0.120331015659519
0.68 0.136297062207103
0.72 0.154469208660415
0.76 0.163846227630237
0.8 0.176918167176307
0.84 0.180600852687397
0.88 0.188565800684632
0.92 0.185405983077198
0.96 0.17696610601582
1 0.164644157589573
1.04 0.148431011685209
1.08 0.116062241869832
1.12 0.0898161018182711
1.16 0.0695625062745938
1.2 0.0374304940373879
1.24 0.00327812522441662
1.28 -0.0134804124349707
1.32 -0.0414345625533668
1.36 -0.0583063441177873
1.4 -0.0719437028006026
1.44 -0.0726716039230485
1.48 -0.0889096077929335
1.52 -0.084660514015973
1.56 -0.0793370163362308
1.6 -0.0791853933634988
1.64 -0.0639074255851893
1.68 -0.0580042547991485
1.72 -0.0342074043012132
1.76 -0.0246905562637477
1.8 -0.00617963819324681
1.84 0.00713240342334817
1.88 0.0285898504144725
1.92 0.0395600243399998
1.96 0.0472582098787436
2 0.0583131871358988
};
\addlegendentry{data}
\end{axis}

\end{tikzpicture}
		\caption{Noise level $2\%$}
	\end{subfigure}
	\hfill	
	\begin{subfigure}{0.45\textwidth}
\begin{tikzpicture}

\begin{axis}[
axis background/.style={fill=white!93.3333333333333!black},
axis line style={white!73.7254901960784!black},
legend cell align={left},
legend style={
  fill opacity=0.8,
  draw opacity=1,
  text opacity=1,
  draw=white!80!black,
  fill=white!93.3333333333333!black
},
tick pos=left,
width=\textwidth,
x grid style={white!69.8039215686274!black},
xlabel={\(\displaystyle t\)},
xmajorgrids,
xmin=-0.158, xmax=4.198,
xtick style={color=black},
y grid style={white!69.8039215686274!black},
ylabel={Tip displacement},
ymajorgrids,
ymin=-0.104290375265822, ymax=0.20407755483203,
ytick style={color=black}
]
\addplot [thick, white!50.1960784313725!black]
table {%
0.04 0.000118245622784269
0.08 0.00062121067950784
0.12 0.00174373105008721
0.16 0.00366579387251918
0.2 0.00655908769417644
0.24 0.0105487347275054
0.28 0.0156920213840331
0.32 0.0219895634889271
0.36 0.0294014987415853
0.4 0.0378507035787044
0.44 0.047228726080069
0.48 0.0574053104663192
0.52 0.068238402874897
0.56 0.0795819015147969
0.6 0.09129265513684
0.64 0.103236642151981
0.68 0.115294053777973
0.72 0.127362727389473
0.76 0.139360342893173
0.8 0.151225376588065
0.84 0.160433796798694
0.88 0.163733178588812
0.92 0.161517476784078
0.96 0.154714721016651
1 0.143342868415664
1.04 0.128240714155524
1.08 0.11074571269518
1.12 0.0920327695704013
1.16 0.0729942824753509
1.2 0.0544926342540255
1.24 0.0372996008327564
1.28 0.022013847281947
1.32 0.00903946920135941
1.36 -0.001377168026714
1.4 -0.00913919528502781
1.44 -0.0142887695845025
1.48 -0.0169897318144086
1.52 -0.0174979021033599
1.56 -0.0161372629569025
1.6 -0.013274109190398
1.64 -0.0092952263765727
1.68 -0.004584411320193
1.72 0.000494517097781838
1.76 0.0056132914714162
1.8 0.0104892466516876
1.84 0.0148935271901225
1.88 0.0186536957463515
1.92 0.021654875871796
1.96 0.0238364824132723
2 0.0251879219153225
2.04 0.0257415217661726
2.08 0.0255652817145896
2.12 0.0247540218066475
2.16 0.0234212079779176
2.2 0.0216903764543967
2.24 0.019688012008555
2.28 0.0175365259941899
2.32 0.0153490836798642
2.36 0.0132250654137318
2.4 0.0112473351527346
2.44 0.00948033275987298
2.48 0.00796962967209247
2.52 0.00674217253984115
2.56 0.0058077427299394
2.6 0.0051607149881574
2.64 0.00478264404062891
2.68 0.00464481145333461
2.72 0.00471120411661611
2.76 0.00494114935555268
2.8 0.00529210261169478
2.84 0.00572187157402617
2.88 0.00619066105248366
2.92 0.00666258885662663
2.96 0.00710700953563177
3 0.00749915400923759
3.04 0.00782057021816032
3.08 0.00805908659334177
3.12 0.00820856887746814
3.16 0.0082682296006404
3.2 0.00824199048099535
3.24 0.00813764628892795
3.28 0.00796592824947189
3.32 0.00773941075596619
3.36 0.00747177752283447
3.4 0.00717695559251578
3.44 0.00686831397936787
3.48 0.00655816470722358
3.52 0.00625739477413983
3.56 0.00597491648730938
3.6 0.00571761888721662
3.64 0.00549032011954107
3.68 0.00529579931715787
3.72 0.00513490223048219
3.76 0.00500684264906361
3.8 0.00490941904037512
3.84 0.00483933103517193
3.88 0.00479248729765594
3.92 0.00476433706202038
3.96 0.00475013182360721
4 0.00474519323116127
};
\addlegendentry{initial}
\addplot [thick, red]
table {%
0.04 0.000143549636164334
0.08 0.000736364621017585
0.12 0.00201352165316616
0.16 0.00412600129892021
0.2 0.00724589903296464
0.24 0.011581374475406
0.28 0.0173092898162955
0.32 0.0244812438492765
0.36 0.0330406185529733
0.4 0.0428990262637787
0.44 0.0539638022068975
0.48 0.0661127066176821
0.52 0.079168594325316
0.56 0.0929067496893584
0.6 0.107093345848913
0.64 0.1215122425935
0.68 0.135962843336045
0.72 0.150251688984689
0.76 0.164197380310327
0.8 0.177646983846285
0.84 0.187474584752092
0.88 0.190060830736673
0.92 0.186547117907172
0.96 0.178458962844433
1 0.165229231843129
1.04 0.145988429931061
1.08 0.121273071339917
1.12 0.0936438321555338
1.16 0.0654401738131413
1.2 0.0375099116278114
1.24 0.0101584101423984
1.28 -0.0157759467951273
1.32 -0.0389037047503858
1.36 -0.0579502218585684
1.4 -0.0724189491638711
1.44 -0.0823853200921954
1.48 -0.0879188042584641
1.52 -0.088950666748365
1.56 -0.085543771790919
1.6 -0.0781377069948307
1.64 -0.0674921223679884
1.68 -0.0544129410453594
1.72 -0.0396102252561249
1.76 -0.0237670304308738
1.8 -0.00763829417843325
1.84 0.00797864499886284
1.88 0.0223650185943867
1.92 0.0349672007355288
1.96 0.0453813757360393
2 0.0533036463350761
2.04 0.0585225380062685
2.08 0.0609557069080356
2.12 0.0606741162717506
2.16 0.0578830868726836
2.2 0.0528785173165565
2.24 0.0460179367572134
2.28 0.0377122694509837
2.32 0.0284188377873272
2.36 0.0186176538338877
2.4 0.00877862561702467
2.44 -0.000665004698136279
2.48 -0.00932920148408594
2.52 -0.0168863734407116
2.56 -0.0230747989705117
2.6 -0.0277102183048893
2.64 -0.0306918975867496
2.68 -0.0320007534280247
2.72 -0.0316916595125951
2.76 -0.0298852866573318
2.8 -0.0267598483879854
2.84 -0.0225411877010454
2.88 -0.0174893728825707
2.92 -0.0118838410968814
2.96 -0.00600896769911867
3 -0.000141879994012869
3.04 0.0054585809598718
3.08 0.0105619379774132
3.12 0.0149750253398926
3.16 0.0185483234549525
3.2 0.0211795792354285
3.24 0.0228146952748893
3.28 0.0234469535981619
3.32 0.0231143474180112
3.36 0.0218952020293435
3.4 0.0199019309860128
3.44 0.017273661483339
3.48 0.0141682127569404
3.52 0.0107540643334667
3.56 0.00720224703858392
3.6 0.00367867431515661
3.64 0.000336978381917746
3.68 -0.00268743110786547
3.72 -0.00528215499072595
3.76 -0.00736117346931019
3.8 -0.00886690316139874
3.84 -0.00977086536421898
3.88 -0.0100731034032091
3.92 -0.00980024211928659
3.96 -0.00900257518975153
4 -0.00775021497388213
};
\addlegendentry{predict}
\addplot [thick, blue, dashed]
table {%
0.04 0.000140014946965442
0.08 0.000720146717751033
0.12 0.00197602267077028
0.16 0.00406630075681433
0.2 0.00717070410748714
0.24 0.0114893991065329
0.28 0.0171729726273926
0.32 0.024257281776499
0.36 0.0326987267658206
0.4 0.0424285762379663
0.44 0.0533561451606472
0.48 0.0653477803735181
0.52 0.0782216355847187
0.56 0.0917665500780245
0.6 0.105768777825507
0.64 0.120020810406467
0.68 0.13431974543689
0.72 0.148471557674555
0.76 0.162302327107032
0.8 0.175668855199718
0.84 0.185522230487431
0.88 0.18828550757443
0.92 0.184970468489367
0.96 0.176949786809227
1 0.163648549526998
1.04 0.144513464918043
1.08 0.120465058402529
1.12 0.0938621424309684
1.16 0.0664515627481474
1.2 0.0389259722493751
1.24 0.0119400428918729
1.28 -0.0133907463580785
1.32 -0.0358070142020983
1.36 -0.0544105602562812
1.4 -0.0688506245285249
1.44 -0.0789778438919373
1.48 -0.084635381533005
1.52 -0.085780796079188
1.56 -0.0826372626027194
1.6 -0.0756914754040303
1.64 -0.0655601174438489
1.68 -0.0528848704188393
1.72 -0.0383460996403683
1.76 -0.0226868952813502
1.8 -0.00668067250678404
1.84 0.00893899235555523
1.88 0.0235251969504032
1.92 0.0365269785765645
1.96 0.0474860865811452
2 0.0560516421229767
2.04 0.0619996785684976
2.08 0.0652405704349508
2.12 0.0658084515552972
2.16 0.063845892791152
2.2 0.0595906964641053
2.24 0.0533642771868648
2.28 0.0455538520527164
2.32 0.0365899952850025
2.36 0.0269230692316715
2.4 0.0170034009417749
2.44 0.00726331748549851
2.48 -0.00189933798223835
2.52 -0.0101368920634583
2.56 -0.0171639299778142
2.6 -0.0227654455213741
2.64 -0.0268005958154968
2.68 -0.0292039639538364
2.72 -0.0299836054833175
2.76 -0.029216308487116
2.8 -0.0270396011135873
2.84 -0.0236419008618425
2.88 -0.0192510248631362
2.92 -0.0141221757337677
2.96 -0.00852523851599646
3 -0.00273240247510174
3.04 0.00299372321993861
3.08 0.00841051249141467
3.12 0.0133044487767333
3.16 0.0174982532606038
3.2 0.0208561842723831
3.24 0.0232869690622002
3.28 0.0247448201422215
3.32 0.0252284369980071
3.36 0.024778225818181
3.4 0.0234717489664883
3.44 0.021418097623957
3.48 0.0187510972575327
3.52 0.01562205134893
3.56 0.0121921386114816
3.6 0.00862492875073749
3.64 0.00507921253767717
3.68 0.00170264485593567
3.72 -0.00137393298059126
3.76 -0.00404094126616981
3.8 -0.00621333745426983
3.84 -0.00783261341346062
3.88 -0.00886760117944158
3.92 -0.00931399360396492
3.96 -0.00919282589177759
4 -0.00854787571613503
};
\addlegendentry{truth}
\addplot [thick, black, dotted, mark=*, mark size=1, mark options={solid}]
table {%
0.04 0.0166137947147885
0.08 -0.00162425060283778
0.12 -0.0040255236280863
0.16 0.00481024322941967
0.2 0.00673834974091081
0.24 0.0057428517145312
0.28 0.0168522929675824
0.32 0.0254690678594992
0.36 0.0296902696599192
0.4 0.0544936713446367
0.44 0.0614284258588205
0.48 0.0723830481423409
0.52 0.0835422061208028
0.56 0.0865390578861302
0.6 0.0842668657692423
0.64 0.11946044507611
0.68 0.135658995779456
0.72 0.145474197971888
0.76 0.172511756063818
0.8 0.185903588616642
0.84 0.188800981236581
0.88 0.180977811961572
0.92 0.181324674238366
0.96 0.178026151981149
1 0.170058442524249
1.04 0.137391213867391
1.08 0.139394293340992
1.12 0.0987708818637127
1.16 0.0484863693290145
1.2 0.0372231293858098
1.24 0.0185613481216107
1.28 -0.0111871704420976
1.32 -0.042664872747447
1.36 -0.067041571834822
1.4 -0.0669125837529158
1.44 -0.0902736511704655
1.48 -0.0781234368501319
1.52 -0.0880618548646067
1.56 -0.0853337996076662
1.6 -0.0881056724819281
1.64 -0.0669944000766336
1.68 -0.0490322679772017
1.72 -0.0323493746450829
1.76 -0.0327407597753244
1.8 0.000371025798785967
1.84 0.00138215111697797
1.88 0.0267238666104581
1.92 0.024910929001025
1.96 0.050870266217967
2 0.054361324356334
};
\addlegendentry{data}
\end{axis}

\end{tikzpicture}
		\caption{Noise level $4\%$}
	\end{subfigure}	

	\vspace{4ex}
	
	\begin{subfigure}{0.45\textwidth}
\begin{tikzpicture}

\begin{axis}[
axis background/.style={fill=white!93.3333333333333!black},
axis line style={white!73.7254901960784!black},
legend cell align={left},
legend style={
  fill opacity=0.8,
  draw opacity=1,
  text opacity=1,
  draw=white!80!black,
  fill=white!93.3333333333333!black
},
tick pos=left,
width=\textwidth,
x grid style={white!69.8039215686274!black},
xlabel={\(\displaystyle t\)},
xmajorgrids,
xmin=-0.158, xmax=4.198,
xtick style={color=black},
y grid style={white!69.8039215686274!black},
ylabel={Tip displacement},
ymajorgrids,
ymin=-0.107298984180571, ymax=0.213198865819995,
ytick style={color=black}
]
\addplot [thick, white!50.1960784313725!black]
table {%
0.04 0.000118245622784269
0.08 0.00062121067950784
0.12 0.00174373105008721
0.16 0.00366579387251918
0.2 0.00655908769417644
0.24 0.0105487347275054
0.28 0.0156920213840331
0.32 0.0219895634889271
0.36 0.0294014987415853
0.4 0.0378507035787044
0.44 0.047228726080069
0.48 0.0574053104663192
0.52 0.068238402874897
0.56 0.0795819015147969
0.6 0.09129265513684
0.64 0.103236642151981
0.68 0.115294053777973
0.72 0.127362727389473
0.76 0.139360342893173
0.8 0.151225376588065
0.84 0.160433796798694
0.88 0.163733178588812
0.92 0.161517476784078
0.96 0.154714721016651
1 0.143342868415664
1.04 0.128240714155524
1.08 0.11074571269518
1.12 0.0920327695704013
1.16 0.0729942824753509
1.2 0.0544926342540255
1.24 0.0372996008327564
1.28 0.022013847281947
1.32 0.00903946920135941
1.36 -0.001377168026714
1.4 -0.00913919528502781
1.44 -0.0142887695845025
1.48 -0.0169897318144086
1.52 -0.0174979021033599
1.56 -0.0161372629569025
1.6 -0.013274109190398
1.64 -0.0092952263765727
1.68 -0.004584411320193
1.72 0.000494517097781838
1.76 0.0056132914714162
1.8 0.0104892466516876
1.84 0.0148935271901225
1.88 0.0186536957463515
1.92 0.021654875871796
1.96 0.0238364824132723
2 0.0251879219153225
2.04 0.0257415217661726
2.08 0.0255652817145896
2.12 0.0247540218066475
2.16 0.0234212079779176
2.2 0.0216903764543967
2.24 0.019688012008555
2.28 0.0175365259941899
2.32 0.0153490836798642
2.36 0.0132250654137318
2.4 0.0112473351527346
2.44 0.00948033275987298
2.48 0.00796962967209247
2.52 0.00674217253984115
2.56 0.0058077427299394
2.6 0.0051607149881574
2.64 0.00478264404062891
2.68 0.00464481145333461
2.72 0.00471120411661611
2.76 0.00494114935555268
2.8 0.00529210261169478
2.84 0.00572187157402617
2.88 0.00619066105248366
2.92 0.00666258885662663
2.96 0.00710700953563177
3 0.00749915400923759
3.04 0.00782057021816032
3.08 0.00805908659334177
3.12 0.00820856887746814
3.16 0.0082682296006404
3.2 0.00824199048099535
3.24 0.00813764628892795
3.28 0.00796592824947189
3.32 0.00773941075596619
3.36 0.00747177752283447
3.4 0.00717695559251578
3.44 0.00686831397936787
3.48 0.00655816470722358
3.52 0.00625739477413983
3.56 0.00597491648730938
3.6 0.00571761888721662
3.64 0.00549032011954107
3.68 0.00529579931715787
3.72 0.00513490223048219
3.76 0.00500684264906361
3.8 0.00490941904037512
3.84 0.00483933103517193
3.88 0.00479248729765594
3.92 0.00476433706202038
3.96 0.00475013182360721
4 0.00474519323116127
};
\addlegendentry{initial}
\addplot [thick, red]
table {%
0.04 0.000141144856410368
0.08 0.000725672174315022
0.12 0.00198858537681626
0.16 0.0040842105907112
0.2 0.00718838990203087
0.24 0.0115041628817007
0.28 0.0171925087434446
0.32 0.0242943203300172
0.36 0.0327563934126299
0.4 0.0424965623515762
0.44 0.0534200187503029
0.48 0.0653959315074339
0.52 0.078241909534846
0.56 0.0917382571784026
0.6 0.105660824058266
0.64 0.11979835385553
0.68 0.133950194719219
0.72 0.147924041281309
0.76 0.161544591469391
0.8 0.17466735396613
0.84 0.184222349571502
0.88 0.186616347027425
0.92 0.18292290177674
0.96 0.174597635967269
1 0.161064017621246
1.04 0.141635233951994
1.08 0.117076146647403
1.12 0.0898512244147006
1.16 0.0620114513389819
1.2 0.0343401865590437
1.24 0.00730941510915596
1.28 -0.0181088612569975
1.32 -0.0405894552323116
1.36 -0.0590694251807083
1.4 -0.0731558321041581
1.44 -0.0828334557648619
1.48 -0.0880716985190713
1.52 -0.0888251235103745
1.56 -0.0852596114588255
1.6 -0.0778662542990371
1.64 -0.06734480992418
1.68 -0.0544129077119336
1.72 -0.0397562634302068
1.76 -0.02408581414567
1.8 -0.00816487689900359
1.84 0.00725363293264797
1.88 0.0215122777600197
1.92 0.0340849457147619
1.96 0.0445586559680516
2 0.0526193199340001
2.04 0.0580654344358553
2.08 0.0608306017523714
2.12 0.0609830260077082
2.16 0.0587028315593467
2.2 0.0542554885738363
2.24 0.0479767200382248
2.28 0.0402608270664313
2.32 0.0315428377132281
2.36 0.0222726531970992
2.4 0.0128905505228398
2.44 0.00380872802212489
2.48 -0.00460131440961205
2.52 -0.0120209394238474
2.56 -0.0181957311349107
2.6 -0.0229440186693998
2.64 -0.0261595308611501
2.68 -0.0278099026309769
2.72 -0.0279321847774732
2.76 -0.0266270236741882
2.8 -0.024050438266147
2.84 -0.0204033953198358
2.88 -0.0159194086224311
2.92 -0.0108520244363827
2.96 -0.00546240597382699
3 -7.85887507122509e-06
3.04 0.00526912529787977
3.08 0.0101504743887287
3.12 0.0144505368034553
3.16 0.0180217794996069
3.2 0.0207586106849277
3.24 0.0225988903934062
3.28 0.0235237425515386
3.32 0.0235554237674779
3.36 0.0227535745948298
3.4 0.0212099509858407
3.44 0.0190422090858729
3.48 0.0163869561793244
3.52 0.0133926242216441
3.56 0.0102121424254837
3.6 0.00699603177678566
3.64 0.0038859126790441
3.68 0.0010090284863605
3.72 -0.00152631775583427
3.76 -0.00363414247480506
3.8 -0.00525299073976253
3.84 -0.00634688716849001
3.88 -0.0069052122552593
3.92 -0.00694135856196324
3.96 -0.00649053104792738
4 -0.00560665794392448
};
\addlegendentry{predict}
\addplot [thick, blue, dashed]
table {%
0.04 0.000140014946965442
0.08 0.000720146717751033
0.12 0.00197602267077028
0.16 0.00406630075681433
0.2 0.00717070410748714
0.24 0.0114893991065329
0.28 0.0171729726273926
0.32 0.024257281776499
0.36 0.0326987267658206
0.4 0.0424285762379663
0.44 0.0533561451606472
0.48 0.0653477803735181
0.52 0.0782216355847187
0.56 0.0917665500780245
0.6 0.105768777825507
0.64 0.120020810406467
0.68 0.13431974543689
0.72 0.148471557674555
0.76 0.162302327107032
0.8 0.175668855199718
0.84 0.185522230487431
0.88 0.18828550757443
0.92 0.184970468489367
0.96 0.176949786809227
1 0.163648549526998
1.04 0.144513464918043
1.08 0.120465058402529
1.12 0.0938621424309684
1.16 0.0664515627481474
1.2 0.0389259722493751
1.24 0.0119400428918729
1.28 -0.0133907463580785
1.32 -0.0358070142020983
1.36 -0.0544105602562812
1.4 -0.0688506245285249
1.44 -0.0789778438919373
1.48 -0.084635381533005
1.52 -0.085780796079188
1.56 -0.0826372626027194
1.6 -0.0756914754040303
1.64 -0.0655601174438489
1.68 -0.0528848704188393
1.72 -0.0383460996403683
1.76 -0.0226868952813502
1.8 -0.00668067250678404
1.84 0.00893899235555523
1.88 0.0235251969504032
1.92 0.0365269785765645
1.96 0.0474860865811452
2 0.0560516421229767
2.04 0.0619996785684976
2.08 0.0652405704349508
2.12 0.0658084515552972
2.16 0.063845892791152
2.2 0.0595906964641053
2.24 0.0533642771868648
2.28 0.0455538520527164
2.32 0.0365899952850025
2.36 0.0269230692316715
2.4 0.0170034009417749
2.44 0.00726331748549851
2.48 -0.00189933798223835
2.52 -0.0101368920634583
2.56 -0.0171639299778142
2.6 -0.0227654455213741
2.64 -0.0268005958154968
2.68 -0.0292039639538364
2.72 -0.0299836054833175
2.76 -0.029216308487116
2.8 -0.0270396011135873
2.84 -0.0236419008618425
2.88 -0.0192510248631362
2.92 -0.0141221757337677
2.96 -0.00852523851599646
3 -0.00273240247510174
3.04 0.00299372321993861
3.08 0.00841051249141467
3.12 0.0133044487767333
3.16 0.0174982532606038
3.2 0.0208561842723831
3.24 0.0232869690622002
3.28 0.0247448201422215
3.32 0.0252284369980071
3.36 0.024778225818181
3.4 0.0234717489664883
3.44 0.021418097623957
3.48 0.0187510972575327
3.52 0.01562205134893
3.56 0.0121921386114816
3.6 0.00862492875073749
3.64 0.00507921253767717
3.68 0.00170264485593567
3.72 -0.00137393298059126
3.76 -0.00404094126616981
3.8 -0.00621333745426983
3.84 -0.00783261341346062
3.88 -0.00886760117944158
3.92 -0.00931399360396492
3.96 -0.00919282589177759
4 -0.00854787571613503
};
\addlegendentry{truth}
\addplot [thick, black, dotted, mark=*, mark size=1, mark options={solid}]
table {%
0.04 0.00603631728632963
0.08 -0.0077733965877598
0.12 -0.00735574203750644
0.16 -0.00961473483717689
0.2 -0.00540529462381191
0.24 0.028459615358213
0.28 0.0271095225973211
0.32 0.0343188236358414
0.36 0.0557490785736827
0.4 0.0585238466639939
0.44 0.0557583586579828
0.48 0.0655808807190827
0.52 0.0683667847695446
0.56 0.0887949686025835
0.6 0.0882065646055279
0.64 0.0962734810437313
0.68 0.153211353400852
0.72 0.167229891732418
0.76 0.170229998947223
0.8 0.172197493706253
0.84 0.195691181159531
0.88 0.19863078172906
0.92 0.181760862615536
0.96 0.157082377742558
1 0.14952341584979
1.04 0.141545123071201
1.08 0.0979208654468057
1.12 0.0911592613391374
1.16 0.0700203576660439
1.2 0.0398172888386467
1.24 0.00861767201656329
1.28 -0.00254544149650058
1.32 -0.0472018561482901
1.36 -0.0603892811059559
1.4 -0.0733402764396044
1.44 -0.0821991823486962
1.48 -0.0923266052562572
1.52 -0.0863482126061715
1.56 -0.0927309000896363
1.6 -0.0740343646660173
1.64 -0.0553679615624526
1.68 -0.0492167257573037
1.72 -0.065423370966397
1.76 -0.0279021800830455
1.8 0.00597484786477334
1.84 0.00612327551472035
1.88 0.0346566366478349
1.92 0.014806321408206
1.96 0.0538074193918866
2 0.0512712068974447
};
\addlegendentry{data}
\end{axis}

\end{tikzpicture}
		\caption{Noise level $6\%$}
	\end{subfigure}
	\hfill
	\begin{subfigure}{0.45\textwidth}
\begin{tikzpicture}

\begin{axis}[
axis background/.style={fill=white!93.3333333333333!black},
axis line style={white!73.7254901960784!black},
legend cell align={left},
legend style={
  fill opacity=0.8,
  draw opacity=1,
  text opacity=1,
  draw=white!80!black,
  fill=white!93.3333333333333!black
},
tick pos=left,
width=\textwidth,
x grid style={white!69.8039215686274!black},
xlabel={\(\displaystyle t\)},
xmajorgrids,
xmin=-0.158, xmax=4.198,
xtick style={color=black},
y grid style={white!69.8039215686274!black},
ylabel={Tip displacement},
ymajorgrids,
ymin=-0.127832993423404, ymax=0.215203167063198,
ytick style={color=black}
]
\addplot [thick, white!50.1960784313725!black]
table {%
0.04 0.000118245622784269
0.08 0.00062121067950784
0.12 0.00174373105008721
0.16 0.00366579387251918
0.2 0.00655908769417644
0.24 0.0105487347275054
0.28 0.0156920213840331
0.32 0.0219895634889271
0.36 0.0294014987415853
0.4 0.0378507035787044
0.44 0.047228726080069
0.48 0.0574053104663192
0.52 0.068238402874897
0.56 0.0795819015147969
0.6 0.09129265513684
0.64 0.103236642151981
0.68 0.115294053777973
0.72 0.127362727389473
0.76 0.139360342893173
0.8 0.151225376588065
0.84 0.160433796798694
0.88 0.163733178588812
0.92 0.161517476784078
0.96 0.154714721016651
1 0.143342868415664
1.04 0.128240714155524
1.08 0.11074571269518
1.12 0.0920327695704013
1.16 0.0729942824753509
1.2 0.0544926342540255
1.24 0.0372996008327564
1.28 0.022013847281947
1.32 0.00903946920135941
1.36 -0.001377168026714
1.4 -0.00913919528502781
1.44 -0.0142887695845025
1.48 -0.0169897318144086
1.52 -0.0174979021033599
1.56 -0.0161372629569025
1.6 -0.013274109190398
1.64 -0.0092952263765727
1.68 -0.004584411320193
1.72 0.000494517097781838
1.76 0.0056132914714162
1.8 0.0104892466516876
1.84 0.0148935271901225
1.88 0.0186536957463515
1.92 0.021654875871796
1.96 0.0238364824132723
2 0.0251879219153225
2.04 0.0257415217661726
2.08 0.0255652817145896
2.12 0.0247540218066475
2.16 0.0234212079779176
2.2 0.0216903764543967
2.24 0.019688012008555
2.28 0.0175365259941899
2.32 0.0153490836798642
2.36 0.0132250654137318
2.4 0.0112473351527346
2.44 0.00948033275987298
2.48 0.00796962967209247
2.52 0.00674217253984115
2.56 0.0058077427299394
2.6 0.0051607149881574
2.64 0.00478264404062891
2.68 0.00464481145333461
2.72 0.00471120411661611
2.76 0.00494114935555268
2.8 0.00529210261169478
2.84 0.00572187157402617
2.88 0.00619066105248366
2.92 0.00666258885662663
2.96 0.00710700953563177
3 0.00749915400923759
3.04 0.00782057021816032
3.08 0.00805908659334177
3.12 0.00820856887746814
3.16 0.0082682296006404
3.2 0.00824199048099535
3.24 0.00813764628892795
3.28 0.00796592824947189
3.32 0.00773941075596619
3.36 0.00747177752283447
3.4 0.00717695559251578
3.44 0.00686831397936787
3.48 0.00655816470722358
3.52 0.00625739477413983
3.56 0.00597491648730938
3.6 0.00571761888721662
3.64 0.00549032011954107
3.68 0.00529579931715787
3.72 0.00513490223048219
3.76 0.00500684264906361
3.8 0.00490941904037512
3.84 0.00483933103517193
3.88 0.00479248729765594
3.92 0.00476433706202038
3.96 0.00475013182360721
4 0.00474519323116127
};
\addlegendentry{initial}
\addplot [thick, red]
table {%
0.04 0.000148799464290964
0.08 0.000754310923590242
0.12 0.00204593899930854
0.16 0.00416864998873721
0.2 0.00729147821684869
0.24 0.0116417128028101
0.28 0.0174253721912796
0.32 0.024685821583719
0.36 0.0333323859942084
0.4 0.0432737467577764
0.44 0.0544450706089099
0.48 0.0667466455376869
0.52 0.0799986422995621
0.56 0.0939467968306316
0.6 0.108341592681016
0.64 0.122987021759197
0.68 0.137699969843172
0.72 0.152278473690736
0.76 0.166517026713249
0.8 0.180244885340938
0.84 0.190231985450895
0.88 0.192930279626173
0.92 0.189638130221019
0.96 0.181875319771997
1 0.169007793721586
1.04 0.149708355101188
1.08 0.124296317236353
1.12 0.0960930358991264
1.16 0.0679834546815379
1.2 0.0401967012396269
1.24 0.0124079826240784
1.28 -0.0144437218605323
1.32 -0.0384479661014705
1.36 -0.0577553252668092
1.4 -0.0721226855571644
1.44 -0.0823478418617935
1.48 -0.0884474077401024
1.52 -0.0898117315725183
1.56 -0.0861610232579806
1.6 -0.0780381059789585
1.64 -0.0666824041238157
1.68 -0.0530348999932495
1.72 -0.0374257887981553
1.76 -0.0202763006530843
1.8 -0.002491419539767
1.84 0.0147960728411779
1.88 0.030698531799618
1.92 0.0448096925117412
1.96 0.0569043265423458
2 0.0665550401427423
2.04 0.0732758474582262
2.08 0.0768530941506311
2.12 0.0774199125613419
2.16 0.0752952227035777
2.2 0.0707367772195707
2.24 0.0639434410907503
2.28 0.0552594841789084
2.32 0.0452271951274928
2.36 0.0344476868538137
2.4 0.0234319258548328
2.44 0.0125802228548513
2.48 0.00228153237080365
2.52 -0.00703099010736552
2.56 -0.0149628092844162
2.6 -0.0212574164139775
2.64 -0.0257969678323103
2.68 -0.0285379554009976
2.72 -0.0294633355291945
2.76 -0.0286089364813709
2.8 -0.0261157534501017
2.84 -0.0222225309199075
2.88 -0.0172087527230856
2.92 -0.0113533452947846
2.96 -0.00493674636236043
3 0.00173763880140831
3.04 0.00834975915601158
3.08 0.0146020532677084
3.12 0.0202503618058956
3.16 0.025102262106313
3.2 0.0290021804840345
3.24 0.0318296998136124
3.28 0.0335158749009078
3.32 0.0340564225999805
3.36 0.0335040174075833
3.4 0.0319495507645833
3.44 0.0295118005842874
3.48 0.0263381633026351
3.52 0.0226051501836511
3.56 0.0185077243822864
3.6 0.0142418674761212
3.64 0.00999275238486038
3.68 0.00593225708137125
3.72 0.0022177076964998
3.76 -0.00101438017640992
3.8 -0.00365759589096993
3.84 -0.00564073471882618
3.88 -0.00692611896248591
3.92 -0.00750562893340173
3.96 -0.00739939918296532
4 -0.0066560039928192
};
\addlegendentry{predict}
\addplot [thick, blue, dashed]
table {%
0.04 0.000140014946965442
0.08 0.000720146717751033
0.12 0.00197602267077028
0.16 0.00406630075681433
0.2 0.00717070410748714
0.24 0.0114893991065329
0.28 0.0171729726273926
0.32 0.024257281776499
0.36 0.0326987267658206
0.4 0.0424285762379663
0.44 0.0533561451606472
0.48 0.0653477803735181
0.52 0.0782216355847187
0.56 0.0917665500780245
0.6 0.105768777825507
0.64 0.120020810406467
0.68 0.13431974543689
0.72 0.148471557674555
0.76 0.162302327107032
0.8 0.175668855199718
0.84 0.185522230487431
0.88 0.18828550757443
0.92 0.184970468489367
0.96 0.176949786809227
1 0.163648549526998
1.04 0.144513464918043
1.08 0.120465058402529
1.12 0.0938621424309684
1.16 0.0664515627481474
1.2 0.0389259722493751
1.24 0.0119400428918729
1.28 -0.0133907463580785
1.32 -0.0358070142020983
1.36 -0.0544105602562812
1.4 -0.0688506245285249
1.44 -0.0789778438919373
1.48 -0.084635381533005
1.52 -0.085780796079188
1.56 -0.0826372626027194
1.6 -0.0756914754040303
1.64 -0.0655601174438489
1.68 -0.0528848704188393
1.72 -0.0383460996403683
1.76 -0.0226868952813502
1.8 -0.00668067250678404
1.84 0.00893899235555523
1.88 0.0235251969504032
1.92 0.0365269785765645
1.96 0.0474860865811452
2 0.0560516421229767
2.04 0.0619996785684976
2.08 0.0652405704349508
2.12 0.0658084515552972
2.16 0.063845892791152
2.2 0.0595906964641053
2.24 0.0533642771868648
2.28 0.0455538520527164
2.32 0.0365899952850025
2.36 0.0269230692316715
2.4 0.0170034009417749
2.44 0.00726331748549851
2.48 -0.00189933798223835
2.52 -0.0101368920634583
2.56 -0.0171639299778142
2.6 -0.0227654455213741
2.64 -0.0268005958154968
2.68 -0.0292039639538364
2.72 -0.0299836054833175
2.76 -0.029216308487116
2.8 -0.0270396011135873
2.84 -0.0236419008618425
2.88 -0.0192510248631362
2.92 -0.0141221757337677
2.96 -0.00852523851599646
3 -0.00273240247510174
3.04 0.00299372321993861
3.08 0.00841051249141467
3.12 0.0133044487767333
3.16 0.0174982532606038
3.2 0.0208561842723831
3.24 0.0232869690622002
3.28 0.0247448201422215
3.32 0.0252284369980071
3.36 0.024778225818181
3.4 0.0234717489664883
3.44 0.021418097623957
3.48 0.0187510972575327
3.52 0.01562205134893
3.56 0.0121921386114816
3.6 0.00862492875073749
3.64 0.00507921253767717
3.68 0.00170264485593567
3.72 -0.00137393298059126
3.76 -0.00404094126616981
3.8 -0.00621333745426983
3.84 -0.00783261341346062
3.88 -0.00886760117944158
3.92 -0.00931399360396492
3.96 -0.00919282589177759
4 -0.00854787571613503
};
\addlegendentry{truth}
\addplot [thick, black, dotted, mark=*, mark size=1, mark options={solid}]
table {%
0.04 0.0252924277280806
0.08 -0.00526302176699334
0.12 -0.0156487424417063
0.16 0.00757891850459556
0.2 0.0181443395796639
0.24 0.00425154103382267
0.28 0.010243351859313
0.32 0.0635470499797471
0.36 0.0265001452365009
0.4 0.0277605734671877
0.44 0.055775529456014
0.48 0.0723563709961322
0.52 0.0968833202697006
0.56 0.0851506488818199
0.6 0.101572855754841
0.64 0.139624489540715
0.68 0.126328253050826
0.72 0.164520636717078
0.76 0.186756401884119
0.8 0.178400608370176
0.84 0.18303521551628
0.88 0.199610614313807
0.92 0.176708777694456
0.96 0.167589931352379
1 0.170599051178352
1.04 0.158972506346563
1.08 0.115859195675929
1.12 0.0945865657044354
1.16 0.0671190025766413
1.2 0.0483044163463201
1.24 0.0150405575784056
1.28 -0.020498289629689
1.32 -0.0344262532031038
1.36 -0.0623919893800088
1.4 -0.069339413762601
1.44 -0.0678202980363061
1.48 -0.0985986122221554
1.52 -0.112240440674013
1.56 -0.0578900524966399
1.6 -0.0718675111200538
1.64 -0.0855833352503301
1.68 -0.0594133621292621
1.72 -0.0147578514606414
1.76 -0.0351198211436823
1.8 0.00457454036256481
1.84 -0.00336868496862355
1.88 0.0402723227541884
1.92 0.0496810472727153
1.96 0.0628929360774439
2 0.0596831077744781
};
\addlegendentry{data}
\end{axis}

\end{tikzpicture}
		\caption{Noise level $8\%$}
	\end{subfigure}
	
	\caption{Evolution of the tip displacement comparing the ``true" model and the model calibrated from the noisy data with the noise level $2,4,6,8\%$.}
	\label{fig:tip_displacement}
\end{figure}
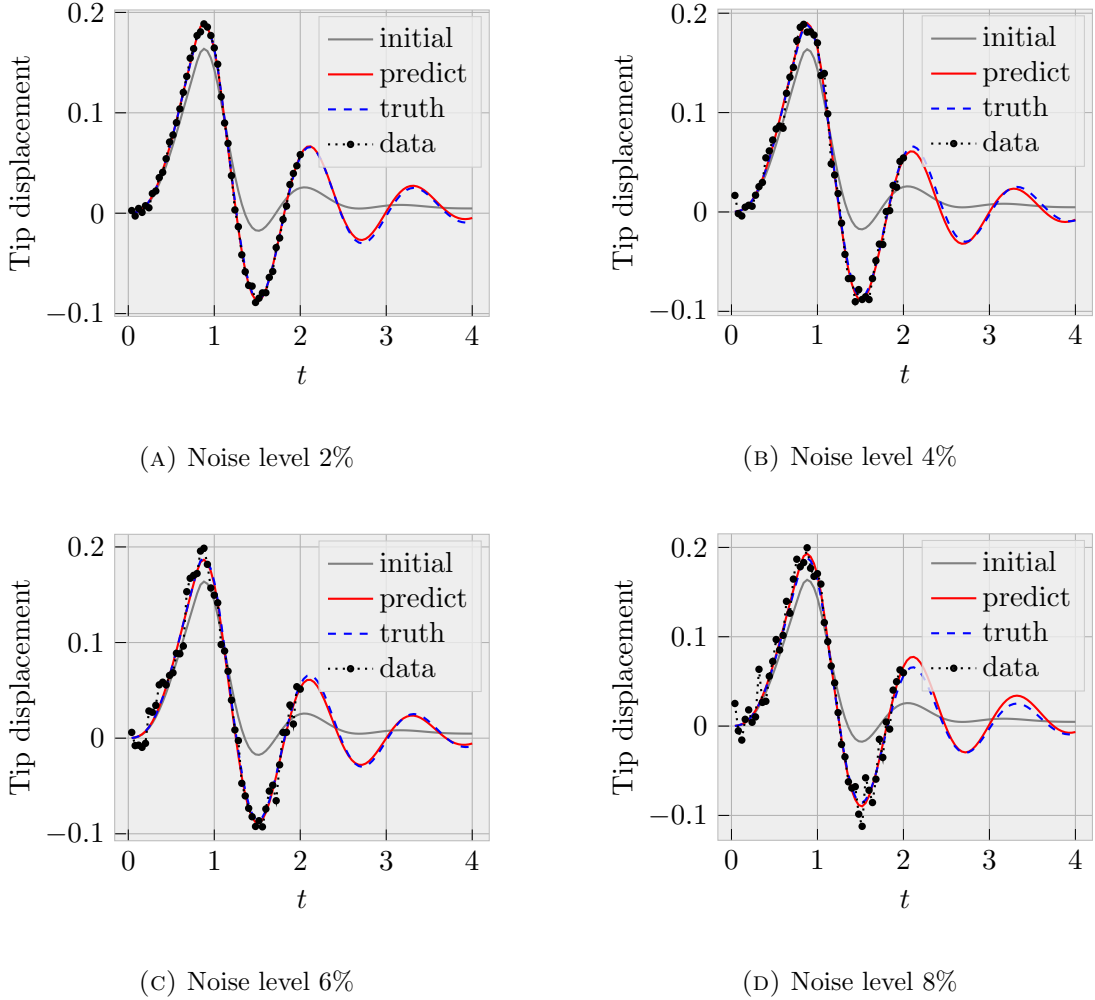

\begin{figure}[!h]
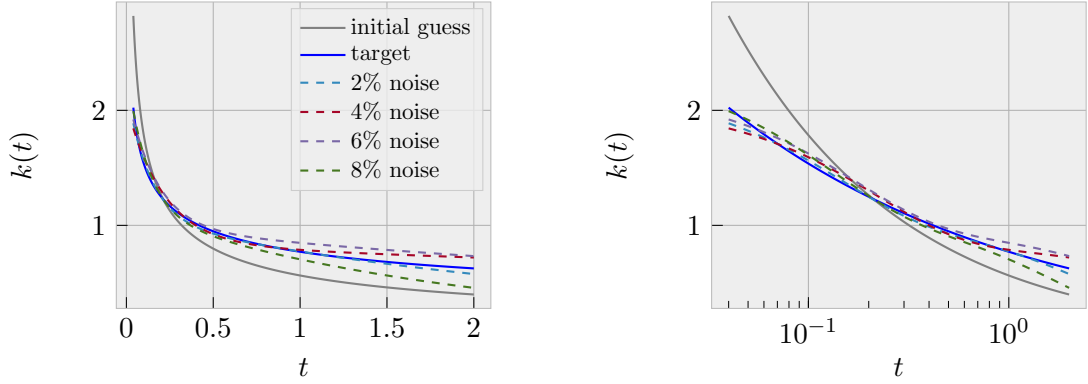

	\input{plt_compare_resulting_kernels.tex}
	\hfill
	\input{plt_compare_resulting_kernels_log.tex}
	\caption{Comparison of the resulting kernels calibrated using different noise level (right: log-scale).}
	\label{fig:kernels}
\end{figure}

\begin{figure}[!h]
\begin{tikzpicture}

\definecolor{color0}{rgb}{0.203921568627451,0.541176470588235,0.741176470588235}
\definecolor{color1}{rgb}{0.650980392156863,0.0235294117647059,0.156862745098039}
\definecolor{color2}{rgb}{0.47843137254902,0.407843137254902,0.650980392156863}
\definecolor{color3}{rgb}{0.274509803921569,0.470588235294118,0.129411764705882}

\begin{axis}[
axis background/.style={fill=white!93.3333333333333!black},
axis line style={white!73.7254901960784!black},
legend cell align={left},
legend style={
  fill opacity=0.8,
  draw opacity=1,
  text opacity=1,
  draw=white!80!black,
  fill=white!93.3333333333333!black
},
log basis y={10},
tick pos=left,
width=0.6\textwidth,
x grid style={white!69.8039215686274!black},
xlabel={\(\displaystyle iteration\)},
xmajorgrids,
xmin=0.35, xmax=14.65,
xtick style={color=black},
y grid style={white!69.8039215686274!black},
ylabel={Loss},
ymajorgrids,
ymin=0.000251832935434445, ymax=0.345527640898305,
ymode=log,
ytick style={color=black}
]
\addplot [thick, color0, mark=*, mark size=1, mark options={solid}]
table {%
1 0.233947804866796
2 0.0149157959690183
3 0.00309658581742947
4 0.00195501292722077
5 0.00126875405309699
6 0.000616705459490229
7 0.000401082537639969
8 0.000397553861970357
9 0.000395764614107594
10 0.000387873438317473
11 0.000375571787281794
12 0.000359289391075803
13 0.000350013778477817
14 0.000349719836910302
};
\addlegendentry{$2\%$ noise}
\addplot [thick, color1, mark=*, mark size=1, mark options={solid}]
table {%
1 0.240886880121253
2 0.0176945480573462
3 0.00481172780221422
4 0.00353382180448269
5 0.00268611422694161
6 0.00189912413005166
7 0.00161824289337072
8 0.00161369831751755
9 0.00161119416483475
10 0.00160021530437584
11 0.00158320055598934
12 0.00156115431128344
13 0.00154936749120421
14 0.00154892388832997
};
\addlegendentry{$4\%$ noise}
\addplot [thick, color2, mark=*, mark size=1, mark options={solid}]
table {%
1 0.248676048931309
2 0.021179992870255
3 0.00681156982325201
4 0.0054406578403433
5 0.0045615122569166
6 0.0037399084941407
7 0.00343335012129681
8 0.00343044899782337
9 0.00343032114729377
10 0.00342979924469594
11 0.00342637103374873
12 0.00342476825776582
13 0.00342440962825711
14 0.00342440378508464
};
\addlegendentry{$6\%$ noise}
\addplot [thick, color3, mark=*, mark size=1, mark options={solid}]
table {%
1 0.248814138911648
2 0.0218331096870561
3 0.00851141910682748
4 0.00719869638533542
5 0.00623375448453886
6 0.00530841631554953
7 0.00487367080331783
8 0.00483570861526034
9 0.00479248312832013
10 0.00466592178975209
11 0.00448267413784649
12 0.00433838333554307
13 0.0043046146242061
14 0.00430258619424591
};
\addlegendentry{$8\%$ noise}
\end{axis}

\end{tikzpicture}
\caption{Convergence of the loss function during the optimization.}
\label{fig:loss_convergence}
\end{figure}
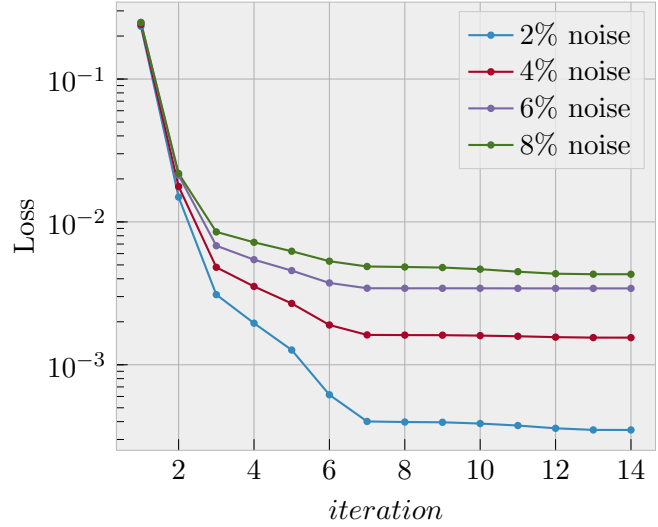

\begin{figure}[!h]
	\input{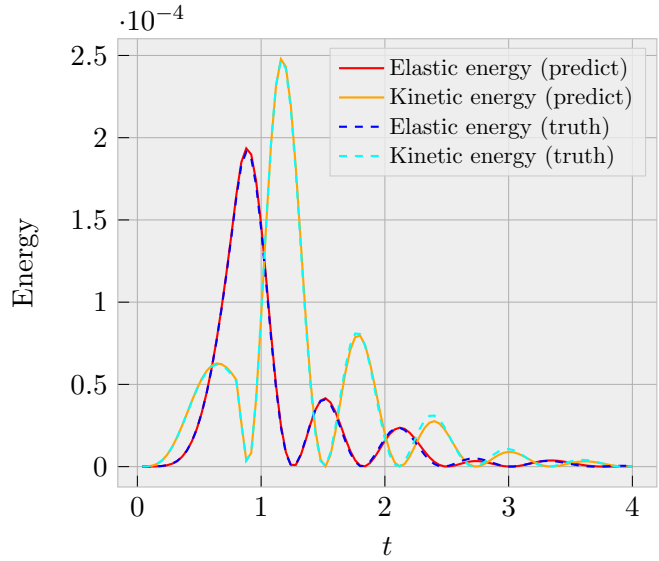}
	\caption{Evolution of the kinetic and elastic energies for the "true" and calibrated models (noise level $2\%$).}
	\label{fig:energy}
\end{figure}

Next, we consider the case when $\kertreps^{true}(t)$ is given by a $22$ modes rational approximation of $t^{\alpha_1-1}/\Gamma(\alpha_1)$ with $\alpha_1=0.9$, and $\kereps^{true}(t)$ is given by a $22$ modes rational approximation of $t^{\alpha_2-1}/\Gamma(\alpha_2)$ with $\alpha_2=0.7$.
We use a $8$ modes rational approximation of $t^{\alpha_0-1}/\Gamma(\alpha_0)$ with $\alpha_0=0.5$ as initial guess for both kernels.
Using this kernel, we numerically generate synthetic measurements of the tip displacement on the interval $[0,t_{meas}]$, using both loading types, bending~\eqref{eq:load:bending} and extension~\eqref{eq:load:extension}, and adding a noise of level $2\%$.
Then, we predict the kernel $\kertreps^{pred}(t)$ and $\kereps^{pred}(t)$ from these measurements with the number of modes $m_{\treps}=m_{\epsi}=8$.
To do this, we infer the parameters $\theta = \{w_{\treps,k},\lambda_{\treps,k},w_{\epsi,k}, \lambda_{\epsi,k}\}$ minimizing the loss function $J(\theta)$ defined as
\begin{equation}\label{key}
J(\theta) = \frac{1}{2}\frac{\|u_{tip}^{bend}(\theta) - u_{tip}^{bend,meas}\|^2_{[0,t_{meas}]}}{\|u_{tip}^{bend,meas}\|^2_{[0,t_{meas}]}} + \omega\cdot\frac{1}{2}\frac{\|u_{tip}^{ext}(\theta) - u_{tip}^{ext,meas}\|^2_{[0,t_{meas}]}}{\|u_{tip}^{ext,meas}\|^2_{[0,t_{meas}]}},
\end{equation}
where the superscipts $bend$ and $ext$ correspond to the bending and extension excitation types, respectively; the norm is again the $\ell_2$-norm on the discretized interval $[0,t_{meas}]$.
Above, the weight $\omega$ is introduced to balance the contributions due to the bending and the extension terms.
In our experiment, we fix $\omega=10$ in order to take into account the tip displacement measurements under extension after the time $t_{meas}$, which have a scale of an order of magnitude smaller than the global scale $\|u_{tip}^{ext,meas}\|_{[0,t_{meas}]}$ (see \Cref{sbfig:two_kernels:observations:extension}).
In~\Cref{fig:two_kernels:observations}, the norm of the resulting tip displacements is plotted on the complete interval $[0,T]$ for the model calibrated from the noisy data.
Again, we can see that the calibrated model can predict the evolution after the time $t_{meas}$.
In~\Cref{fig:two_kernels:kernels}, where the resulting kernels are compared with the target, we can observe that providing measurements for different loading types results in a good approximation of the integral kernels.

\begin{figure}[!h]
	\begin{subfigure}{0.45\textwidth}
\begin{tikzpicture}

\begin{axis}[
axis background/.style={fill=white!93.3333333333333!black},
axis line style={white!73.7254901960784!black},
legend cell align={left},
legend style={
  fill opacity=0.8,
  draw opacity=1,
  text opacity=1,
  draw=white!80!black,
  fill=white!93.3333333333333!black
},
yticklabel style={
	/pgf/number format/fixed,
	/pgf/number format/precision=5
},
tick pos=left,
width=\textwidth,
x grid style={white!69.8039215686274!black},
xlabel={\(\displaystyle t\)},
xmajorgrids,
xmin=-0.158, xmax=4.198,
xtick style={color=black},
y grid style={white!69.8039215686274!black},
ylabel={Tip displacement norm},
ymajorgrids,
ymin=-0.00960925077972064, ymax=0.204404959270626,
ytick style={color=black}
]
\addplot [thick, white!50.1960784313725!black]
table {%
0.04 0.000118667858931464
0.08 0.000622860025974365
0.12 0.0017472446403227
0.16 0.00367176262765436
0.2 0.00656829281869306
0.24 0.0105621204598141
0.28 0.0157105835400944
0.32 0.0220142877057838
0.36 0.0294333648620439
0.4 0.0378906680129879
0.44 0.0472777020866007
0.48 0.0574641582500944
0.52 0.0683079348446382
0.56 0.0796628917195443
0.6 0.0913858541231983
0.64 0.103342796024496
0.68 0.11541392727814
0.72 0.127497126047365
0.76 0.139510134776702
0.8 0.151391510751446
0.84 0.160610787199179
0.88 0.163914123962149
0.92 0.161704471821617
0.96 0.154911798208426
1 0.143553120932653
1.04 0.128470426608106
1.08 0.111007051066813
1.12 0.0923456098306162
1.16 0.0733909576090769
1.2 0.0550312482262103
1.24 0.0380982641839804
1.28 0.023367187701343
1.32 0.0119972491397656
1.36 0.00801277947960942
1.4 0.0120395640473088
1.44 0.0162363325015787
1.48 0.0185752487652508
1.52 0.0189354520723284
1.56 0.0175509476272621
1.6 0.0147853255989687
1.64 0.0111079771026344
1.68 0.00725498460655859
1.72 0.00517176273452833
1.76 0.00730044383773307
1.8 0.0112955510666202
1.84 0.0153524476861084
1.88 0.0189393791260936
1.92 0.0218416060542508
1.96 0.0239617993811178
2 0.0252730798871356
2.04 0.0257996373018923
2.08 0.0256051114827685
2.12 0.0247819339481552
2.16 0.0234423394008766
2.2 0.0217093942919718
2.24 0.0197097018644725
2.28 0.0175663085334104
2.32 0.0153935292274598
2.36 0.0132924104120308
2.4 0.0113479298715057
2.44 0.00962677458478534
2.48 0.00817608523950625
2.52 0.00702214040121231
2.56 0.00616979290748406
2.6 0.00560324681849907
2.64 0.00529084766906318
2.68 0.00519302646964199
2.72 0.00527000757203051
2.76 0.00548482230268598
2.8 0.00580272174005344
2.84 0.00618958321375258
2.88 0.00661177651881067
2.92 0.00703729584049296
2.96 0.00743755539399218
3 0.00778864873587582
3.04 0.0080723140478662
3.08 0.008276245351656
3.12 0.00839404788560903
3.16 0.00842466896754525
3.2 0.00837181516582236
3.24 0.00824314002631297
3.28 0.00804932572265528
3.32 0.00780300032265828
3.36 0.00751799619427461
3.4 0.00720847480697677
3.44 0.00688810829831269
3.48 0.00656954982383803
3.52 0.00626402348130142
3.56 0.00598071297308442
3.6 0.00572664819255013
3.64 0.00550659520914044
3.68 0.00532304697967301
3.72 0.00517631879400457
3.76 0.00506488733271502
3.8 0.00498567736192182
3.84 0.0049344686223697
3.88 0.00490629703789584
3.92 0.00489585735630358
3.96 0.00489780639806885
4 0.00490704408031812
};
\addlegendentry{initial}
\addplot [thick, red]
table {%
0.04 0.000144160377910242
0.08 0.000737073930411183
0.12 0.00201291191227981
0.16 0.00412539102565453
0.2 0.00724985328714739
0.24 0.0115946055479997
0.28 0.0173298991624474
0.32 0.0245004072013151
0.36 0.0330550455785748
0.4 0.0429198229177312
0.44 0.0540098106329507
0.48 0.0662002365502575
0.52 0.0793098982201568
0.56 0.0931159926539638
0.6 0.107391320225749
0.64 0.121921100915514
0.68 0.136496057537639
0.72 0.150910213966001
0.76 0.164972088863615
0.8 0.178520895581219
0.84 0.188412857893744
0.88 0.19104362538593
0.92 0.187528522676743
0.96 0.179311198831686
1 0.165807682698905
1.04 0.14620909037349
1.08 0.121194630359083
1.12 0.0933455179841484
1.16 0.0647769554190442
1.2 0.036274024198681
1.24 0.00984739364263562
1.28 0.020937824779935
1.32 0.0445357311194831
1.36 0.0645512760472424
1.4 0.0802181618491865
1.44 0.0914258296912398
1.48 0.0979918618802475
1.52 0.0997292556770743
1.56 0.0967165273029786
1.6 0.0894101577800468
1.64 0.0784983104584466
1.68 0.0646686816567712
1.72 0.0485914132264326
1.76 0.0310229727240262
1.8 0.0128873684872585
1.84 0.00595046947857627
1.88 0.0227152334854242
1.92 0.0382484549656614
1.96 0.0517128354212435
2 0.0626346090058042
2.04 0.0706762022917826
2.08 0.075657721931001
2.12 0.077554479389637
2.16 0.076469994794083
2.2 0.0726117703816864
2.24 0.0662853110438
2.28 0.0578877837394909
2.32 0.0478885933272061
2.36 0.0368001913798885
2.4 0.0251584829227612
2.44 0.0135525064289002
2.48 0.00364361449633154
2.52 0.00926164600311809
2.56 0.0182540738241512
2.6 0.0258956253577862
2.64 0.0318072652502801
2.68 0.0358203371796703
2.72 0.0378710632899007
2.76 0.0379856129232915
2.8 0.0362716885024391
2.84 0.0329086768699231
2.88 0.0281358171918834
2.92 0.022241468385337
2.96 0.0155591498387358
3 0.00851434486436527
3.04 0.00280128324474619
3.08 0.00715759900839069
3.12 0.0137119749352839
3.16 0.01974234950487
3.2 0.0248977564266757
3.24 0.0289863049562011
3.28 0.0318886103566315
3.32 0.0335461793591543
3.36 0.0339573166189
3.4 0.033172857608097
3.44 0.0312908378990627
3.48 0.028449710816621
3.52 0.0248206679158456
3.56 0.0205993642868426
3.6 0.0159985337103607
3.64 0.0112451640022172
3.68 0.00660427258203618
3.72 0.00270719049975335
3.76 0.00326884762343487
3.8 0.00648139607796663
3.84 0.00934705106371928
3.88 0.0115382226730013
3.92 0.0129671400885108
3.96 0.0136106059275903
4 0.0134876528049059
};
\addlegendentry{predict}
\addplot [thick, blue, dashed]
table {%
0.04 0.000144444835800665
0.08 0.000738822363342573
0.12 0.00201819060055682
0.16 0.00413611643973659
0.2 0.00726779902065134
0.24 0.0116235170967876
0.28 0.017377036578403
0.32 0.0245764740211434
0.36 0.0331731350093548
0.4 0.0430935672944019
0.44 0.0542527012110571
0.48 0.0665267421146665
0.52 0.0797353682170086
0.56 0.0936552938888629
0.6 0.108057699365018
0.64 0.122725377401157
0.68 0.137447113198305
0.72 0.152015534000765
0.76 0.166237327113245
0.8 0.179949528670914
0.84 0.190000634312257
0.88 0.192771797780044
0.92 0.189377271824929
0.96 0.181281698875223
1 0.16790180313874
1.04 0.148386215663012
1.08 0.123383343122237
1.12 0.0954801813573837
1.16 0.0668282735602639
1.2 0.0383059101776933
1.24 0.0122336412670915
1.28 0.0203937866360874
1.32 0.0437648106877269
1.36 0.0638838725102692
1.4 0.0796891488437472
1.44 0.0910118002053593
1.48 0.0976684434691038
1.52 0.0994784701036828
1.56 0.096509785959004
1.6 0.0892112903989766
1.64 0.0782757024952186
1.68 0.0644102268570841
1.72 0.0483141302920467
1.76 0.0307886410042255
1.8 0.0129966320420236
1.84 0.00762927823873302
1.88 0.023609288163918
1.92 0.0389399017768798
1.96 0.0522292271958632
2 0.0629474125702792
2.04 0.0707436211166362
2.08 0.075434082026753
2.12 0.0769966857058974
2.16 0.0755435363442654
2.2 0.0712951921710024
2.24 0.0645716829963974
2.28 0.0557849590803972
2.32 0.0454205943860279
2.36 0.0340112179018759
2.4 0.022126362878928
2.44 0.0104918499159765
2.48 0.00417183601364298
2.52 0.0129223079811656
2.56 0.0219700500023525
2.6 0.0295440681109813
2.64 0.035313215289017
2.68 0.0391186265007885
2.72 0.0409068334296658
2.76 0.0407167679793958
2.8 0.0386712101841611
2.84 0.0349667331107126
2.88 0.0298621202803876
2.92 0.0236695426925716
2.96 0.0167589489604548
3 0.00964901840277478
3.04 0.00424224441216384
3.08 0.00750681455723117
3.12 0.0138030216917431
3.16 0.0197225773117332
3.2 0.0247519908752089
3.24 0.0286687195330763
3.28 0.0313486830853318
3.32 0.0327378003559924
3.36 0.0328433400126728
3.4 0.0317277529338934
3.44 0.0295021448969588
3.48 0.026318597827188
3.52 0.0223616777141746
3.56 0.0178396134104648
3.6 0.0129769348313192
3.64 0.00801596307040152
3.68 0.00331541315365319
3.72 0.0023336653264147
3.76 0.00613635115462508
3.8 0.00965883988731452
3.84 0.0125107180895637
3.88 0.0145848287708013
3.92 0.0158344711480975
3.96 0.0162558891014418
4 0.0158834906795556
};
\addlegendentry{truth}
\addplot [thick, black, dotted, mark=*, mark size=0.65, mark options={solid}]
table {%
0.04 0.000143966172801475
0.08 0.00075056765359186
0.12 0.00208329894205927
0.16 0.00408836064121734
0.2 0.00719916202304883
0.24 0.0112402075646856
0.28 0.0170178533154385
0.32 0.0249619528802173
0.36 0.0326719227576568
0.4 0.0446258420973212
0.44 0.0538590012246016
0.48 0.0672587827720044
0.52 0.0796986852472375
0.56 0.0923808255383659
0.6 0.107293425810152
0.64 0.123439108162103
0.68 0.139695445400457
0.72 0.152081963131282
0.76 0.16498762703953
0.8 0.173131627462688
0.84 0.194677040631974
0.88 0.190247966083269
0.92 0.193427778459185
0.96 0.177360862624438
1 0.170058324358867
1.04 0.145142692398442
1.08 0.120438663741996
1.12 0.0957624365351636
1.16 0.0672856368877562
1.2 0.0385907285133187
1.24 0.0124504001504458
1.28 0.0197777654559184
1.32 0.0450429226290206
1.36 0.0665783228021029
1.4 0.0781015635534126
1.44 0.0907876090938099
1.48 0.0958723212430218
1.52 0.102842655578786
1.56 0.100727472072364
1.6 0.0882703972914662
1.64 0.0795949568681398
1.68 0.0649839758584631
1.72 0.0476494758764579
1.76 0.0303572370992223
1.8 0.0128068953654259
1.84 0.00771463077986478
1.88 0.0244895948095346
1.92 0.0386521970110428
1.96 0.0522181824838367
2 0.0605027903457193
};
\addlegendentry{data}
\end{axis}

\end{tikzpicture}
		\caption{bending}
		\label{sbfig:two_kernels:observations:bending}
	\end{subfigure}
	\hfill
	\begin{subfigure}{0.45\textwidth}
\begin{tikzpicture}

\begin{axis}[
axis background/.style={fill=white!93.3333333333333!black},
axis line style={white!73.7254901960784!black},
legend cell align={left},
legend style={
  fill opacity=0.8,
  draw opacity=1,
  text opacity=1,
  draw=white!80!black,
  fill=white!93.3333333333333!black
},
tick pos=left,
width=\textwidth,
x grid style={white!69.8039215686274!black},
xlabel={\(\displaystyle t\)},
xmajorgrids,
xmin=-0.158, xmax=4.198,
xtick style={color=black},
y grid style={white!69.8039215686274!black},
ylabel={Tip displacement norm},
ymajorgrids,
ymin=-0.00174380353691742, ymax=0.0479777081145792,
ytick style={color=black}
]
\addplot [thick, white!50.1960784313725!black]
table {%
0.04 0.000516265174514247
0.08 0.00173653312673543
0.12 0.003138955820829
0.16 0.00470086438308404
0.2 0.00644012172336861
0.24 0.00827645206486772
0.28 0.0102318781688017
0.32 0.0122716870558399
0.36 0.0143966619029326
0.4 0.0165940827186224
0.44 0.0188590077537226
0.48 0.0211853308738671
0.52 0.0235684035029347
0.56 0.0260040900125034
0.6 0.028488865561659
0.64 0.031019509023038
0.68 0.0335933514232231
0.72 0.0362077879576619
0.76 0.0388607228351279
0.8 0.0415500234024958
0.84 0.0334383745247433
0.88 0.020918448468218
0.92 0.0186822368444251
0.96 0.016773971008825
1 0.0143657540650386
1.04 0.0134772458299619
1.08 0.0120611078718365
1.12 0.0112712685761036
1.16 0.0104084191566212
1.2 0.00975065365653837
1.24 0.0091477741672641
1.28 0.0086257227878141
1.32 0.00815785968861255
1.36 0.00773611066080812
1.4 0.00735084263906046
1.44 0.00699878830529953
1.48 0.00667159546966377
1.52 0.00637015973290985
1.56 0.00608659255036644
1.6 0.00582329475637795
1.64 0.00557354015426037
1.68 0.00534043584328373
1.72 0.00511853387876828
1.76 0.00491123690362289
1.8 0.00471396502007817
1.84 0.00453003959755853
1.88 0.00435539331443137
1.92 0.00419311474457894
1.96 0.00403951426418851
2 0.00389742910841192
2.04 0.00376347523158334
2.08 0.003640207643716
2.12 0.00352446023513525
2.16 0.00341846123474267
2.2 0.00331919298149305
2.24 0.00322854237283223
2.28 0.00314361488460257
2.32 0.0030659182495312
2.36 0.00299270415843542
2.4 0.00292528612730949
2.44 0.00286107716409793
2.48 0.00280127911088064
2.52 0.00274354074652871
2.56 0.00268895712744197
2.6 0.00263545785145551
2.64 0.00258408289731774
2.68 0.00253304849214814
2.72 0.00248339869233848
2.76 0.0024336288001343
2.8 0.00238480408595828
2.84 0.00233566790372065
2.88 0.00228729962522097
2.92 0.00223865142095215
2.96 0.00219080487978271
3 0.00214288404767867
3.04 0.00209595845981375
3.08 0.00204928642631073
3.12 0.0020039058822112
3.16 0.00195917007745704
3.2 0.00191606190260639
3.24 0.00187399130125302
3.28 0.00183386228279996
3.32 0.00179510731409525
3.36 0.00175850559638157
3.4 0.00172349099935222
3.44 0.00169080599524386
3.48 0.00165987112710767
3.52 0.00163124721294728
3.56 0.00160432664547069
3.6 0.0015795585116248
3.64 0.00155630383709487
3.68 0.00153492129319915
3.72 0.00151476262306223
3.76 0.00149611411214372
3.8 0.00147833931848056
3.84 0.00146167407328747
3.88 0.00144551398925244
3.92 0.00143006953662227
3.96 0.00141478633263613
4 0.00139987036009466
};
\addlegendentry{initial}
\addplot [thick, red]
table {%
0.04 0.000999019817610712
0.08 0.00310937266218722
0.12 0.00492805296424362
0.16 0.00676064584798612
0.2 0.00899761975625768
0.24 0.0110841703011232
0.28 0.0132254313630939
0.32 0.015546866787846
0.36 0.0177913268596958
0.4 0.0200969949718639
0.44 0.0224838979762614
0.48 0.0248386002045292
0.52 0.0272466384075657
0.56 0.0296891586357317
0.6 0.0321299396410354
0.64 0.0346058361755575
0.68 0.0371036128392196
0.72 0.039608639409224
0.76 0.0421402549545648
0.8 0.0446866934328881
0.84 0.0262703411603701
0.88 0.0045689087545594
0.92 0.0111472671063513
0.96 0.0116511661644435
1 0.00406063699185172
1.04 0.00750795957638812
1.08 0.00689911382765259
1.12 0.00366375703189305
1.16 0.00558919509874695
1.2 0.00473946123017928
1.24 0.00344332448867241
1.28 0.00442732229508997
1.32 0.00368270107666631
1.36 0.00329109041457243
1.4 0.00360790575577055
1.44 0.00317428155248371
1.48 0.00298587593299103
1.52 0.00308756416135387
1.56 0.00279605179275149
1.6 0.00273757636008902
1.64 0.00272800419902038
1.68 0.00257360335313316
1.72 0.00253682278362342
1.76 0.00250499061886385
1.8 0.00240965021079516
1.84 0.00238424179774574
1.88 0.00234196601607708
1.92 0.00228554293349155
1.96 0.00225730934914349
2 0.00222152795918913
2.04 0.00217806016837588
2.08 0.00215189188767315
2.12 0.00211535639865279
2.16 0.00208333416801345
2.2 0.00205537347270341
2.24 0.00202819537188819
2.28 0.00200113538138675
2.32 0.00198011989295531
2.36 0.00195542699623402
2.4 0.00193417720037324
2.44 0.00191104981331036
2.48 0.00188861458081319
2.52 0.00186456153017643
2.56 0.00184217761421287
2.6 0.0018176122883154
2.64 0.00179530253456732
2.68 0.00177175719757317
2.72 0.00174988948287887
2.76 0.00172738283035012
2.8 0.00170676466825806
2.84 0.00168568336881807
2.88 0.0016668105691561
2.92 0.0016478263761189
2.96 0.0016309654424973
3 0.00161415028482389
3.04 0.00159915064392355
3.08 0.00158398742715065
3.12 0.00157034191836942
3.16 0.00155641110310511
3.2 0.00154380909497502
3.24 0.00153097775141306
3.28 0.00151940064180037
3.32 0.00150766166658874
3.36 0.0014970852435171
3.4 0.0014862064661782
3.44 0.00147634701506776
3.48 0.00146624772942235
3.52 0.00145666525394076
3.56 0.00144669236014094
3.6 0.00143724598597884
3.64 0.0014269359232464
3.68 0.00141687420688529
3.72 0.00140593360071308
3.76 0.00139483425492008
3.8 0.0013827630675392
3.84 0.00137048807889653
3.88 0.00135719534119396
3.92 0.00134373941906497
3.96 0.00132940167112302
4 0.00131499987170842
};
\addlegendentry{predict}
\addplot [thick, blue, dashed]
table {%
0.04 0.0010047493771158
0.08 0.0031315507384938
0.12 0.0049828967270109
0.16 0.00684699653541138
0.2 0.00910330116363208
0.24 0.0112135013628881
0.28 0.013384598332968
0.32 0.0157290350486864
0.36 0.0180007732834553
0.4 0.0203429088726389
0.44 0.0227599722538218
0.48 0.0251529533913387
0.52 0.0276037096476688
0.56 0.0300872341712772
0.6 0.0325755131859377
0.64 0.0351013853964452
0.68 0.0376497829970592
0.72 0.0402097167011075
0.76 0.0427983841431437
0.8 0.045402733334681
0.84 0.0269335449416946
0.88 0.00514923349796567
0.92 0.0113926215551457
0.96 0.0119666254566553
1 0.00485031130216103
1.04 0.00821691813470738
1.08 0.0075807091559474
1.12 0.00473660008577301
1.16 0.00648929466040814
1.2 0.00554653133710463
1.24 0.00450672994466157
1.28 0.00528216789159829
1.32 0.00457269664413051
1.36 0.00431285978674019
1.4 0.0045272046312113
1.44 0.00415546242983876
1.48 0.00401459843503681
1.52 0.00402295020234068
1.56 0.00371877707133892
1.6 0.00360902597860626
1.64 0.00348531667373153
1.68 0.00327778392509632
1.72 0.00315745154742815
1.76 0.00302690871254407
1.8 0.00286115746265486
1.84 0.00274484721847634
1.88 0.00260626602467902
1.92 0.00246242346275837
1.96 0.00234240210704362
2 0.00221913691662053
2.04 0.00210507953185303
2.08 0.00201431774265838
2.12 0.00192731369292876
2.16 0.00185794615012452
2.2 0.00180040802801925
2.24 0.00175412561572069
2.28 0.00171748762312175
2.32 0.0016971631170998
2.36 0.00168425259959902
2.4 0.00168671071891668
2.44 0.00169505249299484
2.48 0.00171224152207208
2.52 0.00173071059192786
2.56 0.0017526246097632
2.6 0.0017713955979315
2.64 0.00178991554504415
2.68 0.00180318317500044
2.72 0.00181353579038187
2.76 0.00181715320290555
2.8 0.00181552070036716
2.84 0.00180549816613585
2.88 0.00178862779254751
2.92 0.00176272250144214
2.96 0.00172980506504986
3 0.00168924490673617
3.04 0.0016432767692013
3.08 0.00159143476380032
3.12 0.00153612751767507
3.16 0.00147681023676043
3.2 0.00141559454810073
3.24 0.00135202663236199
3.28 0.00128814232453725
3.32 0.00122385034005605
3.36 0.00116208781601583
3.4 0.00110283608969331
3.44 0.00104884672531748
3.48 0.00100133918595868
3.52 0.000962973747482536
3.56 0.000934397553601966
3.6 0.000917742953875004
3.64 0.000911769799095603
3.68 0.000917881590807423
3.72 0.000933810766129906
3.76 0.00095839738814227
3.8 0.000989185019182988
3.84 0.00102479735160954
3.88 0.00106227291729605
3.92 0.00110066386805048
3.96 0.00113765046835076
4 0.00117259517588086
};
\addlegendentry{truth}
\addplot [thick, black, dotted, mark=*, mark size=0.65, mark options={solid}]
table {%
0.04 0.00101094575772956
0.08 0.00309542571548585
0.12 0.00503349007708109
0.16 0.00665043833596454
0.2 0.00886565940383157
0.24 0.0112865388183302
0.28 0.0135980816785996
0.32 0.0150763221660988
0.36 0.0178992326114846
0.4 0.0194965879523042
0.44 0.0229956486117268
0.48 0.0245656939115532
0.52 0.0271020656043883
0.56 0.0297556061530199
0.6 0.0325965191523237
0.64 0.0358156762848778
0.68 0.0378102732341072
0.72 0.0399344335170473
0.76 0.0438355554562395
0.8 0.0457176394031475
0.84 0.0267933301905215
0.88 0.00514434853782276
0.92 0.0114314083517037
0.96 0.0121867495552139
1 0.00483785140095601
1.04 0.00821064057668169
1.08 0.00781546717759778
1.12 0.00459557155439135
1.16 0.00640831039782046
1.2 0.00582519711787819
1.24 0.00439650444081881
1.28 0.00518274621735259
1.32 0.00459136583962116
1.36 0.0042979602832129
1.4 0.0045351031160176
1.44 0.00425099183733273
1.48 0.00395268294769008
1.52 0.00419993396901107
1.56 0.00371439356752287
1.6 0.00361547282302108
1.64 0.00347112930377825
1.68 0.00333205620788255
1.72 0.00313650015528758
1.76 0.0030114611416993
1.8 0.0029187213148796
1.84 0.00271799980181833
1.88 0.0026534749807325
1.92 0.00247737761928337
1.96 0.00229452592506619
2 0.00228465753320237
};
\addlegendentry{data}
\end{axis}

\end{tikzpicture}
		\caption{extension}
		\label{sbfig:two_kernels:observations:extension}
	\end{subfigure}
	\caption{Evolution of the tip displacement comparing the ``true" model and the model calibrated from the noisy data with the noise level $2\%$, using two loading types: (\subref{sbfig:two_kernels:observations:bending}) - bending, 
	(\subref{sbfig:two_kernels:observations:extension}) - extension.
	}
	\label{fig:two_kernels:observations}
\end{figure}
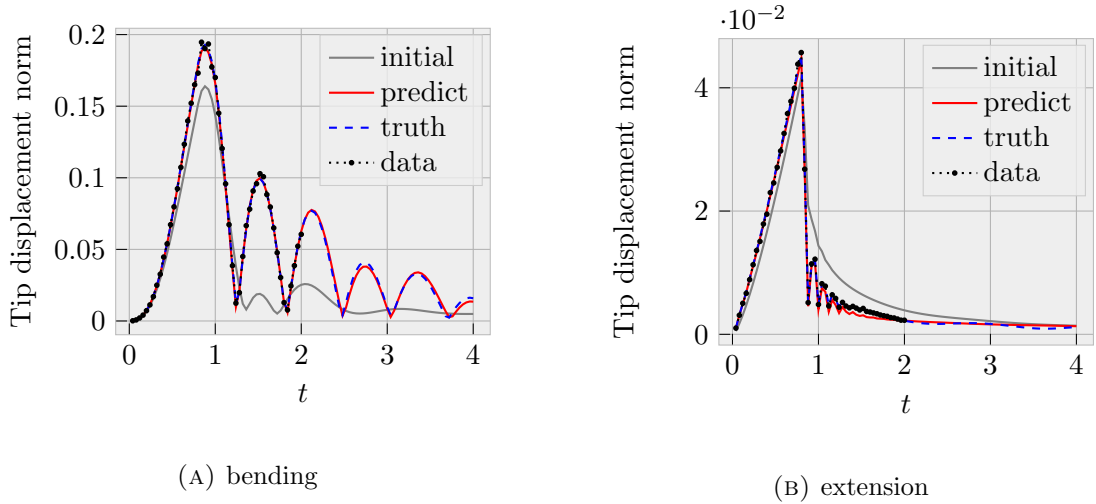

~\\
\newpage
~\\
\begin{figure}[!h]
	\begin{subfigure}{0.45\textwidth}
		\input{plt_two_kernels_compare_kernels_tr.tex}
		\caption{hydrostatic kernel}
	\end{subfigure}
	\hfill
	\begin{subfigure}{0.45\textwidth}
		\input{plt_two_kernels_compare_kernels_dev.tex}
		\caption{deviatoric kernel}
	\end{subfigure}
	\caption{Comparison of the resulting kernels $\kertreps(t)$ (left) and $\kereps(t)$ (right) in the log-scale.}
	\label{fig:two_kernels:kernels}
\end{figure}

\begin{appendices}   
\section{Appendix: Regularity} \label{Appendix1}
\noindent We briefly discuss here the higher regularity of the displacement $\bfu$ in a setting where $\GN=\emptyset$ and $\partial \Omega$ sufficiently smooth. More precisely, we present the formal derivation of a higher-order estimate in terms of $\bfz$ and note that additional bounds on $\bfu$ can be obtained by distinguishing the smooth and singular cases for $\kersig$, as in Propositions~\ref{Pro:state:exis_uni1} and~\ref{Pro:state:exis_uni2}. \\
\indent Abbreviating $\boldsymbol{w}=\bbC \epsi(\bfz)$, $\boldsymbol{v}={\KerTen}*\epsi(\bfu_t)$ and testing 
\begin{equation}
\begin{aligned}
\rho\bfz_{tt}-\div\left[\boldsymbol{w} +\boldsymbol{v}\right] =\bfg
\end{aligned}
\end{equation}
 with $-\div(\boldsymbol{w}_t+\boldsymbol{v}_t)$ after integration over $\Omega\times(0,t)$ yields
\[
\begin{aligned}
&\begin{multlined}[t]\frac12\|\varrho^{1/2}\bbC^{1/2}\epsi(\bfz_t(t))\|_{L^2(\Omega)^d}^2 + \frac12\|\div(\boldsymbol{w}+\boldsymbol{v})(t)\|_{L^2(\Omega)^d}^2\\
+\int_0^t\int_\Omega \left( (\kersig*\epsi(\bfu_{t})_{tt}+ \epsi(\bfu_{tt})\right)
{\KerTen}*\epsi(\bfu_{tt})\dxt \end{multlined}
\\
=&\, -\int_0^t\int_\Omega \bfg_t(s)\div(\boldsymbol{w}+\boldsymbol{v})(s)\dxs + \int_\Omega \bfg(t)\div(w+v)(t)\dx\\
\leq&\,\begin{multlined}[t] 2\|\bfg_t\|_{L^1(0,t;L^2(\Omega)^d)}+\frac18\|\div(\boldsymbol{w}+\boldsymbol{v})\|_{L^\infty(0,t;L^2(\Omega)^d)}^2
+2\|\bfg(t)\|_{L^2(\Omega)^d}^2\\ +\frac18\|\div(\boldsymbol{w}+\boldsymbol{v})(t)\|_{L^2(\Omega)^d}^2,\end{multlined}
\end{aligned}
\]
where we have used $\boldsymbol{w}(t=0)=\boldsymbol{v}(t=0)=0$. Note that the second and fourth term on the right-hand side can be absorbed into the second term on the left-hand side. Due to the assumptions on $\KerTen$, we have
\begin{equation}
\int_0^t\int_\Omega \left( (\kersig*\epsi(\bfu_{t})_{tt}+ \epsi(\bfu_{tt})\right)
{\KerTen}*\epsi(\bfu_{tt})\dxt \geq 0.
\end{equation}
Altogether, we obtain the estimate
\[
\begin{aligned}
\|\epsi(\bfz_t(t))\|_{L^2(\Omega)^d}^2 + \|\div(\boldsymbol{w}+\boldsymbol{v})(t)\|_{L^2(\Omega)^d}^2
\lesssim\, \|\bfg_t\|_{L^1(0,t;L^2(\Omega)^d)}+\|\bfg(t)\|_{L^2(\Omega)^d}^2.
\end{aligned}
\]
a.e.\ in time.
\section{Appendix: Auxiliary inequalities}
Analogously to~\cite[Lemma 1]{Alikhanov:11}, one can prove the following lower bound.
\begin{lemma}\label{lem:Alikhanov1}
Given Hilbert space $X$ and any $w\in H^1(0,T;H)$, $k\in W^{1,1}(0,T)$ with $k\geq0$, $k'\leq0$ a.e.,
\begin{equation}
\int_0^T\langle w(t), k* w_t(t)\rangle\dt \geq 
\frac12(k* \|w\|^2)(T)-\frac12\int_0^Tk(t)\dt\, \|w(0)\|^2\,.
\end{equation}
\end{lemma}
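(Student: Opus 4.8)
\textbf{Proof proposal for Lemma~\ref{lem:Alikhanov1}.} The plan is to mimic the discrete argument of Alikhanov by working directly in the time-continuous setting and reducing everything to a single pointwise identity that, after integration against the nonnegative, nonincreasing kernel, produces the claimed bound. The starting observation is that, for fixed $t$, the integrand $\langle w(t), (k*w_t)(t)\rangle = \int_0^t k(t-s)\langle w(t), w_t(s)\rangle\ds$ should be compared with $\frac{1}{2}\frac{\textup{d}}{\textup{d}t}(k*\|w\|^2)(t)$. I would first expand, using the Leibniz rule \eqref{diffconvol},
\[
(k*\|w\|^2)_t(t) = (k*(\|w\|^2)_t)(t) + k(t)\|w(0)\|^2
= 2\int_0^t k(t-s)\langle w(s), w_t(s)\rangle\ds + k(t)\|w(0)\|^2,
\]
so that
\[
2\langle w(t), (k*w_t)(t)\rangle - (k*\|w\|^2)_t(t) + k(t)\|w(0)\|^2
= 2\int_0^t k(t-s)\langle w(t)-w(s), w_t(s)\rangle\ds.
\]
The task is then to show that, after integrating the left-hand side over $(0,T)$, the remaining term is nonnegative, i.e.\ $\int_0^T \int_0^t k(t-s)\langle w(t)-w(s), w_t(s)\rangle\ds\dt \geq 0$ (up to the boundary contributions already isolated).

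The key step is to handle this double integral. Writing $w(t)-w(s) = \int_s^t w_t(r)\dr$, the inner expression becomes $\int_0^t k(t-s)\langle \int_s^t w_t(r)\dr, w_t(s)\rangle\ds$; after Fubini in the $(s,r,t)$ block one is led to a quadratic form in $w_t$ with kernel built from $k$. I expect the cleanest route is instead to integrate by parts in $s$ inside $\int_0^t k(t-s)\langle w(t)-w(s), w_t(s)\rangle\ds$: since $\partial_s(w(t)-w(s)) = -w_t(s)$, one gets
\[
\int_0^t k(t-s)\langle w(t)-w(s), w_t(s)\rangle\ds
= \Big[-\tfrac12 k(t-s)\|w(t)-w(s)\|^2\Big]_{s=0}^{s=t} - \tfrac12\int_0^t k'(t-s)\|w(t)-w(s)\|^2\ds,
\]
where the $s=t$ endpoint vanishes and the $s=0$ endpoint gives $+\tfrac12 k(t)\|w(t)-w(0)\|^2 \geq 0$, while the remaining integral is $\geq 0$ because $k'\leq 0$ a.e. Thus the whole double integral is nonnegative, and integrating the pointwise identity over $(0,T)$ and rearranging yields exactly
\[
\int_0^T\langle w(t), (k*w_t)(t)\rangle\dt \geq \tfrac12(k*\|w\|^2)(T) - \tfrac12\Big(\int_0^T k(t)\dt\Big)\|w(0)\|^2.
\]

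The main obstacle I anticipate is the regularity bookkeeping rather than the algebra: for the integration-by-parts-in-$s$ step and the Leibniz rule to be legitimate one needs $k\in W^{1,1}(0,T)$ (so $k$ has a trace at $0$ and $k'\in L^1$) together with $w\in H^1(0,T;X)$ (so $s\mapsto\|w(t)-w(s)\|^2$ is absolutely continuous with the expected derivative, and the products $k(t-\cdot)\|w(t)-w(\cdot)\|^2$ are in $W^{1,1}$). These are exactly the hypotheses stated, so the argument goes through, but one should be slightly careful that all manipulations are first justified for smooth $k$ and $w$ and then extended by density, since $k$ may blow up at $0$ (only its $L^1$ and monotonicity properties are used, never pointwise boundedness). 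A secondary, purely cosmetic point is to make sure the sign conventions line up: the term $-\tfrac12\int_0^T\big(\int_0^t k'(t-s)\|w(t)-w(s)\|^2\ds\big)\dt$ is the one that forces the inequality direction, and it is nonnegative precisely because $k'\leq 0$; if one instead had $k'\geq 0$ the inequality would reverse, which is the expected behavior.
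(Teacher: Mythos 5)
Your proof is correct, but the key positivity step is different from the paper's. Both arguments start from the same pointwise identity $2\langle w(t),(k*w_t)(t)\rangle-(k*(\|w\|^2)_t)(t)=2\int_0^t k(t-s)\langle w(t)-w(s),w_t(s)\rangle\ds$; the paper then writes $w(t)-w(s)=\int_s^t w_t(r)\dr$, applies Fubini, and uses the Alikhanov-type trick of inserting the weight $1/k(t-r)$ so that the double integral becomes $\tfrac12\int_0^t \tfrac{1}{k(t-r)}\tfrac{d}{dr}\bigl\|\int_0^r k(t-s)w_t(s)\ds\bigr\|^2\dr$, and concludes by integrating by parts in $r$ and using $k'\le 0$, $k\ge 0$. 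You instead use $\langle w(t)-w(s),w_t(s)\rangle=-\tfrac12\partial_s\|w(t)-w(s)\|^2$ and integrate by parts in $s$, obtaining the explicit nonnegative remainder $\tfrac12 k(t)\|w(t)-w(0)\|^2-\tfrac12\int_0^t k'(t-s)\|w(t)-w(s)\|^2\ds$. This classical "monotone kernel" argument is somewhat cleaner: it never divides by $k(t-r)$, so it covers kernels that are allowed to vanish, where the paper's proof as written implicitly needs $k>0$ (or a regularization $k+\epsilon$) to make the weighted identity legitimate; it also exhibits the positivity defect in closed form. The paper's route, in turn, mirrors the discrete lemma it cites and packages the positivity as a single weighted square. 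One small correction to your regularity remarks: since $k\in W^{1,1}(0,T)\hookrightarrow C[0,T]$, the kernel cannot blow up at $t=0$ under the stated hypotheses, so the smoothing/density step you flag is not actually needed — the integration by parts is already justified because both $k(t-\cdot)$ and $\|w(t)-w(\cdot)\|^2$ are absolutely continuous and bounded on $[0,t]$, and the final integration over $(0,T)$ uses only that $(k*\|w\|^2)_t\in L^1$, which follows from the Leibniz rule \eqref{diffconvol}.
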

\begin{proof}
	The statement follows from
\[
\begin{aligned}
&\langle w(t),(k* w_t)(t)\rangle - \frac12 (k*\|w\|^2_t)(t)
=\int_0^t \left\langle w(t)- w(s), k(t-s) w_t(s)\right\rangle \ds \\
&=\int_0^t\langle \int_s^t w_t(r)\dr ,k(x-s) w_t(s)\rangle \ds 
=\int_0^t\int_0^r \langle w_t(r), k(t-s) w_t(s)\rangle  \dsr\\
&=\int_0^t\frac{1}{k(t-r)}k(t-r) \langle w'(r), \int_0^rk(t-s) w_t(s)\ds\rangle  \dr\\
&=\frac12\int_0^t\frac{1}{k(t-r)}\frac{d}{dr}\Bigl\|\int_0^rk(t-s) w_t(s)\ds\Bigr\|^2\dr\\
&=-\frac12\int_0^t\frac{k'(x-r)}{k^2(x-r)}\Bigl\|\int_0^rk(t-s) w_t(s)\ds\Bigr\|^2 \dr 
+ \frac12 \left[\frac{1}{k(t-r)}\Bigl\|\int_0^rk(t-s) w'(s)\ds\Bigr\|^2 \right]_0^t\\
&\geq0.
\end{aligned}
\]
\end{proof}
Similarly to~\cite[Lemma 2.3]{Eggermont:1987} (see also~\cite[Theorem 1]{VoegeliNedaiaslSauter:2016}) one obtains the following coercivity estimate.
\begin{lemma}\label{lem:coercivityI}
For some Hilbert space $X$ and any $w\in H^{-\delta/2}(0,T;H)$, $k\in L^1(0,T)$ with $\Re(\mathcal{F}k)(\omega)\geq\gamma(1+\omega^2)^{-\delta/2}$, $\omega\in\mathbb{R}$,
	\begin{equation}\label{eqn:coercivityI}
	\int_0^T \langle w(s),k* w(s)\rangle \ds \geq \gamma \| w \|_{H^{-\delta/2}(0,T;X)}^2, 
	\end{equation}	 
\end{lemma}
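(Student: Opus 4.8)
The plan is to reduce the bilinear estimate to a frequency-domain statement via Plancherel's theorem, after extending the functions suitably to the whole real line. First I would extend $w \in H^{-\delta/2}(0,T;X)$ by zero outside $(0,T)$, which does not increase the $H^{-\delta/2}(\mathbb{R};X)$-norm, and extend $k$ by zero outside $(0,T)$ as well so that the convolution $k*w$ on $(0,T)$ coincides with the half-line convolution appearing in the statement. The key observation is that, because $k$ and $w$ are both supported in $[0,T]$, we have for $t \in (0,T)$ that $(k*w)(t) = \int_0^t k(t-s)w(s)\,ds = \int_{\mathbb{R}} k(t-s)w(s)\,ds$, so $\int_0^T \langle w(s), (k*w)(s)\rangle\,ds = \int_{\mathbb{R}} \langle w(s), (k*w)(s)\rangle\,ds$, and the right-hand side is a genuine full-line pairing amenable to the Fourier transform.

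Next I would apply Plancherel/Parseval in the time variable (with values in the Hilbert space $X$, using the inner product on $X$ inside the pairing). Writing $\widehat{\cdot}$ for the temporal Fourier transform, $\int_{\mathbb{R}} \langle w(s), (k*w)(s)\rangle\,ds = \int_{\mathbb{R}} \overline{(\mathcal{F}k)(\omega)}\, \langle \widehat w(\omega), \widehat w(\omega)\rangle_X \,d\omega$ up to the usual normalization constant; taking real parts of both sides (the left side is already real since $w$ is real-valued, so the imaginary part integrates to zero by the symmetry $\widehat w(-\omega) = \overline{\widehat w(\omega)}$), I obtain $\int_{\mathbb{R}} \langle w, k*w\rangle\,ds = \int_{\mathbb{R}} \Re(\mathcal{F}k)(\omega)\, \|\widehat w(\omega)\|_X^2\,d\omega$. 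Using the hypothesis $\Re(\mathcal{F}k)(\omega) \geq \gamma(1+\omega^2)^{-\delta/2}$ pointwise in $\omega$, and noting $\|\widehat w(\omega)\|_X^2 \geq 0$, this is bounded below by $\gamma \int_{\mathbb{R}} (1+\omega^2)^{-\delta/2}\|\widehat w(\omega)\|_X^2\,d\omega$, which is exactly $\gamma \|w\|_{H^{-\delta/2}(\mathbb{R};X)}^2$ by the definition of the Bessel-potential (Sobolev) norm of negative order. Since the zero extension has $\|w\|_{H^{-\delta/2}(\mathbb{R};X)} \geq \|w\|_{H^{-\delta/2}(0,T;X)}$ interpreted as the quotient/restriction norm, or more simply since for compactly supported $w$ the two coincide appropriately, this yields \eqref{eqn:coercivityI}.

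The step I expect to require the most care is the handling of the negative-order norm and the extension by zero: one must be careful that $H^{-\delta/2}(0,T;X)$ is defined as the dual of $H_0^{\delta/2}(0,T;X)$ (consistent with the convention $H^{-\delta}(0,t)$ introduced earlier in the paper right after \eqref{AssumptionKernel2}), and that zero-extension is the correct adjoint operation to restriction of test functions, so that $\|E_0 w\|_{H^{-\delta/2}(\mathbb{R};X)} \leq \|w\|_{H^{-\delta/2}(0,T;X)}$ — giving the inequality in the direction we need. For $\delta/2 < 1/2$ this is standard (zero-extension is bounded $H^{-\delta/2}(0,T) \to H^{-\delta/2}(\mathbb{R})$ and on compactly supported distributions the Bessel-potential characterization applies), and one only needs $w$ to be regular enough for $k*w$ to be well-defined, which the hypothesis $w \in H^{-\delta/2}(0,T;X)$ together with $k \in L^1$ secures. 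The cited references \cite[Lemma 2.3]{Eggermont:1987} and \cite[Theorem 1]{VoegeliNedaiaslSauter:2016} carry out exactly this Fourier-side argument, so I would either adapt their proof verbatim or simply invoke them once the extension and the reduction to the full line are in place.
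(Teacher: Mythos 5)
Your overall strategy is exactly the paper's: extend $w$ by zero, use Plancherel and the convolution theorem to rewrite the quadratic form as $\int_{\mathbb{R}}\Re(\mathcal{F}k)(\omega)\,\|\mathcal{F}w(\omega)\|_X^2\,\textup{d}\omega$ (the imaginary part dropping out because the left-hand side is real), insert the pointwise lower bound on $\Re(\mathcal{F}k)$, and identify the resulting weighted integral with the negative-order norm; the paper additionally first works with $w\in L^2(0,T;X)$ (and a damped kernel $k(t)e^{-\epsilon t}$ to justify the Fourier computation) and then passes to general $w\in H^{-\delta/2}$ by density, a limiting step you only gesture at with ``regular enough''.

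The one place where your write-up goes wrong is precisely the step you flagged as delicate, namely the norm comparison for the zero extension $E_0w$. The Fourier computation bounds the quadratic form from below by $\gamma\|E_0w\|_{H^{-\delta/2}(\mathbb{R};X)}^2$, so to conclude \eqref{eqn:coercivityI} you need $\|E_0w\|_{H^{-\delta/2}(\mathbb{R};X)}\geq\|w\|_{H^{-\delta/2}(0,T;X)}$ --- but your final sentence asserts the opposite inequality $\|E_0w\|\leq\|w\|$ and calls it ``the direction we need'' (it also contradicts the inequality you state two sentences earlier). The needed direction does hold if $H^{-\delta/2}(0,T;X)$ is normed as the dual of $H_0^{\delta/2}(0,T;X)$ with test functions measured through their zero extensions, since every admissible test function on $(0,T)$ then also tests $E_0w$ on $\mathbb{R}$ with no larger norm; the paper in effect bypasses the issue by simply reading $\|w\|^2_{H^{-\delta/2}(0,T;X)}$ as $\int_{\mathbb{R}}(1+\omega^2)^{-\delta/2}\|\mathcal{F}w(\omega)\|_X^2\,\textup{d}\omega$ for the zero-extended $w$. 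If instead one insists on the restriction/quotient-norm convention you mention, the argument yields the estimate only up to a norm-equivalence constant, not with the constant $\gamma$ as stated. So: right method, same as the paper's, but fix the direction of that inequality (or adopt the paper's identification of the norm) and add the $L^2$-then-density step so that the pairing with $k*w$ is meaningful for $w$ merely in $H^{-\delta/2}$.
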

\begin{proof}
We first assume $w\in L^2(0,T;X)$ and approximate 
$k*w$  by $k_\epsilon*w$ with $k_\epsilon(t)=k_\epsilon(t) e^{-\epsilon t}$ 
for $\epsilon>0$ and extend $w\in L^2(0,T)$ to all of $\mathbb{R}$ by zero.
Plancherel's Theorem and the Convolution Theorem then yield
\[
\begin{aligned}
\int_0^t  \langle w(s), (k_\epsilon* w)(s)\rangle \ds 
&=\int_\mathbb{R} \langle\mathcal{F}w(\omega), \overline{\mathcal{F}[k_\epsilon*w]}(\omega)\rangle\, \textup{d}\omega\\
&=\int_\mathbb{R} \|\mathcal{F}w(\omega)\|^2 \overline{\mathcal{F}k_\epsilon}
(\omega)\, \textup{d}\omega\,.
\end{aligned}
\]
Here both sides have to be real valued, because obviously the left hand side is, and therefore
\[
\begin{aligned}
\int_0^t  \langle w(s), (k_\epsilon* w)(s)\rangle \ds 
=\int_\mathbb{R} \|\mathcal{F}w(\omega)\|^2 \Re(\mathcal{F}k_\epsilon)(\omega)\, \textup{d}\omega
\end{aligned}
\]
Letting $\epsilon$ tend to zero and recalling that 
$\| w \|_{H^{-\delta/2}(0,x)}^2=\int_\mathbb{R} |\mathcal{F}w(\omega)|^2 (1+\omega^2)^{-\delta/2}\, \textup{d}\omega$, we can conclude the inequality \eqref{eqn:coercivityI} for $w\in L^2(0,T;X)$.
Now the assertion follows for general $w\in H^{-\delta/2}(0,T;X)$ by density.
\end{proof}

\end{appendices} 
\section*{Acknowledgement}
\sloppy The authors UK and BW gratefully acknowledge the support of the Deutsche
Forschungsgemeinschaft (DFG) within the project \mbox{WO~671/11-1}.
The work of BK was supported by the Austrian Science Fund FWF under the grants P30054 and DOC 78. MLR acknowledges the support by the Laura Bassi Postdoctoral Fellowship (Technical University of Munich).

\end{document}